\newcommand{\F}{\mathbb F}
\newcommand{\thet}{\theta_j(f, \psi)}
\newcommand{\alp}{\alpha_j(f, \psi)}
\DeclareMathOperator{\tr}{tr}
\newtheorem{lem}{Lemma}[section]
\newtheorem{prop}[lem]{Proposition}
\newtheorem{thm}[lem]{Theorem}
\newtheorem{cor}[lem]{Corollary}
\theoremstyle{definition}
\newtheorem{rem}[lem]{Remark}
\date\today
\title{Distribution of zeta zeroes of Artin--Schreier covers}
\author{Alina Bucur, Chantal David, Brooke Feigon, Matilde Lal\'{i}n, Kaneenika Sinha}
\address{Alina Bucur: Department of Mathematics, University of California at San Diego, 
9500 Gilman Drive $\#$0112, La Jolla, CA 92093, USA} \email{alina@math.ucsd.edu}
\address{Chantal David: Department of Mathematics and Statistics,                                                                                 
Concordia University,                                                                                                                       
1455 de Maisonneuve West,                                                                                                                   
Montreal, QC H3G 1M8, Canada} \email{cdavid@mathstat.concordia.ca}
 \address{Brooke Feigon: Department of Mathematics,
The City College of New York, 
CUNY, 
NAC 8/133,
New York, NY 10031, USA} \email{bfeigon@ccny.cuny.edu }
 \address{Matilde Lal\'in: D\'epartement de math\'ematiques et de statistique,
                                    Universit\'e de Montr\'eal.
                                    CP 6128, succ. Centre-ville.
                                     Montreal, QC H3C 3J7, Canada} \email{mlalin@dms.umontreal.ca}
\address{Kaneenika Sinha:
Department of Mathematics, Indian Institute of Science Education and Research (IISER), first floor, Central Tower, Sai Trinity Building, Garware Circle, Sutarwadi, Pashan, Pune, Maharashtra 411021, India}\email{kaneenika@gmail.com}
\begin{document}

\begin{abstract}
We study the distribution of the zeroes of the zeta functions of the family of Artin-Schreier covers of the projective line over $\mathbb{F}_q$ when $q$ is fixed and the genus goes to infinity. We consider both the global and the mesoscopic regimes, proving that when the genus goes to infinity, the number of zeroes with angles in a prescribed non-trivial subinterval of $[-\pi,\pi)$ has a standard Gaussian distribution (when properly normalized).
\end{abstract}

\keywords{Artin-Schreier covers, finite fields, distribution of zeroes of $L$-functions of curves.}

\subjclass[2010]{Primary 11G20; Secondary 11M50, 14G15}

\maketitle

\section{Introduction}
Recently there has been a great deal of interest in statistics for numbers of rational points on 
curves over finite fields, where the curve varies in a certain family but is always defined over 
a fixed finite field. This is in contrast to situations studied using Deligne's equidistribution theorem \cite{deligne, deligne2}, which requires the size of the finite field to go to infinity, and which tends to produce
statistics related to random matrices in certain monodromy groups. When one fixes the base field, one instead tends to encounter discrete probabilities, typically sums of 
independent identically distributed random variables. The first result in this direction
is the work of Kurlberg and Rudnick for hyperelliptic curves \cite{kr};
other cases considered include cyclic $p$-fold covers of the projective line \cite{bdfl1, bdfl2} (for a slightly different approach see \cite{x2}),
plane curves \cite{bdfl3}, complete intersections in projective spaces \cite{bk},
and general trigonal curves \cite{wood}.

The number of rational points on a curve over a finite field is determined by the zeta function,
and statistical properties of the number of points may be interpreted as properties of the
coefficients of the zeta function. 
A related but somewhat deeper question is to consider statistical properties of \textit{zeroes}
of the zeta function. In the case of hyperelliptic curves, these properties were studied by
Faifman and Rudnick \cite{fr}. A related family was studied in \cite{x1}. 

In this paper, we make similar considerations for
the family of \emph{Artin-Schreier covers of $\mathbb P^1$}; this family is interesting because the
characteristic of the base field plays a more central role in the definition than in any of the other
families mentioned so far. The Artin-Schreier construction is special because it cannot be obtained by base-change from a family of schemes over $\mathbb Z.$ Since Artin-Schreier covers are cyclic covers of $\mathbb{P}^1$, one obtains a direct link between their zeta functions and certain exponential sums; while this is also the case for cyclic $p$-fold covers in characteristics other than $p$, the Artin-Schreier case admits a much more precise analysis. One example of how to exploit this additional precision is the work of Rojas-Leon and Wan \cite{rw} refining the Weil bound for Artin-Schreier curves.

To explain our results in more detail, we introduce some notation.
Fix an odd prime $p$ and a finite field $\mathbb F_q$ of characteristic $p.$ Each polynomial $f \in \mathbb F_q[X]$ whose degree $d$ is not divisible by $p$ defines an Artin-Schreier cover $C_f$ of $\mathbb P^1$ with affine model
\begin{equation}\label{ascurve} Y^p-Y=f(X).\end{equation}
Since $f$ is a polynomial rather than a more general rational function, $C_f$ has $p$-rank $0.$ For more details about the structure of the moduli space of Artin-Schreier curves and its $p$-rank strata, see \cite{pz}. The Riemann-Hurwitz formula implies that the genus of the above curve is $g= (d-1)(p-1)/2.$ As usual, the Weil zeta function of $C_f$ has the form

\[Z_{C_f} (u) = \frac{P_{C_f}(u)}{(1-u)(1-qu)}.\] Here $P_{C_f}(u)$ is a polynomial of degree $2g=(d -1)(p-1)$ which factors as

\begin{equation}\label{product}P_{C_f}(u) = \prod_{\psi \neq 1} L(u, f, \psi),\end{equation}
where the product is taken over the non-trivial additive characters $\psi$ of $\mathbb F_p$ and $L(u,f,\psi)$ are certain $L$-functions (see \eqref{Euler-product} for the formula).
Computing the distribution of the zeroes of the zeta functions $Z_{C_f}(u)$ as $C_f$ runs over the $\mathbb{F}_q$-points of the moduli space $\mathcal{AS}_{g,0}$
of Artin-Schreier covers of genus $g$ and $p$-rank $0$ amounts to computing the distribution of  the zeroes
of $\prod_{j=1}^{p-1}L(u,f,\psi^j)$ for a fixed non-trivial additive character $\psi$ as $f$ runs over polynomials of degree $d$. In fact, going over each $\F_q$-point of the moduli space $\mathcal{AS}_{g,0}$ once is equivalent to letting $f$ vary over the set $\mathcal{F}'_d$ of polynomials of degree
$d$ containing no non-constant terms of degree divisible by $p$, as such terms can always be eliminated
in a unique way without changing the resulting Artin-Schreier cover.

Some statistics for the zeroes in the family of  Artin-Schreier covers were considered in the
recent work of Entin \cite{entin}, who employs the methods of Kurlberg and Rudnick \cite{kr} to study the variation of the number of points on
such a family, then translates the results into information about zeroes.
In the present work, we consider the global and mesoscopic regime, as  was done by Faifman and Rudnick \cite{fr} for the
family of hyperelliptic curves.

More precisely, we write \begin{equation}\label{factorization-of-L}
 L(u,f,\psi) = \prod_{j=1}^{d-1} (1-\alpha_{j}(f, \psi)u),
\end{equation}
\noindent where $\alpha_j(f, \psi) = \sqrt{q} e^{2 \pi i \theta_j(f, \psi)}$ and $\theta_j(f, \psi) \in [-1/2, 1/2)$.
We study the statistics of the set of angles $\{\theta_j(f, \psi)\}$ as $f$ varies. For an interval $\mathcal{I} \subset[-1/2,1/2),$ let
\[N_\mathcal{I}(f, \psi) := \#\{1\leq j \leq d-1:\,\theta_j(f, \psi) \in \mathcal{I}\},\]
\[
N_{\mathcal{I}}(f,\psi, \bar\psi):=N_\mathcal{I}(f, \psi)+ N_\mathcal{I}(f,\bar \psi),
\]
and 
\[
N_\mathcal{I}(C_f):=\sum_{j=1}^{p-1}N_\mathcal{I}(f,\psi^j).
\]
We show that the number of zeroes with angle in a prescribed non-trivial subinterval $\mathcal I$
is asymptotic to $2g|\mathcal I|$ (Theorem \ref{boundN_I}), has variance asymptotic to $\frac{2(p-1)}{\pi^2} \log(g|\mathcal I|)$ and properly normalized has a Gaussian distribution.

\begin{thm}\label{mainthm} Fix a finite field $\F_q$ of characteristic $p$. Let $\mathcal{F}_d'$ be the family of polynomials defined in \eqref{ourfamily}. Then for any real numbers $a<b$  and  $0<|\mathcal{I}|<1$ either  fixed or
$|\mathcal{I}|\rightarrow 0$ while $d |\mathcal{I}|\rightarrow \infty$,
\[\lim_{d \rightarrow \infty} \mathrm{Prob}_{\mathcal{F}_d'}\left(a < \frac{N_\mathcal{I}(C_f)-(d-1)(p-1)|\mathcal{I}|}{\sqrt{\frac{2(p-1)}{\pi^2}\log(d|\mathcal{I}|)}} < b\right)=\frac{1}{\sqrt{2\pi}} \int_a^b e^{-x^2/2} dx.\]
\end{thm}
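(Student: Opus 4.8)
The plan is to reduce the problem to a sum of independent random variables via an explicit formula, then apply a central limit theorem. First I would use the standard Beurling–Selberg trick: approximate the indicator function of the interval $\mathcal{I}$ (periodized to $\mathbb{R}/\mathbb{Z}$) by trigonometric polynomials $F^{\pm}(\theta) = \sum_{|k| \le K} \widehat{F}^{\pm}(k) e^{2\pi i k \theta}$ that bound it from above and below, with $K$ a parameter to be optimized (something like $K \asymp d$ or a power of $d$). Applied to the angles $\theta_j(f,\psi^\ell)$, this turns $N_{\mathcal{I}}(C_f)$ into a main term $(d-1)(p-1)|\mathcal{I}|$ plus a sum over $1 \le |k| \le K$ and $1 \le \ell \le p-1$ of $\widehat{F}(k) \sum_j e^{2\pi i k \theta_j(f,\psi^\ell)}$. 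The key identity is that $\sum_{j=1}^{d-1} e^{2\pi i k \theta_j(f,\psi^\ell)} = q^{-k/2} \sum_{j} \alpha_j(f,\psi^\ell)^k = -q^{-k/2} \operatorname{tr}(\Theta_{f,\psi^\ell}^k)$ in terms of the power sums of the inverse roots, which by the Euler product \eqref{Euler-product} (the reference pointed to in the excerpt) is expressible as an exponential sum: $\operatorname{tr}(\Theta_{f,\psi^\ell}^k) = -q^{-k/2}\sum_{x \in \mathbb{F}_{q^k}} \psi^\ell(\operatorname{Tr}_{\mathbb{F}_{q^k}/\mathbb{F}_p} f(x))$, up to lower-order boundary corrections involving the point at infinity.

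The heart of the argument is then a moment computation. For fixed $k$, as $f$ ranges uniformly over $\mathcal{F}_d'$, the normalized exponential sums $P_k(f) := q^{-k/2}\sum_{x \in \mathbb{F}_{q^k}} \psi(\operatorname{Tr} f(x))$ should behave, in the limit $d \to \infty$, like independent random variables — concretely, for a random polynomial $f$ the values $f(x)$ at the relevant points of $\mathbb{F}_{q^k}$ become equidistributed and independent once $d$ is large enough relative to $k$, so $P_k(f)$ converges in distribution to $\frac{1}{\sqrt{q^k}}\sum_{x} \psi(\text{i.i.d.\ uniform})$, and these are asymptotically independent across $k$ (and one checks the interaction with the conjugate character $\bar\psi$ and the powers $\psi^\ell$). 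I would make this precise by computing joint moments $\mathbb{E}_{\mathcal{F}_d'}\big[\prod P_{k_i}(f)^{a_i}\overline{P_{k_i}(f)}^{b_i}\big]$ and showing they match the Gaussian/independent-sum moments up to an error that vanishes as $d \to \infty$; the combinatorics here is exactly the count of solutions to $\operatorname{Tr} f$-type equations, controlled by orthogonality of characters over $\mathbb{F}_q$ applied coefficient-by-coefficient to $f$. Summing $\widehat{F}(k) P_k$ against the Fourier coefficients $\widehat{F}(k) \sim \frac{1}{\pi k}$ of (the periodization of) $\mathbf{1}_{\mathcal{I}}$, the variance of $N_{\mathcal{I}}(C_f)$ becomes $(p-1)\sum_{k \ge 1} \frac{2}{\pi^2 k^2}\cdot k \cdot \mathbb{E}|P_k|^2$-type contributions — but crucially $\mathbb{E}|P_k|^2 \to 1$ only in a range, and the truncation at $K$ and the genuine variance of $P_k$ produce the logarithm: $\operatorname{Var} \sim \frac{2(p-1)}{\pi^2}\log(d|\mathcal{I}|)$, matching Theorem \ref{boundN_I}'s companion variance statement quoted in the text.

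With the variance pinned down and the approximate decomposition $N_{\mathcal{I}}(C_f) - \text{main term} = \sum_{\ell,k} \widehat{F}(k) P_{k,\ell}(f) + (\text{error})$ in hand, the CLT follows by the method of moments: I would compute the $m$-th moment of the normalized statistic, show the odd moments vanish asymptotically and the even moments converge to $(m-1)!!$, using the asymptotic independence and the Gaussian limit of each $P_k$ established above, plus the fact that the dominant contribution comes from a growing-but-controlled range of $k$ (a Lindeberg-type condition: no single frequency dominates because $\widehat{F}(k)^2/\operatorname{Var} \to 0$). The main obstacle I anticipate is \emph{uniformity in the two regimes simultaneously} — when $|\mathcal{I}|$ is fixed versus when $|\mathcal{I}| \to 0$ with $d|\mathcal{I}| \to \infty$ — because the optimal cutoff $K$, the quality of the Beurling–Selberg approximation (whose error is $O(1/K)$ per angle, hence $O((d-1)(p-1)/K)$ total, which must be $o(\sqrt{\log(d|\mathcal{I}|)})$), and the range in which $\mathbb{E}|P_k|^2 \approx 1$ all depend delicately on how $|\mathcal{I}|$ scales with $d$; getting the error terms to be genuinely negligible against the $\sqrt{\log(d|\mathcal{I}|)}$ normalization in the shrinking-interval case is where the real work lies, and it is presumably where the hypothesis $d|\mathcal{I}| \to \infty$ (rather than merely $|\mathcal{I}| > 0$) gets used.
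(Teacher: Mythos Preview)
Your proposal is correct and follows essentially the same route as the paper: Beurling--Selberg majorants/minorants plus the explicit formula reduce $N_{\mathcal{I}}(C_f)-(d-1)(p-1)|\mathcal{I}|$ to the auxiliary sums $S^\pm(K,C_f)$ built from $q^{-k/2}S_k(f,\psi^\ell)$, whose joint moments over $\mathcal{F}_d'$ are computed by character orthogonality and matched to Gaussian moments, with $K=d/\log\log(d|\mathcal{I}|)$ and a final mean-square comparison transferring the CLT to $N_{\mathcal{I}}$. Two small corrections: there are no boundary terms at infinity (the local factor there is trivial for $\psi\neq 1$), and the covariance of your $P_k$ with $\overline{P_k}$ is asymptotically $k$, not $1$---it is exactly this factor that makes $\sum_{k\le K}\widehat{I}_K^\pm(k)^2\,k \sim \frac{1}{2\pi^2}\log(K|\mathcal{I}|)$ produce the logarithm.
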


As noted earlier, this result can also be stated in terms of the $\mathbb{F}_q$-points
of $\mathcal{AS}_{g,0}$. 

\begin{cor}\label{cor}  Fix a finite field $\F_q$ of characteristic $p$. Then for any real numbers $a<b$ and  $0<|\mathcal{I}|<1$ either  fixed or
$|\mathcal{I}|\rightarrow 0$ while $g |\mathcal{I}|\rightarrow \infty$,
\[\lim_{g \rightarrow \infty} \mathrm{Prob}_{\mathcal{AS}_{g,0}(\mathbb{F}_q)}\left(a < \frac{N_\mathcal{I}(C_f)-2g|\mathcal{I}|}{\sqrt{\frac{2(p-1)}{\pi^2}\log\left(g|\mathcal{I}|\right)}} < b\right)=\frac{1}{\sqrt{2\pi}} \int_a^b e^{-x^2/2} dx.\]
\end{cor}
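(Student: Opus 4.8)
The plan is to deduce Corollary \ref{cor} directly from Theorem \ref{mainthm} using the change of parametrization recorded in the introduction: running once over the $\mathbb{F}_q$-points of $\mathcal{AS}_{g,0}$ is the same as letting $f$ range over the polynomials of degree $d$ with no non-constant monomial of degree divisible by $p$, where $d$ and $g$ are linked by the Riemann--Hurwitz relation $g=(d-1)(p-1)/2$. Under this identification the two uniform probability measures $\mathrm{Prob}_{\mathcal{AS}_{g,0}(\mathbb{F}_q)}$ and $\mathrm{Prob}_{\mathcal{F}_d'}$ coincide, $N_\mathcal{I}(C_f)$ is literally the same random variable on both sides, and (since $p$ is fixed) the limit $g\to\infty$ is the limit $d\to\infty$. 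So the only thing to verify is that the centering and scaling in the two statements agree asymptotically.

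First I would observe that the centering is already identical: $2g=(d-1)(p-1)$, so $2g|\mathcal{I}|$ is exactly the quantity $(d-1)(p-1)|\mathcal{I}|$ subtracted in Theorem \ref{mainthm}. For the scaling, note that $g/d=(d-1)(p-1)/(2d)\to (p-1)/2$, hence $\log(g|\mathcal{I}|)=\log(d|\mathcal{I}|)+\log(g/d)=\log(d|\mathcal{I}|)+O(1)$ with an implied constant independent of $|\mathcal{I}|$; moreover, because $g$ and $d$ differ only by the bounded factor $g/d$, the hypothesis $g|\mathcal{I}|\to\infty$ of the Corollary is equivalent to the hypothesis $d|\mathcal{I}|\to\infty$ of the Theorem (and the ``$|\mathcal{I}|$ fixed'' cases match trivially). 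Consequently both $\log(g|\mathcal{I}|)$ and $\log(d|\mathcal{I}|)$ tend to infinity and their ratio tends to $1$, so the normalizing denominators $\sqrt{\tfrac{2(p-1)}{\pi^2}\log(g|\mathcal{I}|)}$ and $\sqrt{\tfrac{2(p-1)}{\pi^2}\log(d|\mathcal{I}|)}$ are asymptotically equal.

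To finish, I would invoke a Slutsky-type argument. Let $W_d$ denote the normalized statistic appearing inside the probability in Theorem \ref{mainthm}, and set $c_d:=\sqrt{\log(d|\mathcal{I}|)/\log(g|\mathcal{I}|)}$, so that the statistic inside the probability in Corollary \ref{cor} is exactly $c_dW_d$. Since $W_d$ converges in distribution to a standard Gaussian by Theorem \ref{mainthm} and $c_d\to 1$, the product $c_dW_d$ also converges in distribution to a standard Gaussian; as the Gaussian distribution function is continuous at the fixed points $a$ and $b$, we conclude $\mathrm{Prob}(a<c_dW_d<b)\to\frac{1}{\sqrt{2\pi}}\int_a^b e^{-x^2/2}\,dx$, which is the assertion of the Corollary. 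I do not expect any genuine obstacle: all the substance sits in Theorem \ref{mainthm}, and the only mild subtlety is that $|\mathcal{I}|$ may vary with $d$, so one must make sure the comparison $\log(g|\mathcal{I}|)\sim\log(d|\mathcal{I}|)$ holds with an error bounded uniformly in $|\mathcal{I}|$ --- which it does, since the discrepancy $\log(g/d)$ does not involve $|\mathcal{I}|$ at all.
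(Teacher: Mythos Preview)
Your proposal is correct and follows exactly the approach the paper intends: the paper does not give a separate proof of Corollary~\ref{cor} at all, but simply remarks before stating it that Theorem~\ref{mainthm} ``can also be stated in terms of the $\mathbb{F}_q$-points of $\mathcal{AS}_{g,0}$,'' relying on the one-to-one correspondence between $\mathcal{AS}_{g,0}(\mathbb{F}_q)$ and $\mathcal{F}_d'$ and the relation $2g=(d-1)(p-1)$. Your careful handling of the discrepancy between $\log(g|\mathcal{I}|)$ and $\log(d|\mathcal{I}|)$ via Slutsky is more explicit than anything the paper writes down, but it is precisely the implicit content of that one-line remark.
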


Theorem \ref{mainthm} is obtained by computing the
normalized moments of certain approximations of 
\\${N_\mathcal{I}(C_f)-(p-1)(d-1)|\mathcal{I}|}$ given by Beurling-Selberg polynomials
to verify that they fit the Gaussian moments.
Our results are compatible with the following result for the distribution of zeroes of the $L$-functions $L(u,f, \psi)$ and $L(u, f, \bar \psi).$ 
\begin{prop}\label{prop}
 Fix a finite field $\F_q$ of characteristic $p$. Then for any real numbers $a<b$  and  $0<|\mathcal{I}|<1$ either  fixed or
$|\mathcal{I}|\rightarrow 0$ while $d |\mathcal{I}|\rightarrow \infty$,
 \[\lim_{d \rightarrow \infty} \mathrm{Prob}_{\mathcal{F}_d'}\left(a < \frac{N_{\mathcal{I}}(f,\psi, \bar\psi)-2(d-1)|\mathcal{I}|}{\sqrt{\frac{4}{\pi^2}\log(d|\mathcal{I}|)}} < b\right)=\frac{1}{\sqrt{2\pi}} \int_a^b e^{-x^2/2} dx.\]
\end{prop}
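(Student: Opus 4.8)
The plan is to combine an explicit formula for $N_{\mathcal{I}}(f,\psi,\bar\psi)$ with the method of moments, following Faifman and Rudnick's treatment of hyperelliptic curves. Comparing the point counts $\#C_f(\F_{q^n})$ with the factorization \eqref{factorization-of-L} gives $\sum_{j=1}^{d-1}\alpha_j(f,\psi)^n=-S_n(f,\psi)$, where $S_n(f,\psi)=\sum_{x\in\F_{q^n}}\psi\big(\tr_{\F_{q^n}/\F_p}f(x)\big)$; since the multiset $\{\overline{\alpha_j(f,\psi)}\}$ equals $\{\alpha_j(f,\bar\psi)\}$, expanding $\mathbf 1_{\mathcal{I}}$ in a Fourier series and summing over the zeros of $L(u,f,\psi)$ and of $L(u,f,\bar\psi)$ produces the formal identity
\[
N_{\mathcal{I}}(f,\psi,\bar\psi)=2(d-1)|\mathcal{I}|-\sum_{k\ge 1}a_k\,q^{-k/2}\big(S_k(f,\psi)+S_k(f,\bar\psi)\big),\qquad a_k=\widehat{\mathbf 1_{\mathcal{I}}}(k)+\widehat{\mathbf 1_{\mathcal{I}}}(-k).
\]
As this series is not absolutely convergent, I would replace $\mathbf 1_{\mathcal{I}}$ by its Beurling--Selberg majorant and minorant $I^{\pm}_{K}$ of degree $K$, so that $N^{-}_{\mathcal{I},K}\le N_{\mathcal{I}}(f,\psi,\bar\psi)\le N^{+}_{\mathcal{I},K}$, where $N^{\pm}_{\mathcal{I},K}$ is given by the truncated identity above (coefficients within $O(1/K)$ of the $a_k$, constant term $|\mathcal{I}|\pm\tfrac1{K+1}$), whence $\mathbb{E}\big[N^{+}_{\mathcal{I},K}-N^{-}_{\mathcal{I},K}\big]=O(d/K)$ and $\mathrm{Var}\big(N^{+}_{\mathcal{I},K}-N^{-}_{\mathcal{I},K}\big)=O(1)$.

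The arithmetic heart is the behaviour of the joint moments of $S_k(f,\psi)$ and $S_k(f,\bar\psi)$ as $f$ runs uniformly over $\mathcal{F}'_d$. Because $\tr_{\F_{q^n}/\F_p}f(x)=\sum_i\tr_{\F_q/\F_p}\!\big(c_i\,\tr_{\F_{q^n}/\F_q}(x^i)\big)$ is linear in the coefficients $(c_i)$ of $f$, for $x_r\in\F_{q^{k_r}}$ and signs $\varepsilon_r\in\{\pm1\}$ the average $\mathbb{E}_f\big[\prod_r\psi\big(\varepsilon_r\tr_{\F_{q^{k_r}}/\F_p}f(x_r)\big)\big]$ equals $1$ or $0$ according as $\sum_r\varepsilon_r\tr_{\F_{q^{k_r}}/\F_q}(x_r^{\,i})=0$ for all admissible $i\le d$ or not (the condition $c_d\ne 0$ only perturbs this by $O(1/q)$). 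Using that the $p$-th power map commutes with $\tr_{\F_{q^n}/\F_q}$ and that $\big(\sum_r\varepsilon_rb_r\big)^p=\sum_r\varepsilon_rb_r^p$ in characteristic $p$, the condition for $i$ propagates to $pi$, so when $d$ dominates the $k_r$ it becomes the statement that the signed union of the Galois orbit-multisets $A_r=\{x_r^{q^a}:0\le a<k_r\}$ has vanishing power sums, equivalently (Vandermonde) $\sum_r\varepsilon_r\operatorname{mult}_{A_r}(\gamma)\equiv 0\pmod p$ for all $\gamma\ne0$. The dominant solutions are the diagonal ones, in which the $x_r$ are matched in pairs with equal level $k_r$, opposite signs, and $x_{r'}$ a Galois conjugate of $x_r$; a single such pair on level $k$ is counted $\sim k\,q^{k}$ times. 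In particular
\[
\mathbb{E}_f\Big[\Big|\sum_{j=1}^{d-1}e^{2\pi i k\theta_j(f,\psi)}\Big|^2\Big]=q^{-k}\,\#\big\{(x,y)\in\F_{q^k}^2:\tr_{\F_{q^k}/\F_q}(x^i)=\tr_{\F_{q^k}/\F_q}(y^i)\ \forall i\big\}=k+o(1),
\]
so the power sums of the Frobenius angles behave, for $k$ in the relevant range, like traces of powers of a $(d-1)\times(d-1)$ CUE matrix. All off-diagonal configurations, all contributions of non-generic $x_r$ (of $\F_q$-degree $e<k$, or $x_r=0$), and all cross-level ($k\ne l$) covariances are smaller by a positive power of $q$ once $p\ge3$, and should be bounded uniformly.

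Granting these inputs, the proof concludes as follows. Taking expectations annihilates the $k\ge1$ terms up to $O(1)$, so $\mathbb{E}[N_{\mathcal{I}}(f,\psi,\bar\psi)]=2(d-1)|\mathcal{I}|+O(1)$. The diagonal part of the second moment gives
\[
\mathrm{Var}\big(N_{\mathcal{I}}(f,\psi,\bar\psi)\big)=2\sum_{1\le k\le K}k\,a_k^2+O(1)=\frac{8}{\pi^2}\sum_{1\le k\le K}\frac{\sin^2(\pi k|\mathcal{I}|)}{k}\,(1+o(1))\sim\frac{4}{\pi^2}\log\big(d|\mathcal{I}|\big),
\]
using $\sum_{k\le K}\sin^2(\pi k|\mathcal{I}|)/k=\tfrac12\log(K|\mathcal{I}|)+O(1)$ and the choice $K\asymp d$; this is exactly why the hypothesis $d|\mathcal{I}|\to\infty$ forces the variance to infinity and simultaneously makes $\mathbb{E}[N^{+}_{\mathcal{I},K}-N^{-}_{\mathcal{I},K}]=O(1)$ negligible beside $\sqrt{\log(d|\mathcal{I}|)}$. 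For the $m$-th centred moment one expands the truncated sum and applies the moment estimates: the configurations of the top size $V^{m/2}$ (where $V$ is the variance) are exactly the perfect pairings of the $m$ indices, contributing $(m-1)!!\,V^{m/2}$ when $m$ is even and $o(V^{m/2})$ when $m$ is odd. Hence the normalized quantity $\big(N_{\mathcal{I}}(f,\psi,\bar\psi)-2(d-1)|\mathcal{I}|\big)\big/\sqrt{\tfrac4{\pi^2}\log(d|\mathcal{I}|)}$ has all moments converging to those of the standard Gaussian; the same is true of $N^{\pm}_{\mathcal{I},K}$, and since their difference has mean and variance $O(1)$ the squeeze transfers the limit to $N_{\mathcal{I}}(f,\psi,\bar\psi)$.

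The step I expect to be the main obstacle is making the exponential-sum moment estimates uniform over the whole range $1\le k\le K$ with $K$ a constant multiple of $d$: once $k$ is comparable with $d$, the set of conditions ``$p\nmid i\le d$'' is no longer large relative to $k$, the characteristic-$p$ obstruction to recovering a polynomial from its Newton power sums becomes genuine, and one must show separately --- e.g.\ via the trivial bound $\big|\sum_j e^{2\pi i k\theta_j(f,\psi)}\big|\le d-1$ on this short terminal range together with the sharp count on the bulk --- that these $k$ contribute only $O(1)$ to the variance and to the higher moments. A subsidiary but necessary task is the bookkeeping of the off-diagonal terms (triples when $p=3$, orbits of non-maximal degree, the $c_d\ne0$ correction, the cross-level covariances) so that all error terms sum to $o(\log(d|\mathcal{I}|))$, and likewise controlling the discrepancy introduced by replacing $\mathbf 1_{\mathcal{I}}$ with $I^{\pm}_{K}$ inside the moment computations.
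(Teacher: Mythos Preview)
Your outline is essentially the paper's proof: Beurling--Selberg majorant/minorant, explicit formula, moment method on $S^\pm(K,f,\psi)+S^\pm(K,f,\bar\psi)=2S^\pm(K,f,\psi)$, Gaussian moments from diagonal pairings, then a mean-square squeeze to pass from $N^\pm_{\mathcal I,K}$ to $N_{\mathcal I}(f,\psi,\bar\psi)$. The variance computation and the $(m-1)!!$ combinatorics match Theorem~\ref{thm:Smoments} and the argument of Section~\ref{proof}.

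Two points of comparison are worth noting. First, where you characterise the vanishing of $\mathbb E_f\big[\prod_r\psi(\varepsilon_r\tr f(x_r))\big]$ via Newton power sums and a Vandermonde/multiplicity condition, the paper instead uses the Chinese remainder map $\tau\colon\mathcal F_d\to\F_q[X]/(g_1\cdots g_s)$ directly (Lemma~\ref{generalcase}): once $\sum u_i\le\sum k_i<d$ this map is \emph{exactly} $(q-1)q^{d-\sum u_i}$-to-one, so the average is $1$ or $0$ with no $O(1/q)$ correction from the condition $c_d\ne 0$, and the criterion is simply $p\mid\epsilon_i$ for each distinct minimal polynomial $g_i$. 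Your power-sum formulation is equivalent, but the CRT route makes the uniformity transparent and avoids the characteristic-$p$ ambiguity you flag in recovering polynomials from power sums.

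Second, and relatedly, your anticipated ``main obstacle'' --- losing control when $k$ is comparable to $d$ --- is dissolved not by splitting off a terminal range and using the trivial bound, but by the choice of truncation: the paper takes $K=d/\log\log(d|\mathcal I|)$ (Theorem~\ref{thm:sumisgaussian}). Then $\log(K|\mathcal I|)/\log(d|\mathcal I|)\to 1$, so the variance is unchanged asymptotically, while for each fixed moment order $n$ one eventually has $k_1+\cdots+k_n\le nK<d$, and Lemma~\ref{generalcase} applies on the nose for every term. This single choice of $K$ handles all moments simultaneously and makes the mean-square comparison $\langle(N_{\mathcal I}-2(d-1)|\mathcal I|+S^\pm)^2\rangle=O\big((d/K)^2\big)=o(\log(d|\mathcal I|))$ automatic.
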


\begin{rem}
Analogous results hold for $N_{\mathcal I}(f,\psi)$ as long as the interval $\mathcal I$ is symmetric. 
\end{rem}

Notice that Proposition \ref{prop} is compatible with the philosophy of Katz and Sarnak, which predicts that when
$q \rightarrow \infty$, the distribution of $N_{\mathcal{I}}(C_f)$ is the same as the distribution of
$\hat{N}_{\mathcal{I}}(U)$, the number of eigenvalues of a $2g \times 2g$ matrix $U$ in the monodromy group
of $C_f$ chosen uniformly at random with respect to the Haar measure. The monodromy groups of Artin-Schreier covers are computed by Katz in
\cite{Katz1, Katz2}. In the large matrix limit, which corresponds to the limit as
$d \rightarrow \infty$ for the family of Artin-Schreier covers because $g = (p-1)(d-1)/2$, the statistics
on $\hat{N}_{\mathcal{I}}(U)$ have been found to have Gaussian fluctuations in various ensembles of
random matrices. 

\subsection{Outline of the article}
This article is set up as follows. We begin by reviewing basic Artin-Schreier theory in Section \ref{a-s}. In Section \ref{explicit} we prove two explicit formulas for the zeroes of $L(u, f, \psi)$ which we will need later to compute the moments. In Section \ref{distrzeros} we prove a result about the number of zeroes of the zeta function for a fixed Artin-Schreier cover of $\mathbb P^1$.  In Section \ref{trigapprox} we recall some facts on Beurling-Selberg polynomials and use them to prove some technical statements about their coefficients. A certain sum of these trigonometric polynomials approximate the characteristic function of the interval $\mathcal I$. 
We use the explicit formula to reduce the problem of studying this sum of Beurling-Selberg polynomials  to a problem about sums of characters of traces of a polynomial $f$ evaluated at elements in extensions of $\mathbb F_q$. In Sections \ref{1mom}, \ref{2mom} and \ref{3mom} we analyze the first, second and third 
moments of this sum. These moments tell us the expectation and
variance of the  distribution.  In Section \ref{genmom} we compute the general moments of our approximating function and conclude that it has a standard Gaussian limiting distribution as the degree $d$ of $f$ goes to infinity for $\mathcal I$ either fixed or in the mesoscopic regime. Finally, in Section \ref{proof} we conclude the proof of Theorem \ref{mainthm} by proving that under normalization $N_{\mathcal I}(C_f)-(d-1)(p-1)|\mathcal I|$ converges in mean square and hence distribution to our approximating function.

\section{Basic Artin-Schreier theory}\label{a-s}

We now recall some more facts about Artin-Schreier covers. For each integer $n\geq 1$, denote by $\tr_n: \mathbb F_{q^n} \to \mathbb F_p$ the absolute trace map (not the trace to $\F_q$). For each polynomial $g \in \mathbb F_q[X]$ and  non-trivial additive character $\psi$ of $\F_p$, set
\[
S_n(g, \psi) = \sum_{x\in \mathbb F_{q^n}} \psi(\tr_n(g(x))).
\]

The $L$-functions that appear in \eqref{product} are given by
\begin{equation}\label{Euler-product}
L(u,f,\psi) = \exp\left(\sum_{n=1}^{\infty} S_n(f, \psi) \frac{u^n}{n}\right) = \prod_P \left(1-\psi_f(P)u^{\deg P}\right)^{-1},
\end{equation}
where the product is taken over monic irreducible polynomials in $\F_q[X].$ In fact, throughout this paper $P$ will denote such a polynomial and, if $n=\deg P$ we have
\[\psi_f(P) = \sum_{\alpha \in \F_{q^n} \atop  P(\alpha)=0} \psi(f(\alpha))= \psi(\tr_n(f(\alpha))) \textrm{ for any root $\alpha$ of $P$.}\]
To see that the exponential is equal to the product over primes in \eqref{Euler-product}, one has to write the exponential as an Euler product over the closed points of $\mathbb A^1.$ Namely, if we denote by $\mathcal S_n$ the set of closed points of $\mathbb A^1$ of degree $n,$ we can write
\begin{eqnarray*}
L(u,f,\psi) &= & \exp\left(\sum_{n=1}^{\infty} S_n(f, \psi) \frac{u^n}{n}\right)\\
&=& \exp\left(\sum_{n=1}^\infty \sum_{x \in \mathcal S_n} \sum_{k=1}^\infty \psi(\tr_{kn}(f(x))) \frac{u^{kn}}{k} \right).
\end{eqnarray*}
 
 The denominator of the fraction is $k$,  not $kn,$ because each closed point $x \in \mathcal S_n$ produces $n$ rational points of $\F_{q^n}.$ Thus, 

\begin{eqnarray*}L(u,f,\psi) &=& \prod_{n=1}^\infty \prod_{x \in \mathcal S_n} \exp\left(\sum_{k=1}^\infty \frac{\big(\psi(\tr_{n}(f(x)))u^n\big)^k}{k} \right)\\
&=& \prod_{n=1}^\infty \prod_{x \in \mathcal S_n} (1-\psi(\tr_{n}(f(x)))u^n)^{-1}\\
&= &\prod_{x \textrm{ closed point of } \mathbb A^1} (1-\psi(\tr_{\deg x}(f(x)))u^{\deg x})^{-1},
\end{eqnarray*}
\noindent which is exactly the product over primes that appears in \eqref{Euler-product}.

Note that for the trivial character $\psi=1$, the same formula gives \[L(u,f,1)= Z_{\mathbb A^1}(u)= \frac{1}{1-qu}.\]
The factor at infinity is then given by \[\psi_f(\infty) = \begin{cases} 1 & \psi = 1, \\
0 & \psi \neq 1. \end{cases}\]
Therefore we have
\[Z_{C_f}(u) = \prod_{\psi} L^*(u,f,\psi),\]
where $L^*(u,f,\psi)$ are the completed $L$-functions,
\[L^*(u,f,\psi) = \prod_{v} \left(1-\psi_f(P_v) u^{\deg P_v}\right)^{-1}.\] Here the product is taken over all places $v$ of $\F_q(X).$

From now on we will fix a non-trivial additive character $\psi$ of $\mathbb F_p$ given by a certain choice $\zeta$ of a primitive $p$th root of unity in $\mathbb C.$ Then, all the other non-trivial characters of $\mathbb F_p$ are of the form $\sigma \circ \psi$ where $\sigma$ is an automorphism of the cyclotomic field $\mathbb Q(\zeta).$ The reciprocals of zeroes of the $L(u,f, \sigma\circ \psi)$ are exactly the Galois conjugates $\sigma(\alpha_j(f,\psi)),$ $1\leq j \leq d-1,$ of the reciprocals of the roots of $L(u, f, \psi).$ 
In order to compute the distribution of the zeroes of the Weil zeta functions $Z_{C_f}$ as $C_f$ runs over 
$\mathcal AS_{g,0}(\F_q)$ we are going to compute the distribution of the angles $\thet , \theta_j(f, \bar \psi), 1 \leq j \leq d-1,$ for our specific choice of the additive character $\psi,$ as $f$ runs through $\mathcal F_d',$ where $g=(d-1)(p-1)/2.$ 
Since the roots of $L(u, f, \psi)$ and $L(u, f, \bar{\psi})$ are conjugate, it suffices to work  with symmetric intervals. The distribution of the roots of the whole zeta function is then obtained by combining the $(p-1)/2$ distributions for the various choices of $\psi$.

As discussed in the introduction, we will consider $\F_q$-points of the moduli space $\mathcal{AS}_{g,0}$ of Artin-Schreier covers of $p$-rank $0.$ A cover consists of an Artin-Schreier curve for which we fix an automorphism of order $p$ and an isomorphism between the quotient and $\mathbb P^1.$ We also choose the ramification divisor to be $D=(\infty).$ Thus the one branch point of our $p$-rank $0$ covers is at infinity.

Concretely, we consider, up to $\F_q$-isomorphism, pairs of curves with affine model $C_f: Y^p - Y = f(X)$ with $f(X)$ a polynomial of degree $d = 2g/(p-1)+1$ not divisible by $p$ together with the automorphism $Y \mapsto Y+1.$  

Using  the $\F_q$-isomorphism $(X,Y) \mapsto (X, Y+aX^k)$, we get that $C_f$ is isomorphic to $C_g$ where
$g(X) = f(X) + aX^k - a^p X^{kp}$. By using this isomorphism, we are reduced to considering the Artin-Schreier curves with model
$C_f: Y^p-Y = f(X)$ where $f(X)$ is an element of the family  $\mathcal F_d'$  defined in the introduction as 
\begin{eqnarray} \label{ourfamily}
\mathcal F_d'=\left\{ a_dX^d + a_{d-1}X^{d-1} + \dots + a_0\in \F_q[X] : a_d \in \F_q^*, a_{pk}=0, 1 \leq k \leq \left\lfloor\frac{d}{p}\right\rfloor \right\}.
\end{eqnarray}
Except for the isomorphisms described above, no two such affine models are isomorphic.  Therefore considering all affine
 models $Y^p - Y = f(X)$ with $f(X) \in \mathcal F_d'$ is equivalent to considering
all the $\mathbb F_q$- points of the moduli space  $\mathcal{AS}_{g,0}.$ For more details on this one-to-one correspondence between our family and $\mathcal{AS}_{g,0} (\mathbb F_q),$ see \cite[Proposition 3.6]{pz}.

In \cite{entin}, the author is considering a slightly different family by
also allowing twists, i.e. isomorphism over
$\F_{q^p}$. This amounts to  the models $C_f: Y^p-Y = f(X),$ with $f(X) \in \mathcal F_d''$, where
\[
\mathcal F_d''=\left\{ a_dX^d + a_{d-1}X^{d-1} + \dots + a_0\in \F_q[X] : a_d \in \F_q^*, a_{pk}=0,
0 \leq k \leq\left\lfloor\frac{d}{p}\right\rfloor \right\}. \]

Finally, we will denote by
\begin{eqnarray*}
\mathcal F_d=\left\{ a_dX^d + a_{d-1}X^{d-1} + \dots + a_0\in \F_q[X] : a_d \in \F_q^* \right\},
\end{eqnarray*} \noindent the set of all polynomials of degree $d$ in $\F_q[X].$
We will also need the map $\mu: \mathcal{F}_d\rightarrow \mathcal{F}_d'$ defined by
\begin{equation}
\label{map-mu} \mu\left(\sum_{i=0}^d a_i X^i\right)= a_0+\sum_{{i=1}\atop{i \neq kp, k \geq 1}}^d \left( \sum_{j=0}^{\left\lfloor \log_p (d/i)\right\rfloor} a_{ip^j}^{p^{-j}}\right) X^i.
\end{equation} This map is $q^{\left\lfloor\frac{d}{p}\right\rfloor}$-to-one and preserves the
trace of $f(\alpha)$, which will allow us to work with  $\mathcal F_d$ instead of $\mathcal F'_d$ when taking averages.

\subsection{Remark on the number of points}

For $d$ large enough, the elements of $\mathcal F_d'$ have the same chance as any random polynomial of degree $d$ in $\mathbb F_q[X]$ to take a given value in some extension of $\mathbb F_q.$
Thus, if $p \nmid n,$ as soon as $d - \left\lfloor d/p \right\rfloor >q^n,$ the distribution of $\{\#C_f(\mathbb F_{q^n}): f \in \mathcal F_d'\}$ is given by a sum of i.i.d. random variables, one variable for each closed point of $\mathbb P^1$ of degree $e\mid n.$ As long as we stay away from the point at infinity where $f(X)$ has a pole, the fiber above each closed  point $x$ of $\mathbb P^1$ contains $pe$ rational points on the Artin-Schreier cover $C_f$ if $x$ happens to be in the kernel of the absolute trace map $\tr_{n}:\F_{q^n} \to \mathbb F_p,$ and no points otherwise. Hence each random variable in the sum takes the value $pe$ with probability $1/p$ and $0$ with probability $1-1/p.$ The average number of points is then $1+q^n,$ the constant $1$ coming from the point at infinity where the polynomial $f(X)$ has a pole and the fiber above it contains just $1$ point.

If $p\mid n,$ the average is higher because there are certain points of $\mathbb P^1$ of degree $e$ for which the fiber is forced to have $pe$ points (i.e. the points of degree $ e \mid \frac{n}{p}$). One adjusts the computation accordingly and obtains that the average number in $C_f(\F_{q^n})$ is now $1+q^n + (p-1)q^{n/p}.$  This is the essential reason behind Entin's result on the matter \cite[Theorem 4]{entin}, except that his count does not take into account the point at infinity.

\section{Explicit Formulas}\label{explicit}

Let $K$ be a positive integer,  $e(\theta) = e^{2\pi i \theta}$ and let  $h(\theta) = \sum_{{|k|}\leq K} a_k e(k\theta)$ be a trigonometric polynomial.
Then the coefficients $a_k$ are given by the Fourier transform
$$a_k = \widehat{h}(k) = \int_{-1/2}^{1/2} h(\theta) e (- k \theta) d\theta.$$

We prove in this section two explicit formulas for $L(u,f,\psi)$, written as an exponential of a sum or as a product
over primes as in (\ref{Euler-product}). The first explicit formula (Lemma \ref{Explicit-Formula}) will be used to compute
the moments over the family ${\mathcal{F}}_d'$, and the second explicit formula (Lemma \ref{Explicit-formula-relevant}) will be used to
prove a result about the number of zeroes for a fixed $C_f$  (see Section \ref{distrzeros}).

\begin{lem}\label{Explicit-Formula}
Let $h(\theta) = \sum_{{|k|}\leq K}\widehat{h}(k)e(k\theta)$  be a trigonometric polynomial. Let $\theta_j(f, \psi)$ be
the eigenangles of the $L$-function $L(u,f,\psi)$.
Then we have
\begin{equation}
\sum_{j=1}^{d-1}h(\thet) = (d-1)\widehat{h}(0) - \sum_{k=1}^{K}\frac{\widehat{h}(k)S_k(f,\psi) + \widehat{h}(-k)S_k(f,\overline{\psi})}{q^{k/2}}.
\end{equation}
\end{lem}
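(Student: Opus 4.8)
The plan is to start from the factorization $L(u,f,\psi) = \prod_{j=1}^{d-1}(1-\alpha_j(f,\psi)u)$ with $\alpha_j(f,\psi) = \sqrt{q}\,e(\theta_j(f,\psi))$, take a logarithmic derivative to extract a formula for the power sums $\sum_j \alpha_j(f,\psi)^k$, and then match this against the exponential form $L(u,f,\psi) = \exp\bigl(\sum_{n\ge 1} S_n(f,\psi) u^n/n\bigr)$ from \eqref{Euler-product}. Concretely, taking $-u\frac{d}{du}\log$ of the product gives $\sum_{j=1}^{d-1}\sum_{n\ge 1}\alpha_j(f,\psi)^n u^n$, while doing the same to the exponential gives $\sum_{n\ge1} S_n(f,\psi)u^n$; comparing coefficients of $u^n$ yields the identity $\sum_{j=1}^{d-1}\alpha_j(f,\psi)^n = S_n(f,\psi)$ for every $n\ge1$. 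Dividing by $q^{n/2}$ gives $\sum_{j=1}^{d-1} e(n\theta_j(f,\psi)) = S_n(f,\psi)/q^{n/2}$, and the analogous identity for $\bar\psi$ reads $\sum_{j=1}^{d-1} e(-n\theta_j(f,\psi)) = S_n(f,\bar\psi)/q^{n/2}$, since the reciprocal roots of $L(u,f,\bar\psi)$ are the complex conjugates $\overline{\alpha_j(f,\psi)} = \sqrt q\, e(-\theta_j(f,\psi))$.

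Next I would simply expand the trigonometric polynomial: writing $h(\theta) = \sum_{|k|\le K}\widehat h(k) e(k\theta)$, we get
\[
\sum_{j=1}^{d-1} h(\theta_j(f,\psi)) = \sum_{|k|\le K}\widehat h(k)\sum_{j=1}^{d-1} e(k\theta_j(f,\psi)).
\]
The $k=0$ term contributes $(d-1)\widehat h(0)$. For $k\ge1$ the inner sum is $S_k(f,\psi)/q^{k/2}$ by the identity above, and for $k\le -1$, writing $k=-m$ with $m\ge1$, the inner sum is $S_m(f,\bar\psi)/q^{m/2}$. Collecting the positive and negative indices together and relabeling gives exactly
\[
\sum_{j=1}^{d-1} h(\theta_j(f,\psi)) = (d-1)\widehat h(0) - \sum_{k=1}^{K}\frac{\widehat h(k) S_k(f,\psi) + \widehat h(-k) S_k(f,\bar\psi)}{q^{k/2}},
\]
where the minus sign will appear because the natural way to obtain a clean statement is to note that the reciprocal roots $\alpha_j$ are the roots of the reversed polynomial, so the power-sum identity actually comes out as $S_n(f,\psi) = \sum_j \alpha_j(f,\psi)^n$ up to the sign convention coming from $-u\frac{d}{du}\log(1-\alpha u) = \sum_{n\ge1}\alpha^n u^n$; I would double-check the sign bookkeeping here, as that is the only place an error could creep in.

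There is essentially no serious obstacle: the result is a formal manipulation of generating functions together with the elementary Newton-type identity $S_n(f,\psi) = \sum_{j=1}^{d-1}\alpha_j(f,\psi)^n$, plus the observation that $L(u,f,\bar\psi)$ has reciprocal roots conjugate to those of $L(u,f,\psi)$ (already recorded in Section \ref{a-s}). The only point requiring care is making the sign and index conventions consistent — in particular confirming that the contribution of negative Fourier coefficients pairs with $S_k(f,\bar\psi)$ rather than $S_k(f,\psi)$, which follows from $e(-k\theta_j(f,\psi)) = e(k\theta_j(f,\bar\psi))$ by conjugation. Once these bookkeeping matters are settled, the identity falls out immediately.
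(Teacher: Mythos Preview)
Your approach is exactly the paper's: take logarithmic derivatives of the product and exponential expressions for $L(u,f,\psi)$, compare coefficients to get the Newton identity, then expand the trigonometric polynomial and split into $k=0$, $k>0$, $k<0$. The only slip is the sign you flagged: applying $-u\frac{d}{du}\log$ to $\exp\bigl(\sum_{n\ge1}S_n(f,\psi)u^n/n\bigr)$ gives $-\sum_{n\ge1}S_n(f,\psi)u^n$, not $+\sum_{n\ge1}S_n(f,\psi)u^n$, so the correct identity is $-\sum_{j=1}^{d-1}\alpha_j(f,\psi)^n = S_n(f,\psi)$ (as the paper has it), and this is precisely what produces the minus sign in the final formula.
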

\begin{proof}
Recall from above that
$$L(u,f,\psi) = \exp\left(\sum_{n=1}^{\infty}S_n(f,\psi)\frac{u^n}{n}\right) = \prod_{j=1}^{d-1}(1- \alpha_j(f,\psi)u).$$
Taking logarithmic derivatives, we have
$$\frac{d}{du}\sum_{j=1}^{d-1}\log(1- \alpha_j(f,\psi)u) = \frac{d}{du}\sum_{n=1}^{\infty}S_n(f,\psi)\frac{u^n}{n}.$$
Multiplying both sides by $u,$ we get
$$\sum_{j=1}^{d-1}\frac{-\alpha_j(f,\psi)u}{1-\alpha_j(f,\psi)u} = \sum_{n=1}^{\infty}S_n(f,\psi)u^n,$$
that is,
$$-\sum_{j=1}^{d-1}\sum_{n=1}^{\infty}(\alpha_j(f,\psi)u)^n = \sum_{n=1}^{\infty}S_n(f,\psi)u^n.$$
Comparing coefficients,
$$-\sum_{j=1}^{d-1}(\alpha_j(f,\psi))^n = S_n(f,\psi).$$
Thus, for $n>0,$ we get
\begin{equation} \label{ngeq0} -\sum_{j=1}^{d-1}e^{2\pi i n \thet} = \frac{S_n(f,\psi)}{q^{n/2}}.\end{equation}
For $n<0,$ taking complex conjugates, we have  by (\ref{factorization-of-L}) and (\ref{ngeq0})
\begin{eqnarray*}
-\sum_{j=1}^{d-1}e^{2\pi i n \thet}
 &=& -\overline{\sum_{j=1}^{d-1}e^{2\pi i |n|\thet}}
 =-\overline{\sum_{j=1}^{d-1}\frac{\alpha_j(f, \psi)^{|n|}}{q^{|n|/2}}}\\
 &=&\overline{\frac{S_{|n|}(f,\psi)}{q^{|n|/2}}}
 = \frac{S_{|n|}(f,\overline{\psi})}{q^{|n|/2}} = \frac{S_{|n|}(f,\psi^{-1})}{q^{|n|/2}}.
 \end{eqnarray*}
Thus,
\begin{eqnarray*}
\sum_{j=0}^{d-1}h(\thet)\
&=&\sum_{j=1}^{d-1}\sum_{k=-K}^K\widehat{h}(k)e(k\thet)\\
&&=(d-1)\widehat{h}(0)+\sum_{j=1}^{d-1}\sum_{k=1}^K\widehat{h}(k)e(k\thet) +\sum_{j=1}^{d-1}\sum_{k = -K}^{-1}\widehat{h}(k)e(k\thet)\\
&&=(d-1)\widehat{h}(0) - \sum_{k=1}^K\widehat{h}(k)\left(\frac{S_k(f,\psi)}{q^{k/2}}\right) - \sum_{k = -K}^{-1}\widehat{h}(k)\left(\frac{S_{-k}(f,\overline{\psi})}{q^{-k/2}}\right)\\
&&=(d-1)\widehat{h}(0) - \sum_{k=1}^K\frac{\widehat{h}(k)S_k(f,\psi) + \widehat{h}(-k)S_k(f,\overline{\psi})}{q^{k/2}}.
\end{eqnarray*}
\end{proof}

\begin{lem}\label{Explicit-formula-relevant} Let $\theta_j(f, \psi)$ be
the eigenangles of the $L$-function $L(u,f,\psi)$.
Then for any $n\geq 1,$
$$ -\sum_{j=1}^{d-1}e^{2\pi i n \thet} = \sum_{\deg (M) = n}\frac{\Lambda(M)\psi_f(M)}{q^{n/2}}$$
where $M$ runs over monic polynomials in $\mathbb F_q[X],$
\[\Lambda(M) = \begin{cases}
\deg P&\textrm{ if } M = P^k\,\textrm{for some }k\geq 1\textrm{ and } P \textrm{ irreducible},\\ 0&\textrm { otherwise},
\end{cases}\]
and $\psi_f(P^k) = \psi_f(P)^k.$
\end{lem}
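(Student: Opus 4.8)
The plan is to separate the claimed identity into a \emph{spectral side}, which has already been computed, and an \emph{arithmetic side}, which is a standard manipulation. Inside the proof of Lemma \ref{Explicit-Formula} we established in \eqref{ngeq0} that for every $n\geq 1$
$$-\sum_{j=1}^{d-1} e^{2\pi i n\thet} = \frac{S_n(f,\psi)}{q^{n/2}},$$
so the lemma is equivalent to the field-theoretic identity $S_n(f,\psi) = \sum_{\deg M = n}\Lambda(M)\psi_f(M)$, and it is this that I would prove.

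First I would derive the arithmetic identity by applying $u\,\frac{d}{du}\log$ to both expressions for $L(u,f,\psi)$ in \eqref{Euler-product}. The exponential form contributes $\sum_{n\geq 1}S_n(f,\psi)\,u^n$, while the Euler product $\prod_P(1-\psi_f(P)u^{\deg P})^{-1}$, after expanding each local factor as a geometric series, contributes
$$\sum_{P}\sum_{k\geq 1}(\deg P)\,\psi_f(P)^k\,u^{k\deg P} = \sum_{n\geq 1}\Bigl(\sum_{\deg M = n}\Lambda(M)\psi_f(M)\Bigr)u^n,$$
where in the last equality I collect the terms with $k\deg P = n$, write $M = P^k$, and use $\psi_f(P^k)=\psi_f(P)^k$ together with $\Lambda(P^k)=\deg P$. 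Comparing coefficients of $u^n$ gives the identity, hence the lemma.

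For a self-contained argument I would instead go back to $S_n(f,\psi) = \sum_{x\in\F_{q^n}}\psi(\tr_n(f(x)))$ and partition $\F_{q^n}$ according to the minimal polynomial $P$ of $x$ over $\F_q$, say of degree $e\mid n$. Transitivity of the trace and $\mathrm{Tr}_{\F_{q^n}/\F_{q^e}}(y)=(n/e)y$ for $y\in\F_{q^e}$ give $\psi(\tr_n(f(x))) = \psi(\tr_e(f(x)))^{n/e} = \psi_f(P)^{n/e} = \psi_f(P^{n/e})$, a value that does not depend on which of the $e$ roots of $P$ is taken. Summing over the roots of each $P$ and then over $P$ yields $S_n(f,\psi) = \sum_{e\mid n}\sum_{\deg P = e}(\deg P)\,\psi_f(P^{n/e})$, and $P\mapsto M = P^{n/e}$ is a bijection from the irreducibles of degree $e\mid n$ onto the prime powers of degree $n$, under which $\deg P = \Lambda(M)$; this is exactly $\sum_{\deg M = n}\Lambda(M)\psi_f(M)$.

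The only step that is not purely formal, and the one I would state most carefully, is the cross-field trace bookkeeping: checking that the convention $\psi_f(P^k):=\psi_f(P)^k$ matches $\psi(\tr_{k\deg P}(f(\alpha)))$ for a root $\alpha$ of $P$, so that in either argument the collected contributions genuinely assemble into $\psi_f(M)$ with the correct von Mangoldt weight $\Lambda(M)$. The remaining ingredients — manipulation of formal power series and the count of roots of an irreducible polynomial — are routine and already appear in the derivation of \eqref{Euler-product}.
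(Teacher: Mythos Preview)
Your primary argument---taking $u\frac{d}{du}\log$ of the Euler product and matching coefficients---is exactly what the paper does, only the paper compares the factored polynomial $\prod_j(1-\alpha_j u)$ directly against $\prod_P(1-\psi_f(P)u^{\deg P})^{-1}$ rather than routing through $S_n(f,\psi)$ via \eqref{ngeq0}; the computation is identical. Your alternative self-contained argument, partitioning $\F_{q^n}$ by minimal polynomials and tracking the trace, is a pleasant combinatorial proof that the paper does not give (it is essentially the closed-point decomposition used earlier to derive \eqref{Euler-product}, run in reverse), and it has the minor advantage of making the identity $S_n(f,\psi)=\sum_{\deg M=n}\Lambda(M)\psi_f(M)$ transparent without any formal power series manipulation.
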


\begin{proof}
Comparing equations (\ref{Euler-product}) and (\ref{factorization-of-L}), we have
$$\prod_{j=1}^{d-1}(1 - \alp u) = \prod_P(1 - \psi_f(P)u^{\deg P})^{-1},$$
where the product on the right hand side is taken over monic irreducible polynomials in $\F_q[X].$
Taking logarithmic derivatives and multiplying by $u,$ we deduce that
$$-\sum_{j=1}^{d-1}\sum_{n=1}^{\infty}(\alp u )^n = \sum_M\Lambda(M)u^{\deg M}\psi_f(M).$$

Comparing the  coefficients of $u^n,$ we get
$$-\sum_{j=1}^{d-1}\alp^n = \sum_{\deg(M) = n}\Lambda(M)\psi_f(M),$$
and the result follows by dividing both sides by $q^{n/2}$.
\end{proof}

\section{The distribution of zeroes of $L(u,f,\psi)$} \label{distrzeros}

In this section we use the Erd\"{o}s-Tur\'{a}n inequality (see \cite{M}, Corollary 1.1) to prove a result on the number of  eigenangles $\theta_j(f, \psi)$ in an interval ${\mathcal{I}}$
for a fixed $L$-function $L(u,f,\psi)$.

\begin{thm}\label{Erdos-Turan}\text{[P. Erd\"{o}s, P. Tur\'{a}n]}
Let $x_1,x_2,\dots,x_N$ be real numbers lying in the unit interval $[-1/2,1/2).$  For any interval $\mathcal{I} \subseteq [-1/2,1/2),$
  let $A(\mathcal{I},N,\{x_n\})$ denote the number of
elements from the above set in $\mathcal{I}$.  Let $|\mathcal{I}|$ denote the length of the interval.  There exist absolute constants $B_1$ and $B_2$ such that for any $K\geq 1,$
$$|A(\mathcal{I},N,\{x_n\}) - N |\mathcal{I}|| \leq \frac{B_1N}{K+1} + B_2\sum_{k=1}^K\frac{1}{k}\left|\sum_{n=1}^{N}e^{2\pi i k x_n}\right|.$$
\end{thm}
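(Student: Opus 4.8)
The statement is the classical Erd\"{o}s--Tur\'{a}n inequality, which one may simply quote from \cite{M}; for completeness I sketch the standard proof, which uses only the Beurling--Selberg extremal polynomials whose properties are recalled in Section~\ref{trigapprox}. Write $\chi_{\mathcal{I}}$ for the $1$-periodic function that equals the indicator of $\mathcal{I}$ on $[-1/2,1/2)$, so that $A(\mathcal{I},N,\{x_n\})=\sum_{n=1}^N\chi_{\mathcal{I}}(x_n)$ and $N|\mathcal{I}|=N\,\widehat{\chi_{\mathcal{I}}}(0)$. The plan is to trap $\chi_{\mathcal{I}}$ between two trigonometric polynomials of degree at most $K$ whose Fourier coefficients are uniformly close to those of $\chi_{\mathcal{I}}$, evaluate at the points $x_n$, and expand into exponential sums.

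Concretely, Selberg's construction provides, for every $K\ge 1$, real trigonometric polynomials $S_K^{\pm}(\theta)=\sum_{|k|\le K}\widehat{S_K^{\pm}}(k)e(k\theta)$ with $S_K^-(\theta)\le\chi_{\mathcal{I}}(\theta)\le S_K^+(\theta)$ for all $\theta$, and such that, for an absolute constant $c_0$,
\[
\bigl|\widehat{S_K^{\pm}}(0)-|\mathcal{I}|\bigr|\le\frac{c_0}{K+1},\qquad \bigl|\widehat{S_K^{\pm}}(k)\bigr|\le\frac{1}{\pi|k|}+\frac{c_0}{K+1}\quad(1\le|k|\le K).
\]
Applying the majorant pointwise and summing over $n$ gives
\[
A(\mathcal{I},N,\{x_n\})\le\sum_{n=1}^N S_K^+(x_n)=N\,\widehat{S_K^{+}}(0)+\sum_{1\le|k|\le K}\widehat{S_K^{+}}(k)\sum_{n=1}^N e(kx_n),
\]
hence $A(\mathcal{I},N,\{x_n\})-N|\mathcal{I}|\le \tfrac{c_0N}{K+1}+\sum_{1\le|k|\le K}\bigl|\widehat{S_K^{+}}(k)\bigr|\,\bigl|\sum_{n=1}^N e(kx_n)\bigr|$.

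It remains to reorganize the tail. Since $\bigl|\sum_n e(-kx_n)\bigr|=\bigl|\sum_n e(kx_n)\bigr|$ and, $S_K^+$ being real, $\bigl|\widehat{S_K^{+}}(-k)\bigr|=\bigl|\widehat{S_K^{+}}(k)\bigr|$, the sum over $1\le|k|\le K$ equals $2\sum_{k=1}^K\bigl|\widehat{S_K^{+}}(k)\bigr|\,\bigl|\sum_n e(kx_n)\bigr|$; and for $1\le k\le K$ one has $\tfrac{1}{K+1}<\tfrac1k$, so $\bigl|\widehat{S_K^{+}}(k)\bigr|\le(\tfrac1\pi+c_0)\tfrac1k$. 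This yields $A-N|\mathcal{I}|\le \tfrac{c_0N}{K+1}+2(\tfrac1\pi+c_0)\sum_{k=1}^K\tfrac1k\bigl|\sum_n e(kx_n)\bigr|$, and running the same argument with the minorant $S_K^-$ supplies the matching lower bound, giving the theorem with $B_1=c_0$ and $B_2=2(\tfrac1\pi+c_0)$. The only substantive ingredient is the existence of the extremal polynomials $S_K^{\pm}$ with the stated coefficient estimates --- the classical Beurling--Selberg problem --- and this is exactly the input that will be imported in Section~\ref{trigapprox} (and is available in \cite{M}); everything else is bookkeeping.
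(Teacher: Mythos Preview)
The paper does not prove this theorem at all: it is stated as a quotation from \cite[Corollary~1.1]{M} and then applied as a black box in the proof of Theorem~\ref{boundN_I}. Your proposal already acknowledges this in the first sentence, and the sketch you supply is the standard Beurling--Selberg argument (indeed, exactly the one in \cite{M}); it is correct and nothing more is needed.
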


We now prove the following theorem, which is the analogue of Proposition 5.1 in \cite{fr}.

\begin{thm}\label{boundN_I}
For any $\mathcal{I} \subseteq [-1/2,1/2),$ let $N_\mathcal{I}(f, \psi) := \#\{1\leq j \leq d-1:\,\thet \in \mathcal{I}\}.$  Then
$$N_\mathcal{I}(f,\psi) = (d-1) |\mathcal{I}| +O\left(\frac{d}{\log d}\right).$$
\end{thm}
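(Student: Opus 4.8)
The plan is to combine the Erdős–Turán inequality (Theorem \ref{Erdos-Turan}) with the second explicit formula (Lemma \ref{Explicit-formula-relevant}) and a pointwise bound on the prime-power character sums $\psi_f(M)$. First I would apply Theorem \ref{Erdos-Turan} with $N = d-1$, with the points $x_n$ being the eigenangles $\theta_j(f,\psi)$, for a parameter $K$ to be chosen at the end. This gives
\[
\left| N_\mathcal{I}(f,\psi) - (d-1)|\mathcal{I}| \right| \le \frac{B_1 (d-1)}{K+1} + B_2 \sum_{k=1}^{K} \frac{1}{k} \left| \sum_{j=1}^{d-1} e^{2\pi i k \theta_j(f,\psi)} \right|.
\]
The main point is then to bound the character-sum term $\left| \sum_{j=1}^{d-1} e^{2\pi i k \theta_j(f,\psi)} \right|$ for each $k$ with $1 \le k \le K$.

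Next I would invoke Lemma \ref{Explicit-formula-relevant}, which identifies
\[
-\sum_{j=1}^{d-1} e^{2\pi i k \theta_j(f,\psi)} = q^{-k/2} \sum_{\deg(M) = k} \Lambda(M)\,\psi_f(M).
\]
Since $|\psi_f(M)| \le 1$ (it is a sum of values of the character $\psi$ at the roots of the irreducible underlying $M$, and more precisely $\psi_f(P^k)=\psi_f(P)^k$ with $|\psi_f(P)|=1$ — actually $\psi_f(P)$ is a single root of unity for any single irreducible $P$, so $|\psi_f(M)|\le 1$), we get the trivial bound
\[
\left| \sum_{\deg(M)=k} \Lambda(M)\psi_f(M) \right| \le \sum_{\deg(M) = k} \Lambda(M) = q^k,
\]
by the prime-power counting identity in $\F_q[X]$ (the analogue of $\sum_{n\le x}\Lambda(n)$, which here is exactly $q^k$ since the number of monic polynomials of degree $k$ is $q^k$ and $\sum_{\deg M = k}\Lambda(M)$ counts them with the von Mangoldt weight, giving $q^k$). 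Hence $\left|\sum_{j=1}^{d-1} e^{2\pi i k\theta_j}\right| \le q^{k/2}$.

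Plugging this in, the sum over $k$ is $B_2 \sum_{k=1}^K \frac{1}{k} q^{k/2} \ll q^{K/2}$ (a geometric sum dominated by its last term, up to a constant depending on $q$). So the bound reads
\[
\left| N_\mathcal{I}(f,\psi) - (d-1)|\mathcal{I}| \right| \ll \frac{d}{K+1} + q^{K/2}.
\]
Now I would optimize by choosing $K$ so that both terms are of comparable size, roughly $K = \frac{1}{\log q}\log d - c\log\log d$ or more simply $K = \lfloor \tfrac{1}{\log q}(\log d - 3\log\log d)\rfloor$, which makes $q^{K/2} \ll \sqrt{d}/(\log d)^{3/2} \ll d/\log d$ and $d/(K+1) \ll d/\log d$. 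This yields $N_\mathcal{I}(f,\psi) = (d-1)|\mathcal{I}| + O(d/\log d)$, as claimed. The implied constant depends on $q$ (through $\log q$) but not on $f$, $\psi$, or $\mathcal{I}$, which is exactly what is needed.

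The only genuinely delicate point is verifying the pointwise bound $|\psi_f(M)| \le 1$ and the counting identity $\sum_{\deg M = k}\Lambda(M) = q^k$; both are standard (the first is immediate from $\psi_f(P^k) = \psi_f(P)^k$ with $\psi_f(P)$ a root of unity, the second from counting monic polynomials of degree $k$ grouped by their prime-power decomposition), so there is really no serious obstacle — this is a clean Weil-type argument, with the ``hard'' input (the Riemann Hypothesis for the $L$-functions, i.e.\ $|\alpha_j| = \sqrt q$) already baked into the normalization $\theta_j \in [-1/2,1/2)$ and into Lemma \ref{Explicit-formula-relevant}. One should just be careful that the bound is uniform in the interval $\mathcal{I}$ and to state explicitly that the $O$-constant is allowed to depend on $q$.
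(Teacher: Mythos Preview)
Your proof is correct and follows essentially the same route as the paper: apply Erd\H{o}s--Tur\'an, invoke Lemma~\ref{Explicit-formula-relevant}, trivially bound the character sum, and optimize $K$. The only cosmetic differences are that the paper uses $\Lambda(M)\le k$ and then the prime number theorem for $\F_q[X]$ to bound the number of prime powers of degree $k$, whereas you use the cleaner exact identity $\sum_{\deg M=k}\Lambda(M)=q^k$; and the paper takes the simpler choice $K=\lfloor \log d/\log q\rfloor$, which already suffices (your more elaborate choice of $K$ is unnecessary, since $q^{K/2}\asymp\sqrt{d}\ll d/\log d$ with the simple choice).
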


\begin{proof}
 By the Erd\"{o}s-Tur\'{a}n inequality and Lemma \ref{Explicit-formula-relevant}, we have
\begin{eqnarray*}
|N_\mathcal{I}(f,\psi) - (d-1) |\mathcal{I}| |\
&\ll&\frac{d}{K} + \sum_{k=1}^K\frac{1}{k}\left|\sum_{\deg M = k}\frac{\Lambda(M)\psi_f(M)}{q^{k/2}}\right|\\
&&\ll \frac{d}{K} + \sum_{k=1}^K\frac{1}{q^{k/2}}\sum_{M = P^a,\,a\geq 1 \atop{\deg M = k}}1.
\end{eqnarray*}
Applying the function-field analogue of the prime number theorem, the above expression is
$\ll \displaystyle \frac{d}{K} + \frac{q^{K/2}}{K}.$ Choosing $K = \left[\frac{\log d}{\log q}\right],$ we deduce the theorem.
\end{proof}

\section{Beurling-Selberg functions} \label{trigapprox}

By the functional equation, the conjugate of a root of $Z_{C_f}(u)$ is also a root so we can restrict to considering symmetric intervals. Let  $0<\beta<1$ and set $\mathcal I = [-\beta/2, \beta/2] \subset [-1/2,1/2)$.  We are going to
approximate the characteristic function of ${\mathcal{I}}$, $\chi_{\mathcal{I}}$, with Beurling-Selberg polynomials $I_K^\pm$.
We will use the following properties of the coefficients of Beurling-Selberg polynomials (see \cite{M}, ch 1.2).

\begin{itemize}
 \item[{\bf (a)}] The $I^\pm_K$ are trigonometric polynomials of degree $\leq K$,
i.e., $$I_K^{\pm}(x) = \sum_{|k| \leq K} \widehat{I}_K^{\pm}(k) e(k x).$$
\item[{\bf (b)}] The Beurling-Selberg polynomials bound the characteristic function from below and above: \[I_K^- \leq \chi_{\mathcal{I}}\leq I_K^+.\]
\item[{\bf (c)}] The integral of Beurling-Selberg polynomials is close to the length of the interval: \[\int_{-1/2}^{1/2} I_K^\pm(x) dx =\int_{-1/2}^{1/2} \chi_{\mathcal{I}}(x) dx \pm \frac{1}{K+1}.\]
\item[{\bf (d)}] The $I^\pm_K$ are even (since we are taking the interval $\mathcal{I}$
to be symmetric about the origin). It then follows that the Fourier coefficients
are also even, i.e. $\widehat{I}_K^{\pm}(-k) = \widehat{I}_K^{\pm}(k)$ for
$|k| \leq K$.
\item[{\bf (e)}] The nonzero Fourier coefficients are also close to those of the characteristic function:
\[|\widehat{I}_K^\pm (k) - \widehat{\chi}_{
 \mathcal{I}}(k) | \leq \frac{1}{K+1} 
\quad \Longrightarrow \quad \widehat{I}^\pm_K(k)=\frac{\sin (\pi k|\mathcal{I}|)}{\pi k} + O
\left( \frac{1}{K+1}\right), \quad k \geq 1.
\]
This implies the following bound:
\[|\widehat{I}_K^\pm (k)| \leq \frac{1}{K+1} +\min \left \{|\mathcal{I}|, \frac{\pi}{|k|}\right \}, \quad 0<|k|\leq K;\]
\end{itemize}

\begin{prop}(Proposition 4.1, \cite{fr}) \label{propFR} For $K\geq 1$ such that $K|\mathcal{I}|>1$, we have
\begin{eqnarray*}
\sum_{k \geq 1} \widehat{I}_K^\pm (2k)&=&O(1),\\
\sum_{k \geq 1} \widehat{I}_K^\pm (k)^2 k&=&\frac{1}{2\pi^2} \log (K|\mathcal{I}|) +O(1),\\
\sum_{k \geq 1} \widehat{I}_K^+ (k)\widehat{I}_K^- (k) k&=&\frac{1}{2\pi^2} \log (K|\mathcal{I}|) +O(1).\\
\end{eqnarray*}
\end{prop}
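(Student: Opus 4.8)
The plan is to reduce each of the three estimates to elementary bounds on partial sums of the classical series $\sum_k k^{-1}\sin(k\theta)$ and $\sum_k k^{-1}\cos(k\theta)$, using only properties (a)--(e) above together with the standing hypothesis that $|\mathcal{I}|$ stays bounded away from $1$ (which holds since $|\mathcal{I}|$ is either fixed in $(0,1)$ or tends to $0$). Set $\beta=|\mathcal{I}|$ and, for $1\le k\le K$, let $s_k=\frac{\sin(\pi k\beta)}{\pi k}$; by property (e) we may write $\widehat I_K^{\pm}(k)=s_k+e_k^{\pm}$ with $|e_k^{\pm}|\ll\frac{1}{K+1}$, and clearly $|ks_k|\le\min\{k\beta,\tfrac1\pi\}$.

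For the first sum, the degree bound (a) restricts $k$ to $1\le k\le\lfloor K/2\rfloor$. Substituting $\widehat I_K^{\pm}(2k)=\frac{\sin(2\pi k\beta)}{2\pi k}+O(\tfrac1{K+1})$, the error terms contribute $\ll\frac{K/2}{K+1}=O(1)$, while the main part $\frac{1}{2\pi}\sum_{1\le k\le K/2}\frac{\sin(2\pi k\beta)}{k}$ is $O(1)$ because the partial sums $\sum_{k=1}^N\frac{\sin(k\theta)}{k}$ are bounded by an absolute constant, uniformly in $N\ge1$ and $\theta\in\mathbb{R}$ (a standard fact, with bound essentially $\int_0^{\pi}\frac{\sin t}{t}\,dt$).

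For the second sum, expand $\widehat I_K^{\pm}(k)^2k=ks_k^2+2ks_ke_k^{\pm}+k(e_k^{\pm})^2$ and sum over $1\le k\le K$. The last term is $\ll\frac{1}{(K+1)^2}\sum_{k\le K}k=O(1)$; the cross term is $\ll\frac{1}{K+1}\sum_{k\le K}\min\{k\beta,1\}\ll\frac{1}{K+1}(\beta^{-1}+K)=O(1)$ after splitting the sum at $k\approx\beta^{-1}$ and invoking $K\beta>1$. For the main term, using $\sin^2(\pi k\beta)=\tfrac12(1-\cos(2\pi k\beta))$,
\[\sum_{k=1}^K ks_k^2=\frac{1}{2\pi^2}\sum_{k=1}^K\frac1k-\frac{1}{2\pi^2}\sum_{k=1}^K\frac{\cos(2\pi k\beta)}{k}=\frac{\log K}{2\pi^2}-\frac{1}{2\pi^2}\sum_{k=1}^K\frac{\cos(2\pi k\beta)}{k}+O(1).\]
Comparing with $\sum_{k\ge1}\frac{\cos(k\theta)}{k}=-\log|2\sin(\theta/2)|$ and bounding the tail by Abel summation gives $\bigl|\sum_{k>K}\frac{\cos(2\pi k\beta)}{k}\bigr|\ll\frac{1}{(K+1)|\sin(\pi\beta)|}\ll\frac{1}{(K+1)\beta}=O(1)$, using $|\sin(\pi\beta)|\gg\beta$ (valid since $\beta$ is bounded away from $1$) together with $K\beta>1$; hence $\sum_{k=1}^K\frac{\cos(2\pi k\beta)}{k}=-\log|2\sin(\pi\beta)|+O(1)=-\log\beta+O(1)$, and altogether $\sum_{k=1}^K ks_k^2=\frac{1}{2\pi^2}(\log K+\log\beta)+O(1)=\frac{1}{2\pi^2}\log(K|\mathcal{I}|)+O(1)$. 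The third sum is handled identically: writing $\widehat I_K^{+}(k)\widehat I_K^{-}(k)k=ks_k^2+ks_k(e_k^++e_k^-)+ke_k^+e_k^-$, the two error sums are $O(1)$ by the same estimates and the main term is the one just computed.

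The main obstacle is the bookkeeping of the error terms: one must use $K|\mathcal{I}|>1$ in an essential way, both to absorb the cross term $\frac{1}{K+1}\sum_{k\le K}\min\{k\beta,1\}$ and to control the tail of the cosine series, and one should note the feature peculiar to this problem that the $\log|\mathcal{I}|$ in the answer arises precisely from the deviation $\sum_{k\le K}k^{-1}\cos(2\pi k\beta)=-\log|2\sin(\pi\beta)|+O(1)\sim-\log\beta$ of that partial sum from its limiting value. Once the two classical estimates on $\sum k^{-1}\sin(k\theta)$ and $\sum k^{-1}\cos(k\theta)$ are in hand, everything else is routine.
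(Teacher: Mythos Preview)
Your argument is correct. The paper does not actually prove the first two identities; it simply cites Proposition~4.1 of \cite{fr} and then remarks that, since both $\widehat I_K^+(k)$ and $\widehat I_K^-(k)$ satisfy the same expansion $\frac{\sin(\pi k|\mathcal I|)}{\pi k}+O(1/K)$, the third identity follows by the identical computation. What you have written is therefore a self-contained reconstruction of the proof that the paper outsources: the decomposition $\widehat I_K^\pm(k)=s_k+e_k^\pm$ via property~(e), the $\sin^2$-to-$\cos$ rewrite, and the evaluation of $\sum_{k\le K}\frac{\cos(2\pi k\beta)}{k}$ via the closed form $-\log|2\sin(\pi\beta)|$ with an Abel-summation tail bound are exactly the ingredients one expects in the Faifman--Rudnick argument, and your treatment of the third sum matches the paper's one-line reduction precisely.

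One small point worth flagging: you invoke the standing hypothesis that $|\mathcal I|$ is bounded away from $1$ to get $|\sin(\pi\beta)|\gg\beta$ and hence to control both the tail of the cosine series and to identify $-\log|2\sin(\pi\beta)|$ with $-\log\beta+O(1)$. This is indeed how the paper uses the proposition (either $|\mathcal I|$ is fixed in $(0,1)$ or $|\mathcal I|\to 0$), so the restriction is harmless in context, but it is not explicit in the proposition as stated; without it the $O(1)$ constants would degenerate as $\beta\to 1$. Otherwise the bookkeeping is clean and nothing is missing.
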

Note that for a given $K$ these sums are actually finite, since the Beurling-Selberg polynomials $I_K^\pm$ have degree at most $K$. 
\begin{proof} The first two statements are proven in Proposition 4.1 of \cite{fr}.
Since $$\widehat{I}_K^{\pm}(k) = \frac{\sin (\pi k|\mathcal{I}|)}{\pi k} + O
\left( \frac{1}{K}\right),$$
holds for both $\widehat{I}_K^{+}(k)$ and $\widehat{I}_K^{-}(k),$ the  third statement follows by exactly the same proof as the second statement.
\end{proof}

We will also need the following estimates.
\begin{prop} \label{propmanysums} For $\alpha_1,\dots, \alpha_r, \gamma_1,\dots, \gamma_r>0$, and $\beta_1,\dots,\beta_r \in \mathbb{R}$, we have,
 \[\sum_{k_1,\dots,k_r \geq 1} {\widehat{I}_K^\pm(k_1)}^{\alpha_1} \dots {\widehat{I}_K^\pm(k_r)}^{\alpha_r}k_1^{\beta_1}\dots k_r^{\beta_r}q^{-\gamma_1k_1-\dots-\gamma_r k_r}=O(1).\]
For $\alpha_1,\alpha_2,\gamma >0$, and $\beta \in \mathbb{R}$,
\[\sum_{k\geq1} {\widehat{I}_K^\pm(k)}^{\alpha_1}  {\widehat{I}_K^\pm(2k)}^{\alpha_2} k^\beta q^{-\gamma k}=O(1).\]
\end{prop}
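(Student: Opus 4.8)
The plan is to reduce everything to a single one-variable estimate and then apply it coordinatewise. The key input is property \textbf{(e)}, which gives the uniform bound
\[
|\widehat{I}_K^\pm(k)| \leq \frac{1}{K+1} + \min\left\{|\mathcal{I}|, \frac{\pi}{|k|}\right\} \leq |\mathcal{I}| + \frac{\pi}{k} \leq \frac{\pi+1}{k}
\]
for $1 \leq k \leq K$ (using $|\mathcal{I}| < 1$ and $k \leq K$ to absorb the $1/(K+1)$ term, and $k \leq 1/|\mathcal{I}|$ would even give better, but this crude bound suffices), while $\widehat{I}_K^\pm(k) = 0$ for $k > K$. In particular each factor $\widehat{I}_K^\pm(k_i)^{\alpha_i}$ is bounded by a constant times $k_i^{-\alpha_i}$ if $\alpha_i \geq 0$; for the range $0 < \alpha_i < 1$ we simply use $|\widehat{I}_K^\pm(k_i)|^{\alpha_i} \leq (\pi+1)^{\alpha_i}$ is not enough, so instead we keep the power $k_i^{-\alpha_i}$, noting the crucial point that the geometric factor $q^{-\gamma_i k_i}$ will dominate regardless.

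For the first statement I would observe that the sum factors as a product over the $r$ coordinates, so it suffices to prove that for every $\alpha > 0$, $\beta \in \mathbb{R}$, $\gamma > 0$,
\[
\sum_{k \geq 1} |\widehat{I}_K^\pm(k)|^{\alpha}\, k^{\beta} q^{-\gamma k} = O(1)
\]
uniformly in $K$. Applying the bound above, this is bounded by $(\pi+1)^{\alpha} \sum_{k \geq 1} k^{\beta - \alpha} q^{-\gamma k}$, and since $q \geq p \geq 3 > 1$, the series $\sum_{k\geq 1} k^{\beta-\alpha} q^{-\gamma k}$ converges to a finite value depending only on $\alpha, \beta, \gamma, q$ — the exponential decay $q^{-\gamma k}$ swamps any fixed polynomial growth $k^{\beta-\alpha}$. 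Taking the product over $i = 1, \dots, r$ gives the claim, with implied constant depending on $r$, the $\alpha_i, \beta_i, \gamma_i$, and $q$, but not on $K$ or $\mathcal{I}$.

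The second statement is handled the same way: using $|\widehat{I}_K^\pm(2k)| \leq (\pi+1)/(2k) \leq (\pi+1)/k$, the sum is bounded by $(\pi+1)^{\alpha_1+\alpha_2} \sum_{k \geq 1} k^{\beta - \alpha_1 - \alpha_2} q^{-\gamma k} = O(1)$ by the same convergence. I do not anticipate any real obstacle here; the only point requiring a moment's care is making sure the estimate is genuinely uniform in $K$, which it is because the bound from \textbf{(e)} is uniform in $K$ and the tail beyond $k = K$ contributes nothing. One should also note, as the paper already remarks, that for fixed $K$ these are finite sums, so convergence is not even at issue for a single $K$ — the content is purely the uniform boundedness of the partial sums, which follows from comparison with the convergent series $\sum_k k^{\beta-\alpha} q^{-\gamma k}$.
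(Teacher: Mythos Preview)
Your proposal is correct and follows essentially the same approach as the paper: bound $|\widehat{I}_K^\pm(k)|$ uniformly in $K$ using property \textbf{(e)}, then appeal to the convergence of $\sum_{k\geq 1} k^{\beta} q^{-\gamma k}$ and factor the multi-index sum. The paper is in fact even cruder---it simply uses that $|\widehat{I}_K^\pm(k)|$ is bounded by an absolute constant (so $|\widehat{I}_K^\pm(k)|^{\alpha}\ll 1$) rather than your sharper $(\pi+1)/k$ bound, which is immaterial since the exponential decay already forces convergence; note also that in your displayed chain the intermediate ``$\leq |\mathcal{I}| + \pi/k$'' should read ``$\leq 1/k + \pi/k$'' (using $1/(K+1)\leq 1/k$ for $k\leq K$), though your final bound is correct.
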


\begin{proof} Since $\left|\widehat{I}_K^\pm(k)\right|\leq \frac{1}{K+1}+\min\left\{|\mathcal{I}|, \frac{\pi}{|k|}\right\}$, we obtain
\begin{eqnarray*}
&&\left|\sum_{k_1,\dots,k_r \geq 1} {\widehat{I}_K^\pm(k_1)}^{\alpha_1} \dots {\widehat{I}_K^\pm(k_r)}^{\alpha_r}k_1^{\beta_1}\dots k_r^{\beta_r}q^{-\gamma_1k_1-\dots-\gamma_r k_r}\right|
\ll \sum_{k_1,\dots,k_r\geq 1} k_1^{\beta_1}\dots k_r^{\beta_r}q^{-\gamma_1k_1-\dots-\gamma_r k_r}
\end{eqnarray*}
Since $\sum_{k\geq 1} k^\beta q^{-\gamma k}=O(1)$ for $q>1$ and $\gamma>0$, we get that the right hand side above is also equal to $O(1).$
The second equation is a particular form of the more general equation established above.
\end{proof}

\section{First Moment} \label{1mom}

Recall that $N_{\mathcal I} (f,\psi)$ denotes the number of angles $\thet$ of the zeroes of the $L$-function $L(u,f,\psi)$ in the interval $\mathcal I \subset [-1/2, 1/2)$ of length $0<|\mathcal I |<1.$

From now on, for a function $\phi:\mathcal{F}_d' \rightarrow\mathbb C$, we denote its average by  \[\left<\phi(f)\right>:=\frac{1}{|\mathcal{F}_d'|}
\sum_{f \in \mathcal{F}_d'}\phi(f).\]

We want to compute the first moment
\begin{eqnarray*}
\left< N_{\mathcal I}(f,\psi)  \right> = \frac{1}{|\mathcal{F}_d'|}
\sum_{f \in \mathcal{F}_d'} N_{\mathcal I}(f,\psi) .
\end{eqnarray*}
We will do so by proving the following result.
\begin{thm}  \label{averageET} As $d \rightarrow \infty$,
\begin{eqnarray*}
\left< N_{\mathcal I}(f,\psi) - (d-1) |\mathcal{I}| \right> = O(1).
\end{eqnarray*}
\end{thm}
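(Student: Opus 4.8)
The plan is to bound $N_{\mathcal I}(f,\psi)$ by the Beurling--Selberg polynomials and then average the resulting sums of exponential sums over the family $\mathcal F_d'$. More precisely, by property \textbf{(b)} of Section \ref{trigapprox} we have $I_K^-(\theta) \leq \chi_{\mathcal I}(\theta) \leq I_K^+(\theta)$, so summing over the eigenangles gives
\[
\sum_{j=1}^{d-1} I_K^-(\thet) \;\leq\; N_{\mathcal I}(f,\psi) \;\leq\; \sum_{j=1}^{d-1} I_K^+(\thet).
\]
Applying the explicit formula of Lemma \ref{Explicit-Formula} to the trigonometric polynomial $h = I_K^\pm$, and using that the $I_K^\pm$ are even (property \textbf{(d)}), the right- and left-hand sides become
\[
(d-1)\widehat{I}_K^\pm(0) - \sum_{k=1}^{K} \frac{\widehat{I}_K^\pm(k)\bigl(S_k(f,\psi) + S_k(f,\bar\psi)\bigr)}{q^{k/2}}.
\]
Then I would subtract $(d-1)|\mathcal I|$; by property \textbf{(c)} (or \textbf{(e)} at $k=0$) we have $\widehat{I}_K^\pm(0) = |\mathcal I| \pm \frac{1}{K+1}$, so $(d-1)(\widehat{I}_K^\pm(0) - |\mathcal I|) = O(d/(K+1))$, which is $O(1)$ once we choose $K$ of size comparable to $d$ (say $K = d-1$, so the Beurling--Selberg polynomials have maximal allowed degree and the sums are genuinely finite).

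The crux is therefore to show that
\[
\left\langle \sum_{k=1}^{K} \frac{\widehat{I}_K^\pm(k)\bigl(S_k(f,\psi) + S_k(f,\bar\psi)\bigr)}{q^{k/2}} \right\rangle = O(1),
\]
and here I would interchange the (finite) sum over $k$ with the average over $f$, reducing matters to understanding $\langle S_k(f,\psi)\rangle = \sum_{x \in \F_{q^k}} \langle \psi(\tr_k(f(x)))\rangle$ for each $k \leq K$. The key input is the fact, recorded in Section \ref{a-s}, that for $d$ large relative to $q^k$ (specifically once $d - \lfloor d/p\rfloor > q^k$, which holds for all $k \leq K$ in the relevant range since $K \asymp \log_q d$ would suffice — but even $K \asymp d$ is fine for $k$ up to roughly $\log_q d$; for larger $k$ one argues separately) the values $f(x)$, as $f$ ranges over $\mathcal F_d'$ and for fixed $x \in \F_{q^k}$, are equidistributed, so that $\langle \psi(\tr_k(f(x)))\rangle$ is $0$ unless $x$ lies in the kernel of $\tr_k$, with a small main term otherwise. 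Using the map $\mu: \mathcal F_d \to \mathcal F_d'$ of \eqref{map-mu}, which is $q^{\lfloor d/p\rfloor}$-to-one and trace-preserving, it is cleaner to average over the full family $\mathcal F_d$ where the coefficients are genuinely free, compute $\langle \psi(\tr_k(f(x)))\rangle_{\mathcal F_d}$ by orthogonality of additive characters, and obtain a bound of the form $\langle S_k(f,\psi)\rangle \ll q^{k/2}$ uniformly (this is the trivial-plus-diagonal estimate; one does not even need square-root cancellation here, only that the average is $O(q^{k/2})$ times something summable). Feeding this back, $\sum_{k=1}^K |\widehat{I}_K^\pm(k)| \, q^{-k/2} \cdot q^{k/2} \cdot (\text{summable factor}) = O(1)$ by property \textbf{(e)} (which gives $|\widehat{I}_K^\pm(k)| \leq \frac{1}{K+1} + \min\{|\mathcal I|, \pi/k\}$) together with the geometric decay, exactly as in Proposition \ref{propmanysums}.

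The main obstacle I anticipate is being careful about the range of $k$: the clean equidistribution of $\{f(x) : f \in \mathcal F_d'\}$ only holds when $q^k$ is small compared to $d$, so for $k$ beyond roughly $\log_q d$ one cannot compute $\langle S_k(f,\psi)\rangle$ exactly and must instead fall back on the Weil bound $|S_k(f,\psi)| \leq (d-1)q^{k/2}$ (valid termwise for every $f$), which contributes $\sum_{k} |\widehat{I}_K^\pm(k)| (d-1) \ll d \sum_k \min\{|\mathcal I|, \pi/k\}/(K+1) + \ldots$ — this is where the choice of $K$ matters and where one must check the tail genuinely stays $O(1)$; in fact for the mesoscopic regime $|\mathcal I| \to 0$ one should track the dependence on $|\mathcal I|$ and confirm that $d|\mathcal I| \to \infty$ is what is needed (paralleling Proposition 5.1 of \cite{fr}). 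Once the range issue is handled, combining the upper bound via $I_K^+$ and the lower bound via $I_K^-$ sandwiches $\langle N_{\mathcal I}(f,\psi) - (d-1)|\mathcal I|\rangle$ between two quantities that are each $O(1)$, which is the claim.
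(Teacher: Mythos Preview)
Your overall strategy---sandwich $N_{\mathcal I}(f,\psi)$ by Beurling--Selberg sums, apply the explicit formula of Lemma~\ref{Explicit-Formula}, and average the resulting character sums over $\mathcal F_d'$---is exactly the paper's approach. But your handling of $\langle S_k(f,\psi)\rangle$ rests on a misconception that manufactures a fake obstacle and a workaround that would actually fail.

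For a \emph{single fixed} $\alpha\in\F_{q^k}$ of degree $u\le k$ over $\F_q$, the evaluation map $f\mapsto f(\alpha)$ from $\mathcal F_d$ to $\F_{q^u}$ is exactly $(q-1)q^{d-u}$-to-one as soon as $d>u$; hence the average $\langle\psi(\tr_k f(\alpha))\rangle$ is computed \emph{exactly} for every $k<d$ (this is Lemma~\ref{lem:avg}). There is no requirement that $q^k$ be small compared to $d$: the condition $d-\lfloor d/p\rfloor>q^k$ you quote from Section~\ref{a-s} concerns the \emph{joint} distribution of the values $f(\alpha)$ over all $\alpha\in\F_{q^k}$ simultaneously, which is irrelevant here. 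Corollary~\ref{cor:moment1} then gives the precise value $\langle q^{-k/2}S_k(f,\psi)\rangle=e_{p,k}\,q^{-(1/2-1/p)k}$ for all $k<d$, and this geometric decay is your unnamed ``summable factor''. Taking $K=cd$ with any $c<1$ keeps every $k\le K$ in range and makes $(d-1)/(K+1)=O(1)$; then $\langle S^\pm(K,f,\psi)\rangle=2\sum_{k\le K}\widehat I_K^\pm(k)\,e_{p,k}\,q^{-(1/2-1/p)k}=O(1)$ directly, with no splitting of ranges.

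Your proposed fallback to the Weil bound for $k>\log_q d$ is therefore unnecessary, and in fact it does not work as you sketch it: the pointwise bound $|S_k(f,\psi)|\le(d-1)q^{k/2}$ gives a tail of size $(d-1)\sum_{\log_q d<k\le K}|\widehat I_K^\pm(k)|$, which with $K\asymp d$ is of order $d\log d$, not $O(1)$. The ``range issue'' is not real; once you invoke Lemma~\ref{lem:avg} with its correct hypothesis $k<d$, the proof is finished.
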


\begin{rem} Recall that in Theorem \ref{boundN_I} we showed that
$$N_{\mathcal I}(f,\psi) - (d-1) |\mathcal{I}| = O\left(\frac{d}{\log d}\right).$$ Theorem \ref{averageET}, on the other hand, gives us a far better estimate for the average of $\left< N_{\mathcal I}(f,\psi) - (d-1) |\mathcal{I}| \right>$ than we could have derived from Theorem \ref{boundN_I}.
\end{rem}

For the proof of Theorem \ref{averageET}, we will use the Beurling-Selberg approximation of the characteristic function of the interval $\mathcal I.$
By property {\bf (b)} of the Beurling-Selberg polynomials,
$$
\sum_{j=1}^{d-1} I_K^{-}(\theta_j(f,\psi))  \leq N_\mathcal{I}(f, \psi) \leq \sum_{j=1}^{d-1} I_K^{+}(\theta_j(f,\psi)) .
$$

With the explicit formula of Lemma \ref{Explicit-Formula} and property {\bf (c)},
we write
\begin{eqnarray*}
\sum_{j=1}^{d-1} I_K^{\pm}(\theta_j(f,\psi))  &=& (d-1) |\mathcal{I}| - S^{\pm}(K, f, \psi) \pm \frac{d-1}{K+1} \\
\end{eqnarray*}
where
\begin{eqnarray} \label{defSK}
S^{\pm}(K, f,\psi) :=  \sum_{k=1}^K \frac{\widehat{I}^\pm_K(k)S_k(f, \psi)+\widehat{I}^\pm_K(-k)S_k(f, \bar{\psi})}{q^{k/2}} . \end{eqnarray}

This gives
\begin{eqnarray} \label{T-estimate}
- S^{-}(K, f,\psi) -\frac{d-1}{K+1}  \leq N_\mathcal{I}(f, \psi) - (d-1)|\mathcal{I}| \leq - S^{+}(K, f,\psi) + \frac{d-1}{K+1}.
\end{eqnarray}

In order to complete the proof it remains to estimate $\langle S^{\pm}(K, f, \psi) \rangle.$ We will need the following results from \cite{entin}.
As we remarked in Section \ref{a-s},
we are using a slightly different description for the family of Artin-Schreier covers since we do not allow twists.
Because of that, our results are slightly simpler than those stated in \cite{entin}. We have also modified the original notation so that it fits the generalization that we pursue in the next sections.

\begin{lem} (\cite{entin}, Lemma 5.2)\label{lem:avg}Let $h$ be an integer, $p\nmid h$.  Assume $k<d$ and $\alpha \in \F_{q^k}.$ Then
\[\left< h \psi(\tr_k f(\alpha))\right>  = \begin{cases} 1 & p\mid k, \, \alpha \in \F_{q^{k/p}},\\
0 & \textrm{otherwise.}
\end{cases} \]
\end{lem}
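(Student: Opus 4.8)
The plan is to follow the argument of \cite{entin} and reduce the statement to a routine orthogonality computation, using that the quantity being averaged depends on $f$ only through the element $\tr_k(f(\alpha))\in\F_p$.

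First I would replace the average over $\mathcal F_d'$ by the average over the full set $\mathcal F_d$ of polynomials of degree $d$. The map $\mu$ of \eqref{map-mu} is uniformly $q^{\lfloor d/p\rfloor}$-to-one onto $\mathcal F_d'$, and since the absolute trace is Frobenius-invariant one has $\tr_k(\gamma^{p^j})=\tr_k(\gamma)$ for every $\gamma\in\F_{q^k}$, so $\tr_k(\mu(f)(\alpha))=\tr_k(f(\alpha))$; hence
\[
\left\langle h\,\psi(\tr_k f(\alpha))\right\rangle_{\mathcal F_d'}=\left\langle h\,\psi(\tr_k f(\alpha))\right\rangle_{\mathcal F_d}.
\]
Writing $f=\sum_{i=0}^d a_iX^i$ and using $\psi(\tr_k(f(\alpha)))=\prod_{i=0}^d\psi(\tr_k(a_i\alpha^i))$, the average over the independent coefficients $a_0,\dots,a_{d-1}\in\F_q$ and $a_d\in\F_q^{*}$ factors into a product of $d+1$ local averages.

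Next I would evaluate each local average through transitivity of the trace, $\tr_k=\tr_{\F_q/\F_p}\circ\tr_{\F_{q^k}/\F_q}$, together with orthogonality of the additive characters of $\F_q$. Putting $c_i:=\tr_{\F_{q^k}/\F_q}(\alpha^i)\in\F_q$, one gets
\[
\frac1q\sum_{a\in\F_q}\psi\bigl(\tr_k(a\alpha^i)\bigr)=\frac1q\sum_{a\in\F_q}\psi\bigl(\tr_{\F_q/\F_p}(ac_i)\bigr)=\begin{cases} 1 & c_i=0,\\ 0 & c_i\neq0, \end{cases}\qquad 0\le i\le d-1,
\]
while the leading coefficient contributes a factor $1$ if $c_d=0$ and $-1/(q-1)$ if $c_d\neq0$. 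Hence the whole average vanishes unless $c_i=0$ for every $i=0,1,\dots,d-1$, and when it does not vanish its value is the constant stated on the right (and $c_d=0$ then holds automatically from the conclusion below, so that factor is $1$). Since $c_0=\tr_{\F_{q^k}/\F_q}(1)=k\cdot1_{\F_q}$, the condition $c_0=0$ already forces $p\mid k$.

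The easy direction is then immediate: if $p\mid k$ and $\alpha\in\F_{q^{k/p}}$, then every $\alpha^i$ lies in $\F_{q^{k/p}}$ and $\tr_{\F_{q^k}/\F_{q^{k/p}}}(\alpha^i)=[\F_{q^k}:\F_{q^{k/p}}]\cdot\alpha^i=p\alpha^i=0$, so $c_i=0$ for all $i$ and the average takes the stated nonzero value. The crux is the converse: assuming $c_i=0$ for $0\le i\le d-1$, deduce $\alpha\in\F_{q^{k/p}}$. Let $m$ be the degree of $\alpha$ over $\F_q$, so $m\mid k$ and, because $k<d$, also $m\le k\le d-1$; in particular $c_0=c_1=\dots=c_{m-1}=0$. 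For $x\in\F_{q^m}$ we have $\tr_{\F_{q^k}/\F_q}(x)=(k/m)\,\tr_{\F_{q^m}/\F_q}(x)$. If $p\mid k/m$ then $\F_{q^m}\subseteq\F_{q^{k/p}}$ and we are done; otherwise $k/m$ is a unit in $\F_q$, so $\tr_{\F_{q^m}/\F_q}$ vanishes on $1,\alpha,\dots,\alpha^{m-1}$, which is an $\F_q$-basis of $\F_{q^m}$, forcing $\tr_{\F_{q^m}/\F_q}\equiv0$ and contradicting surjectivity of the trace. This case distinction — together with verifying that the hypothesis $k<d$ supplies precisely the vanishing conditions $c_0,\dots,c_{m-1}$ needed for the basis argument — is the only delicate point; everything else is a standard character-sum computation.
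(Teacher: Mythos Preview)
Your proof is correct but takes a different route from the paper. After the same reduction to $\mathcal F_d$ via $\mu$, the paper does \emph{not} factor coefficient-by-coefficient; instead it observes that for $u=[\F_q(\alpha):\F_q]$ the evaluation $f\mapsto f(\alpha)$ is uniformly $(q-1)q^{d-u}$-to-one from $\mathcal F_d$ onto $\F_{q^u}$ (since $u\le k<d$), so $f(\alpha)$ is equidistributed on $\F_{q^u}$. Then $\tr_k f(\alpha)=(k/u)\,\tr_u f(\alpha)$: if $p\nmid k/u$ this is uniform on $\F_p$ and the character average vanishes; if $p\mid k/u$ then automatically $\alpha\in\F_{q^{k/p}}$, which is the first case (handled by the direct computation $\tr_k=p\,\tr_{k/p}=0$). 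This replaces your ``crux'' --- the basis argument showing the $c_i$ cannot all vanish when $p\nmid k/m$ --- by the one-line fact that $\tr_u:\F_{q^u}\to\F_p$ is surjective, and it avoids the bookkeeping around the $a_d$-factor altogether. Your monomial factorization buys an explicit product formula and makes the trace conditions $c_i=0$ visible, at the cost of a longer argument. (Minor note: the displayed statement has a typo, $h\,\psi(\cdots)$ for $\psi(h\,\tr_k\cdots)$; your write-up effectively treats $h=1$, but the same computation goes through verbatim with $\psi$ replaced by the nontrivial character $\psi^h$.)
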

\begin{proof}
If $p \mid k$ and $\alpha \in \F_{q^{k/p}}$ then $\tr_k(f(\alpha))=p \tr_{\frac{k}{p}}(f(\alpha))=0$
so $\left< \psi(\tr_k f(\alpha))\right>=1$. For the remaining case we first note that the average is the same if
we average over the family $\mathcal F_d$ of degree $d$ polynomials (without the condition $a_{pk}=0$).
This is due to the existence of the map $\mu$ defined by (\ref{map-mu}).

Denote by $u$ the degree of $\alpha$ over $\F_q$. Since $u\leq k<d$ the  map
\[
\tau : \mathcal F_d\rightarrow \F_{q^{u}}
\]
defined by $\tau(f)=f(\alpha)$ is $(q-1)q^{d-u}$-to-one. Thus as $f$ ranges over $\mathcal F_d$, $f(\alpha)$ takes each value
in $\F_{q^u}$ an equal number of times. Since $p\nmid \frac{k}{u}$, $\tr_k(f(\alpha))=\frac{k}{u}\tr_u(f(\alpha))$ also takes every value in $\F_p$ the same number of times as $f$ ranges over $\mathcal F_d$ and the same is true for $h\tr_k(f(\alpha))$. Thus each $p$th root of unity occurs the same number of times in $\psi(h\tr_k(f(\alpha)))$ as $f$ ranges over $\mathcal F_d$ and so the average is $0$.
\end{proof}

The lemma has the following consequence.
 \begin{cor} (\cite{entin}, Corollary 5.3) \label{cor:moment1}
Let $h$ be an integer, $p\nmid h$.   Assume $k <d$ and set
\[M^{k,1,h}_{1,d}:= \left<q^{-k/2} \sum_{\alpha \in \F_{q^k}} \psi(h \tr_k f(\alpha))\right>.\] Then \[M^{k,1,h}_{1,d}= e_{p,k}q^{-(1/2-1/p)k},\]
where
\[e_{p,k}=\begin{cases} 0 & p\nmid k, \\
1 & p\mid k. \end{cases}\]
\end{cor}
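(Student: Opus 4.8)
The plan is to deduce Corollary \ref{cor:moment1} directly from Lemma \ref{lem:avg} by linearity of the average. First I would write
\[
M^{k,1,h}_{1,d} = \left<q^{-k/2} \sum_{\alpha \in \F_{q^k}} \psi(h\tr_k f(\alpha))\right>
= q^{-k/2} \sum_{\alpha \in \F_{q^k}} \left<\psi(h\tr_k f(\alpha))\right>,
\]
where the interchange of the finite sum over $\alpha \in \F_{q^k}$ with the average over $f \in \mathcal{F}_d'$ is justified since both are finite sums. Since $p\nmid h$, Lemma \ref{lem:avg} applies to each inner average: $\left<h\psi(\tr_k f(\alpha))\right> = h\left<\psi(\tr_k f(\alpha))\right>$, but actually we want $\left<\psi(h\tr_k f(\alpha))\right>$ directly, which the same argument in the proof of Lemma \ref{lem:avg} gives (the proof shows that $h\tr_k(f(\alpha))$ equidistributes over $\F_p$ in the non-degenerate case, hence so does the character value; and in the degenerate case $p\mid k$, $\alpha \in \F_{q^{k/p}}$ it equals $1$). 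So $\left<\psi(h\tr_k f(\alpha))\right>$ equals $1$ when $p\mid k$ and $\alpha \in \F_{q^{k/p}}$, and $0$ otherwise.

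Next I would evaluate the resulting sum. If $p\nmid k$, every inner average vanishes, so $M^{k,1,h}_{1,d}=0$, matching $e_{p,k}=0$. If $p\mid k$, the inner average is $1$ exactly for the $\alpha$ lying in the subfield $\F_{q^{k/p}}\subseteq \F_{q^k}$, of which there are precisely $q^{k/p}$. Hence
\[
M^{k,1,h}_{1,d} = q^{-k/2}\cdot q^{k/p} = q^{-(1/2-1/p)k} = e_{p,k}\,q^{-(1/2-1/p)k},
\]
which is the claimed formula.

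The argument is essentially bookkeeping once Lemma \ref{lem:avg} is in hand, so there is no serious obstacle; the only mild subtlety is making sure that the hypothesis $k<d$ needed in Lemma \ref{lem:avg} is exactly the hypothesis assumed here, which it is, and that we may pass between averaging over $\mathcal{F}_d'$ and over $\mathcal{F}_d$ via the $q^{\lfloor d/p\rfloor}$-to-one trace-preserving map $\mu$ of \eqref{map-mu} — this is already invoked inside the proof of Lemma \ref{lem:avg}, so nothing new is required here.
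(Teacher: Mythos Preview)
Your proposal is correct and is exactly the intended deduction: the paper does not spell out a proof of this corollary, presenting it as an immediate consequence of Lemma~\ref{lem:avg}, and your argument supplies precisely that routine computation. Your observation that the statement of Lemma~\ref{lem:avg} should read $\left<\psi(h\tr_k f(\alpha))\right>$ rather than $\left<h\psi(\tr_k f(\alpha))\right>$ is also well taken; the proof of that lemma indeed establishes the former, which is what is needed here.
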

We also denote
\[M^{k,-1,h}_{1,d}:= \left<q^{-k/2} \sum_{\alpha \in \F_{q^k}}  \psi(- h \tr_k f(\alpha))\right>.\]
Clearly, $M^{k,-1,h}_{1,d}=\overline{M^{k,1,h}_{1,d}}.$

Notice that changing $h$ allows us to vary the character from $\psi$ to $\psi^h$. This will be useful later. 

\begin{proof}(Theorem \ref{averageET})
We have that
\begin{eqnarray*}
\left<  S^{\pm}(K, f, \psi) \right> &=&  \sum_{k=1}^K \frac{\widehat{I}^\pm_K(k)\left<S_k(f,\psi) \right>+\widehat{I}^\pm_K(-k)\left<S_k(f,\bar{\psi})\right>}{q^{k/2}}\\
&=&  \sum_{k=1}^K \widehat{I}^\pm_K(k)M^{k,1,1}_{1,d}+\widehat{I}^\pm_K(-k)M^{k,-1,1}_{1,d}\\
&= &  2\sum_{k=1}^K \widehat{I}^\pm_K(k)e_{p,k}q^{-(1/2-1/p)k}\\
\end{eqnarray*}
and the result follows from property {\bf (e)} and \eqref{T-estimate} taking $K=cd$ with $c<1$.
\end{proof}

\begin{rem} \label{Chantal's favorite nonconstant} We denote by
\[C(K):=\sum_{k=1}^K \widehat{I}^\pm_K(k)e_{p,k}q^{-(1/2-1/p)k}\]
and
\[
C:=\sum_{k=1}^\infty \frac{\sin(\pi k |\mathcal I|)}{\pi k} e_{p,k} q^{-(1/2-1/p)k}.
\]
These terms will reappear in the computation of the higher moments. Note that, since $p>2,$ the above infinite series converges absolutely. By Proposition \ref{propmanysums}, $C(K)=O(1)$. By property {\bf (e)} of the Beurling-Selberg polynomials, $C=C(K) + O(1/K)$.
\end{rem}

\section{Second moment} \label{2mom}
Let
\begin{eqnarray}
S^\pm(K,C_f)=\sum_{h=1}^{p-1}S^{\pm}(K, f, \psi^h),
\end{eqnarray}
where $S^{\pm}(K, f, \psi)$ is defined in \eqref{defSK}.

In the next sections, we are computing the moments of $S^{\pm}(K, C_f)$. We show that they fit the Gaussian moments when properly normalized
(Theorem \ref{thm:sumisgaussian}). We will then use this result to show that 
$$ \frac{N_{\mathcal{I}}(C_f)
- (p-1)(d-1) |\mathcal{I}|}{\sqrt{\frac{2(p-1)}{\pi^2} \log(d |\mathcal{I}|)}}$$
converges to a normal distribution as $d \rightarrow \infty$ since
it converges in mean square to $$\frac{S^{\pm}(K, C_f)}{{\sqrt{\frac{2(p-1)}{\pi^2} \log(d |\mathcal{I}|)}}}.$$

The following lemma is a generalization of Lemma 6.2 in \cite{entin}, that also takes into account the difference in our family of Artin-Schreier covers.

Recall that $\psi^j(\alpha)=\psi(j\alpha)$ for $\alpha \in \F_p$.  We have the following 

\begin{lem} \label{lemmaindependence}
Fix $h_1, h_2$ such that $p\nmid h_1 h_2$ and let $e_1, e_2 \in \{-1,1 \}$. Assume $k_1,k_2 > 0$, $k_1+k_2 < d$. Let $\alpha_1 \in \F_{q^{k_1}}$, $\alpha_2 \in \F_{q^{k_2}}$
with monic minimal polynomials $g_1,g_2$ of degrees $u_1, u_2$ over $\F_q$ respectively.
 We have
\begin{eqnarray*}
\left< \psi(e_1 h_1\tr_{k_1} f(\alpha_1)+e_2h_2\tr_{k_2} f(\alpha_2)) \right> &=&\begin{cases}
1, & \textrm{$g_1=g_2$, $p \mid \frac{{e_1 h_1 k_1}+{e_2 h_2 k_2}}{u_1}$, $p \nmid \frac{k_1k_2}{u_1u_2}$} \\ 
& \textrm{or $p\mid \left(\frac{k_1}{u_1}, \frac{k_2}{u_2}\right)$}; \\ 
0, & \textrm{otherwise.} \end{cases}
\end{eqnarray*}
\end{lem}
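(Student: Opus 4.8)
The plan is to reduce the joint average to a product of local averages at each irreducible polynomial dividing $g_1$ or $g_2$, exactly as in the one-variable case treated in Lemma~\ref{lem:avg}. First I would use the map $\mu$ of \eqref{map-mu} to replace the average over $\mathcal{F}_d'$ by the average over the full family $\mathcal{F}_d$, which is legitimate because $\mu$ preserves $\tr$ of $f$ evaluated at any point of an extension, and because the averages of the relevant character sums are finite (cf. the argument in Lemma~\ref{lem:avg}). Next I would write $\tr_{k_i} f(\alpha_i) = \frac{k_i}{u_i}\,\tr_{u_i} f(\alpha_i)$, so that the quantity inside the character becomes $\psi\bigl(\frac{e_1 h_1 k_1}{u_1}\tr_{u_1} f(\alpha_1) + \frac{e_2 h_2 k_2}{u_2}\tr_{u_2} f(\alpha_2)\bigr)$, where now $\alpha_i$ has degree exactly $u_i$ over $\F_q$.

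The key structural observation is that $\tr_{u_i} f(\alpha_i)$ depends only on the $\F_q$-vector of values $f$ takes on the Galois orbit of $\alpha_i$, i.e. on $f \bmod g_i$. Since $u_1 + u_2 \le k_1 + k_2 < d$, the evaluation map $\mathcal{F}_d \to \F_q[X]/(g_1 g_2)$ (when $g_1 \ne g_2$), respectively $\mathcal{F}_d \to \F_q[X]/(g_1)$ (when $g_1 = g_2$), is surjective and uniformly fibered, by the same counting as in Lemma~\ref{lem:avg}. So the average factors: when $g_1 \ne g_2$ it is the product of two independent one-variable averages, each of the form $\left<\psi(c_i \tr_{u_i} f(\alpha_i))\right>$ with $c_i = \frac{e_i h_i k_i}{u_i} \bmod p$; by Lemma~\ref{lem:avg} this product is $1$ if $p \mid c_1$ and $p \mid c_2$ (equivalently $p \mid \frac{k_1}{u_1}$ and $p \mid \frac{k_2}{u_2}$, since $p \nmid e_i h_i$) and $0$ otherwise. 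When $g_1 = g_2$ (forcing $u_1 = u_2 =: u$ and the orbits to coincide), $f(\alpha_1)$ and $f(\alpha_2)$ are Galois-conjugate values, so $\tr_u f(\alpha_1) = \tr_u f(\alpha_2)$, and the exponent collapses to $\psi\bigl((c_1 + c_2)\tr_u f(\alpha_1)\bigr)$ with $c_1 + c_2 \equiv \frac{e_1 h_1 k_1 + e_2 h_2 k_2}{u} \pmod p$; Lemma~\ref{lem:avg} then gives $1$ when $p \mid (c_1+c_2)$ and $0$ otherwise, but one must separately keep the sub-case where $p$ already divides both $\frac{k_1}{u}$ and $\frac{k_2}{u}$ individually (this is the ``or'' clause), since then the exponent vanishes outright regardless of the sum. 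Assembling these cases, after noting $u_1 = u_2$ whenever $g_1 = g_2$, yields precisely the stated dichotomy.

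The main obstacle is the bookkeeping in the case $g_1 = g_2$: one has to be careful that $\alpha_1$ and $\alpha_2$, while having the same minimal polynomial, may be distinct Galois conjugates, so that the identification $\tr_{u} f(\alpha_1) = \tr_{u} f(\alpha_2)$ must be justified via the Frobenius action (it holds because the absolute trace is Frobenius-invariant), and then one must correctly combine $\frac{e_1 h_1 k_1}{u}$ and $\frac{e_2 h_2 k_2}{u}$ modulo $p$ and compare the ``$p$ divides the sum'' condition against the stronger ``$p$ divides each'' condition to get exactly the two listed alternatives. A secondary technical point is verifying that the fibers of the evaluation map $\mathcal{F}_d \to \F_q[X]/(g_1 g_2)$ really are all of equal size, which requires $\deg(g_1 g_2) = u_1 + u_2 \le d$ together with the leading-coefficient-nonzero constraint; this is a direct generalization of the $(q-1)q^{d-u}$-to-one count in the proof of Lemma~\ref{lem:avg} and should go through with only minor changes, distinguishing whether $u_1 + u_2 = d$ or $< d$.
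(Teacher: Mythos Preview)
Your proposal is correct and follows essentially the same approach as the paper: reduce to $\mathcal{F}_d$ via the map $\mu$, use the identity $\tr_{k_i} f(\alpha_i)=\tfrac{k_i}{u_i}\tr_{u_i} f(\alpha_i)$, exploit the equidistribution of the evaluation map $\mathcal{F}_d\to \F_q[X]/(g_1g_2)$ (CRT when $g_1\neq g_2$), and collapse via $\tr_{u} f(\alpha_1)=\tr_{u} f(\alpha_2)$ when $g_1=g_2$, finishing with Lemma~\ref{lem:avg}. The only organizational difference is that the paper first disposes of the case $p\mid \tfrac{k_i}{u_i}$ (where $\tr_{k_i} f(\alpha_i)$ vanishes in $\F_p$) before treating $g_1\neq g_2$ and $g_1=g_2$, whereas you fold that case into the factorized/collapsed computation; also note that $u_1+u_2\le k_1+k_2<d$ is strict, so your worry about the boundary case $u_1+u_2=d$ never arises.
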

\begin{proof}
If $p \mid \frac{k_2}{u_2}$ then $\tr_{k_2} f(\alpha_2)=p\tr_{{\frac{k_2}{p}}}f(\alpha_2)=0$, so
\[
\left< \psi(e_1 h_1\tr_{k_1} f(\alpha_1) +e_2 h_2 \tr_{k_2} f(\alpha_2)) \right> =\left< \psi( e_1 h_1 \tr_{k_1} f(\alpha_1)) \right>.
\]
By Lemma \ref{lem:avg}, this equals $0$ if $p \nmid \frac{k_1}{u_1}$ and $1$ if $p\mid \frac{k_1}{u_1}$ as $p \nmid e_1h_1$.

The only remaining case is when $p \nmid \frac{k_1k_2}{u_1u_2}$. We first suppose that $g_1 \neq g_2$.
We note that we will have the same value if we average over $\mathcal F_d$ rather than $\mathcal F_d'$  due to the existence of the map $\mu$ defined by (\ref{map-mu}). Since $u_1+u_2\leq {k_1}+{k_2} <d$, the map
\[
\tau : \mathcal F_d \rightarrow \F_q[X]/(g_1g_2) \simeq \F_{q^{u_1}}\times \F_{q^{u_2}}
\]
is exactly $(q-1)q^{d-{u_1-u_2}}$-to-one. Hence as $f$ ranges over $\mathcal F_d$, $(f(\alpha_1), f(\alpha_2))$ takes every value in $\F_{q^{u_1}}\times \F_{q^{u_2}}$ the same number of times. Now, since $p\nmid  \frac{{e_1 h_1 k_1}}{u_1}$ and $p \nmid \frac{{e_2 h_2 k_2}}{u_2}$, 
\[(\tr_{k_1} f(\alpha_1), \tr_{k_2} f(\alpha_2))=\left(\frac{{e_1h_1k_1}}{u_1}\tr_{u_1}(f(\alpha_1)), \frac{{e_2h_2k_2}}{u_2}\tr_{u_2}(f(\alpha_2))\right)\] also takes every value in $\F_p\times \F_p$
 the same number of times as $f$ ranges over $\mathcal F_d$. Then
\begin{eqnarray*}
&&\psi\left(e_1h_1\tr_{{k_1}}(f(\alpha_1))+e_2h_2  \tr_{{k_2}} (f(\alpha_2))\right)= \\&=&\psi\left(e_1h_1\frac{{k_1}}{u_1}\tr_{u_1}(f(\alpha_1))+e_2h_2\frac{{k_2}}{u_2} \tr_{u_2} (f(\alpha_2))\right)
\end{eqnarray*}
assumes every $p$th root of unity equally many times as we average over $\mathcal F_d$ and so the average is $0$.

If $g_1=g_2$, then  $\alpha_1$ and $\alpha_2$ are conjugates over $\F_q$ and so are $f(\alpha_1)$ and  $f(\alpha_2).$ Then
$\tr_{u_1} f(\alpha_1)=\tr_{u_1} f(\alpha_2)$. This implies 
\begin{eqnarray*}
e_1h_1\tr_{k_1} f(\alpha_1)+e_2h_2\tr_{k_2} f(\alpha_2) &=& e_1h_1\frac{{k_1}}{u_1}\tr_{u_1} f(\alpha_1) +e_2h_2 \frac{{k_2}}{u_1}\tr_{u_1} f(\alpha_2) = \\ &=& \frac{{e_1h_1k_1}+e_2h_2 {k_2}}{u_1} \tr_{u_1} f(\alpha_1),
\end{eqnarray*} which is
zero when $p\mid \frac{e_1h_1{k_1}+e_2h_2{k_2}}{u_1}$.  If $p$ does not divide $\frac{e_1h_1{k_1}+e_2h_2{k_2}}{u_1}$ then
\[\left< \psi(e_1h_1\tr_{k_1} f(\alpha_1)+e_2h_2 \tr_{k_2} f(\alpha_2)) \right> =
\left< \psi\left(\frac{{e_1h_1k_1}+e_2h_2 {k_2}}{u_1}  \tr_{u_1} f(\alpha_1)\right) \right> =0\] by Lemma \ref{lem:avg}.

\end{proof}

For positive integers $k_1,k_2,h_1,h_2$ with $p\nmid h_1h_2$ and $e_1, e_2 \in \{ -1, 1\}$, let
\begin{eqnarray*}
M_{2,d}^{(k_1,k_2),(e_1,e_2),(h_1,h_2)} &:=& \left<  q^{-(k_1+k_2)/2} \sum_{{\alpha_1 \in \F_{q^{k_1}}}\atop{\alpha_2 \in \F_{q^{k_2}}}}\psi(e_1h_1\tr_{k_1} f(\alpha_1)+e_2h_2 \tr_{k_2} f(\alpha_2)) \right> \\
&=& q^{-(k_1+k_2)/2} \sum_{{\alpha_1 \in \F_{q^{k_1}}}\atop{\alpha_2 \in \F_{q^{k_2}}}}
\left< \psi(e_1h_1\tr_{k_1} f(\alpha_1) +e_2h_2 \tr_{k_2} f(\alpha_2)) \right> .
\end{eqnarray*}
Then we have the following analogue of Theorem 8 in \cite{entin}.
\begin{thm}\label{Mcovariance} Assume ${k_1} \geq {k_2} > 0$ and ${k_1}+{k_2} < d$. Let $0<h_1, h_2 \leq (p-1)/2$.
Then
\begin{eqnarray*}
M_{2,d}^{({k_1},{k_2}),(e_1,e_2),(h_1,h_2)}
&=&\delta_{{k_1},2{k_2}} O \left({k_1} q^{-{k_2}/2}\right) + O \left({k_1} q^{-{k_2}/2-{k_1}/6} + q^{-(1/2 - 1/p)({k_1}+{k_2})}\right)\\
&&+
\begin{cases}
 \delta_{{k_1},{k_2}} {k_1} \left(1+O\left(q^{-{k_1}/2}\right)\right), & (e_1,e_2)=(1,-1), h_1=h_2,\\
 0, & \text{ otherwise,}
\end{cases}
\end{eqnarray*}
where
\begin{eqnarray*}
\delta_{{k_1},{k_2}} = \begin{cases} 1, & {k_1}={k_2}, \\0, & {k_1} \neq {k_2}. \end{cases}
\end{eqnarray*}
\end{thm}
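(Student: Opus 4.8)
The plan is to compute the averaged double character sum
$M_{2,d}^{(k_1,k_2),(e_1,e_2),(h_1,h_2)}$ by inserting the evaluation of
$\left< \psi(e_1h_1\tr_{k_1} f(\alpha_1)+e_2h_2\tr_{k_2} f(\alpha_2)) \right>$ provided by Lemma~\ref{lemmaindependence}, and then carefully counting the pairs $(\alpha_1,\alpha_2)$ that satisfy each of the alternatives there. First I would group the elements of $\F_{q^{k_i}}$ according to the degree $u_i$ of their minimal polynomial over $\F_q$; recall that $u_i \mid k_i$ and that there are roughly $q^{u_i}$ elements of $\F_{q^{k_i}}$ of exact degree $u_i$ (more precisely $\sum_{v\mid u_i}\mu(u_i/v)q^v = q^{u_i}+O(q^{u_i/2})$ of them, a bound I would invoke without rederiving). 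The contribution to $M_{2,d}$ then splits into three pieces matching the three cases in Lemma~\ref{lemmaindependence}: (i) the "both $p$-divisible" case $p\mid(k_1/u_1,k_2/u_2)$, where $\alpha_1,\alpha_2$ range independently over the relevant subfields; (ii) the "conjugate" case $g_1=g_2$ with $p\mid\frac{e_1h_1k_1+e_2h_2k_2}{u_1}$ and $p\nmid \frac{k_1k_2}{u_1u_2}$; and (iii) nothing else.

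For piece (i), the number of $\alpha_i$ with $p\mid k_i/u_i$ is at most the number of elements in subfields $\F_{q^{k_i/p}}\subseteq\F_{q^{k_i}}$ (summed over the $p$-divisible divisors of $k_i$), which is $O(q^{k_i/p})$; after dividing by $q^{(k_1+k_2)/2}$ this gives $O(q^{-(1/2-1/p)(k_1+k_2)})$, one of the error terms in the statement. For piece (ii), since $g_1=g_2$ we need $u_1=u_2=:u$, and then $\alpha_1$ ranges over the $\approx q^u$ elements of exact degree $u$ while $\alpha_2$ is forced to be one of the $u$ conjugates of $\alpha_1$, so the count is $\approx u\cdot q^{u}$; the constraint $p\nmid k_1k_2/u^2$ together with $p\mid\frac{e_1h_1k_1+e_2h_2k_2}{u}$ is where the case analysis on $(e_1,e_2)$ and $(h_1,h_2)$ enters. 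Here I would argue: write $k_i=u m_i$ with $p\nmid m_1m_2$; the divisibility $p\mid e_1h_1m_1+e_2h_2m_2$ must hold. The dominant term arises when this can be met for $u$ as large as possible, i.e. $u=k_2$ (forcing $u\mid k_1$ as well); when additionally $k_1=k_2$ we get $m_1=m_2=1$ and the condition becomes $p\mid e_1h_1+e_2h_2$, which for $0<h_1,h_2\le (p-1)/2$ forces $h_1=h_2$ and $e_1=-e_2$, producing the main term $\delta_{k_1,k_2}k_1(1+O(q^{-k_1/2}))$ (the $O(q^{-k_1/2})$ absorbing the $q^u+O(q^{u/2})$ versus $q^u$ discrepancy and the count of conjugates being exactly $u=k_1$). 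The leftover sub-case $u=k_2<k_1$ contributes $\approx k_2 q^{k_2}/q^{(k_1+k_2)/2}=k_2 q^{-(k_1-k_2)/2}\cdot q^{?}$ — I would check this is $O(k_1 q^{-k_2/2})$ and occurs only when $k_2\mid k_1$; the term $\delta_{k_1,2k_2}O(k_1q^{-k_2/2})$ in the statement is the instance $k_1=2k_2$, which needs separate attention because then $m_1=2$ and $p\mid 2e_1h_1+e_2h_2$ has extra solutions. All smaller values of $u$ (i.e. $u\le k_2/2$, or $u\mid\gcd$ but small) contribute $O(k_1 q^{k_2/2}/q^{(k_1+k_2)/2})=O(k_1 q^{-k_1/2-\cdot})$, which I would bound by $O(k_1 q^{-k_2/2-k_1/6})$ after a mild optimization; the $1/6$ is slack, chosen so the bound is clean and uniform.

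The main obstacle, as I see it, is the bookkeeping in piece (ii): disentangling, for each divisor pair $(u_1,u_2)$ with $u_1=u_2=u$, exactly when the modular condition $p\mid\frac{e_1h_1k_1+e_2h_2k_2}{u}$ is satisfiable and isolating the single configuration $(k_1=k_2, h_1=h_2, e_1=-e_2)$ that yields a main term of size $k_1$ rather than an error term, while simultaneously tracking that the anomalous divisor $u=k_1/2$ (when $k_1=2k_2$) only contributes at the $O(k_1 q^{-k_2/2})$ level. Everything else — the subfield counts, the geometric-series bounds for the $p$-divisible case via Proposition~\ref{propmanysums}-style estimates, and the passage from $\mathcal F_d'$ to $\mathcal F_d$ — is routine given the lemmas already established. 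I would organize the writeup as: (1) reduce to $\mathcal F_d$ and substitute Lemma~\ref{lemmaindependence}; (2) handle case (i) as an $O(q^{-(1/2-1/p)(k_1+k_2)})$ error; (3) in case (ii), stratify by $u$ and by $m_i=k_i/u$, extract the main term at $(u,m_1,m_2)=(k_1,1,1)$, and bound the rest, giving special care to $k_1=2k_2$; (4) collect all error terms.
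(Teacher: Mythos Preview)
Your approach is essentially the same as the paper's: both substitute Lemma~\ref{lemmaindependence}, dispose of the $p\mid(k_1/u_1,k_2/u_2)$ case as the $O(q^{-(1/2-1/p)(k_1+k_2)})$ error, and in the conjugate case stratify by the common minimal-polynomial degree $u$. The paper packages the conjugate count as $\sum_{m}\pi(m)m^2$ with $m=u$, which is your ``$\approx u\cdot q^u$ elements times $u$ conjugates'' after the prime number theorem; the case split $k_1=k_2$, $k_1=2k_2$, and the rest is identical.

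There is one small but genuine gap. For the ``smaller $u$'' contribution you invoke $u\le k_2/2$, obtaining $O(k_1 q^{-k_1/2})$, and then assert this can be weakened to $O(k_1 q^{-k_2/2-k_1/6})$ as ``slack.'' That implication fails in the range $k_2<k_1<3k_2/2$: there $k_1/2<k_2/2+k_1/6$, so your bound is \emph{larger} than the target, not smaller. The paper's fix is the arithmetic observation that if $k_1>k_2$ and $k_1\neq 2k_2$ then $\gcd(k_1,k_2)\le k_1/3$ (write $k_1=ga$, $k_2=gb$ with $\gcd(a,b)=1$, $a>b$; excluding $(a,b)=(2,1)$ forces $a\ge 3$). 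This gives $u\le k_1/3$ directly and hence
\[
q^{-(k_1+k_2)/2}\sum_{m\mid(k_1,k_2)}\pi(m)m^2 = O\bigl(k_1 q^{k_1/3-(k_1+k_2)/2}\bigr)=O\bigl(k_1 q^{-k_2/2-k_1/6}\bigr),
\]
which is exactly the stated error. Replace your $u\le k_2/2$ step with this and the argument goes through.
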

Before  we proceed with the proof, we would like to make a few remarks. In the instances when we apply this result,  we will choose $K=cd$, for $0<c<1/2$, and therefore ${k_1}, {k_2}\leq K$ will imply
that $k_1 + k_2 < d$, and will be able to
apply Theorem \ref{Mcovariance} for all values of $k_1, k_2$ under consideration.
Also note that the condition $k_1\geq k_2>0$ does not restrict the validity of the statement, since
$M_{2,d}^{({k_2},{k_1}),(1,-1),(h_1,h_2)}=\overline{M_{2,d}^{({k_1},{k_2}),(1,-1), (h_2,h_1)}}$.

\begin{proof} From Lemma \ref{lemmaindependence},
\begin{eqnarray*}
&&M_{2,d}^{({k_1},{k_2}),(e_1,e_2),(h_1,h_2)}  = q^{-({k_1}+{k_2})/2}\left(e_{p,e_1h_1{k_1}+e_2h_2{k_2}}\sum_{{{m\mid ({k_1},{k_2})}\atop{mp\nmid {k_1},{k_2}}}\atop{mp \mid (e_1h_1{k_1}+e_2 h_2{k_2})}} \pi(m)m^2 +e_{p,{k_1}}e_{p,{k_2}}q^{({k_1}+{k_2})/p}\right),
\end{eqnarray*}
where $\pi(m)$ denotes the number of monic irreducible polynomials of degree $m$ over $\F_q[X]$. The prime number theorem for function fields (see \cite{rosen}, Theorem 2.2) states that $\pi(m) =\frac{q^m}{m}+O\left(\frac{q^{m/2}}{m}\right).$

When ${k_1}={k_2}$,  the conditions on the summation indices become $m\mid {k_1}$, $mp\nmid {k_1}$, and $mp\mid (e_1h_1+e_2h_2){k_1}$, a contradiction unless $p\mid (e_1h_1+e_2h_2)$. Due to the range in which the $h_1, h_2$ take values, this can only happen when $e_1=-e_2$ and $h_1=h_2$. 
In this case, one gets
\[\sum_{{m\mid {k_1}}\atop{mp\nmid {k_1}}} \pi(m)m^2 = {k_1}q^{k_1}+O\left({k_1}q^{{k_1}/2}\right).\]
On the other hand, when ${k_1}=2{k_2}$, one gets
\[\sum_{{{m\mid {k_2}}\atop{mp\nmid {k_2}}}\atop{mp \mid (2e_1h_1+e_2h_2){k_2}}} \pi(m)m^2=O({k_2}q^{k_2})=O\left({k_1}q^{{k_1}/2}\right).\]
Finally, if ${k_1}>{k_2}$ but ${k_1}\not = 2{k_2}$, we have $({k_1},{k_2})\leq {k_1}/3$ and
\[\sum_{{{m\mid ({k_1},{k_2})}\atop{mp\nmid {k_1},{k_2}}}\atop{mp \mid (e_1h_1{k_1}+e_2h_2{k_2})}} \pi(m)m^2=O\left({k_1} q^{{k_1}/3}\right).\]
This concludes the proof of the theorem.
\end{proof}

Finally, we are able to compute the covariances.
\begin{thm}\label{covariance}Let  $h_1, h_2$ be integers such that $0< h_1, h_2\leq (p-1)/2$. Then for any $K$ with  $\max\{1,1/|\mathcal I|\}<K<d/2$, 
\begin{eqnarray*}
 \left< S^\pm(K, f, \psi^{h_1}) S^\pm(K, f, \psi^{h_2})\right> &=&\left< S^\pm(K, f, \psi^{h_1}) S^\mp(K, f, \psi^{h_2}) \right>
  =\begin{cases}

\displaystyle \frac{1}{\pi^2} \log (K |\mathcal{I}|)+ O\left(1\right), & h_1=h_2
 \\
 &\\
  O\left(1\right), & h_1\neq h_2.
  \end{cases}
\end{eqnarray*}

\end{thm}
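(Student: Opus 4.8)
The plan is to expand the product $S^\pm(K,f,\psi^{h_1})S^\pm(K,f,\psi^{h_2})$ using the definition \eqref{defSK}, so that
\[
S^\pm(K,f,\psi^{h_1})S^\pm(K,f,\psi^{h_2}) = \sum_{k_1,k_2=1}^K \frac{\bigl(\widehat{I}^\pm_K(k_1)S_{k_1}(f,\psi^{h_1})+\widehat{I}^\pm_K(k_1)S_{k_1}(f,\bar\psi^{h_1})\bigr)\bigl(\widehat{I}^\pm_K(k_2)S_{k_2}(f,\psi^{h_2})+\widehat{I}^\pm_K(k_2)S_{k_2}(f,\bar\psi^{h_2})\bigr)}{q^{(k_1+k_2)/2}},
\]
where I have used property \textbf{(d)} to replace $\widehat{I}^\pm_K(-k)$ by $\widehat{I}^\pm_K(k)$. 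Taking the average $\langle\cdot\rangle$ and using that $S_{k}(f,\psi^{h})=\sum_{\alpha\in\F_{q^k}}\psi(h\tr_k f(\alpha))$, every term becomes a coefficient $\widehat{I}^\pm_K(k_1)\widehat{I}^\pm_K(k_2)$ times an $M_{2,d}^{(k_1,k_2),(e_1,e_2),(h_1,h_2)}$ with $e_1,e_2\in\{\pm1\}$. Since $K<d/2$, the hypotheses $k_1+k_2<d$ of Theorem \ref{Mcovariance} are satisfied throughout, so I may substitute its estimate term by term. (For the cross-moment $\langle S^\pm S^\mp\rangle$ the combinatorics of signs is identical; the diagonal contribution still comes from the $(e_1,e_2)=(1,-1)$ pattern, since in $S^-$ the coefficient is still $\widehat{I}^-_K(k)$ and the sign structure of the two $S_k$ factors is unchanged.)

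Next I would sort the resulting double sum into three pieces according to Theorem \ref{Mcovariance}: the main diagonal term $\delta_{k_1,k_2}k_1(1+O(q^{-k_1/2}))$ that appears only when $(e_1,e_2)=(1,-1)$ and $h_1=h_2$; the ``$k_1=2k_2$'' term of size $O(k_1 q^{-k_2/2})$; and the uniform error $O(k_1 q^{-k_2/2-k_1/6}+q^{-(1/2-1/p)(k_1+k_2)})$. For the main term, when $h_1=h_2$ each of the two choices of which $S_k$ is conjugated that realizes the $(1,-1)$ pattern contributes $\sum_{k=1}^K \widehat{I}^\pm_K(k)^2 k(1+O(q^{-k/2}))$, and there are exactly two such ordered choices, giving $2\sum_{k\ge1}\widehat{I}^\pm_K(k)^2 k + O(1)$ (the error using Proposition \ref{propmanysums}); by Proposition \ref{propFR} this is $\frac{1}{\pi^2}\log(K|\mathcal I|)+O(1)$. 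For the mixed-sign cross-moment I would likewise isolate the term where the conjugation pattern forces $(e_1,e_2)=(1,-1)$, obtaining $2\sum_{k\ge1}\widehat{I}^+_K(k)\widehat{I}^-_K(k)k+O(1)=\frac{1}{\pi^2}\log(K|\mathcal I|)+O(1)$, again by Proposition \ref{propFR}. When $h_1\ne h_2$ the main diagonal term is absent and only the error pieces remain, giving $O(1)$.

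It then remains to check that the two off-main contributions are $O(1)$. The $k_1=2k_2$ piece contributes, after summing over the $O(1)$ many sign/character configurations, a quantity bounded by
\[
\sum_{k_2\ge1}\widehat{I}^\pm_K(2k_2)\,\widehat{I}^\pm_K(k_2)\,k_2\,q^{-k_2/2} = O(1)
\]
by the second estimate in Proposition \ref{propmanysums} (with $\alpha_1=\alpha_2=1$, $\beta=1$, $\gamma=1/2$). The uniform error piece contributes at most
\[
\sum_{k_1,k_2\ge1}\widehat{I}^\pm_K(k_1)\widehat{I}^\pm_K(k_2)\Bigl(k_1 q^{-k_2/2-k_1/6}+q^{-(1/2-1/p)(k_1+k_2)}\Bigr) = O(1)
\]
by the first estimate in Proposition \ref{propmanysums} (the first summand with $(\beta_1,\beta_2)=(1,0)$, $(\gamma_1,\gamma_2)=(1/6,1/2)$; the second with $\beta_i=0$, $\gamma_i=1/2-1/p>0$ since $p>2$). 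Assembling the three pieces yields the claimed formula.

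\medskip

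The main obstacle I expect is bookkeeping rather than analysis: one must carefully track, for each of the four products of $S_k$-factors coming from expanding the two sums in \eqref{defSK}, exactly which choice of $(e_1,e_2)\in\{\pm1\}^2$ it corresponds to, confirm that precisely the two ordered configurations with $(e_1,e_2)=(1,-1)$ (equivalently, one factor a character sum and the other its conjugate) produce the diagonal main term, and verify that the requirement ``$0<h_i\le(p-1)/2$'' in Theorem \ref{Mcovariance} is met after possibly replacing $\psi^{h_i}$ by its conjugate $\bar\psi^{h_i}=\psi^{p-h_i}$ — the factor of $2$ in the final covariance, versus the factor $\frac{1}{2\pi^2}$ appearing in Proposition \ref{propFR}, is entirely due to this doubling, and getting the constant right is the one place where care is essential.
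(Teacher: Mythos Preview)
Your proposal is correct and follows essentially the same route as the paper: expand the product via \eqref{defSK} into a double sum over $k_1,k_2$ and the four sign configurations, identify each averaged term as an $M_{2,d}^{(k_1,k_2),(e_1,e_2),(h_1,h_2)}$, invoke Theorem~\ref{Mcovariance} (valid since $K<d/2$), and then use Proposition~\ref{propFR} for the diagonal main term and Proposition~\ref{propmanysums} for the remaining error pieces. Your write-up is in fact more explicit than the paper's own proof about bounding the $\delta_{k_1,2k_2}$ and uniform error contributions, and your identification of the factor $2$ as coming from the two ordered sign patterns $(1,-1)$ and $(-1,1)$ is exactly the point; note that since the hypothesis already has $0<h_1,h_2\le (p-1)/2$, no replacement of $\psi^{h_i}$ by its conjugate is needed to apply Theorem~\ref{Mcovariance}.
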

\begin{proof}
 By definition,
\begin{eqnarray*}
 &&\left< S^\pm(K, f, \psi^{h_1}) S^\pm(K, f, \psi^{h_2}) \right>  \\
&&= \sum_{{k_1},{k_2} =1}^K \widehat{I}_K^\pm({k_1}) \widehat{I}_K^\pm({k_2}) M_{2,d}^{({k_1},{k_2}),(1,1),(h_1,h_2)}  +
\widehat{I}_K^\pm({k_1}) \widehat{I}_K^\pm(-{k_2}) M_{2,d}^{({k_1},{k_2}),(1,-1),(h_1,h_2)}  \\
&& \;\;\;\; + \widehat{I}_K^\pm(-{k_1}) \widehat{I}_K^\pm({k_2}) M_{2,d}^{({k_1},{k_2}),(-1,1),(h_1,h_2)}+\widehat{I}_K^\pm(-{k_1}) \widehat{I}_K^\pm(-{k_2}) M_{2,d}^{({k_1},{k_2}),(-1,-1),(h_1,h_2)}.  
\end{eqnarray*}
Then, by repeated use of Theorem \ref{Mcovariance} 
and
Proposition \ref{propmanysums}, the summation over $k_1, k_2$ is $O(1)$ if $h_1\neq h_2$. If $h_1=h_2$ then
\begin{eqnarray*}
\left< S^\pm(K, f, \psi^{h_1}) ^2 \right> &=& 2 \sum_{{k_1}=1}^K \widehat{I}_K^\pm({k_1})\widehat{I}_K^\pm({-k_1})  {k_1} + O(1)
= 2 \sum_{{k_1}\geq 1} \widehat{I}_K^\pm({k_1})^2 {k_1} 
 + O ( 1 )  \\
&=& \frac{1}{\pi^2} \log(K |\mathcal{I}|) + O(1)
\end{eqnarray*}
by applying Proposition \ref{propFR}.
The proof for  $\left< S^\pm(K, f, \psi^{h_1}) S^\mp(K, f, \psi^{h_2}) \right>$ follows along exactly the same lines.
\end{proof}

\begin{cor}\label{cor:2ndmoment}
For any  $K$ with $\max\{1,1/|\mathcal I|\}<K<d/2$, 
\[
\langle S^\pm(K,C_f)^2 \rangle=\langle S^+(K,C_f)S^-(K,C_f)\rangle=\frac{2(p-1)}{\pi^2}\log(K|\mathcal I|) + O(1).
\]
\end{cor}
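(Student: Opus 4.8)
The plan is to expand $S^\pm(K,C_f)=\sum_{h=1}^{p-1}S^\pm(K,f,\psi^h)$ and reduce everything to the covariances already computed in Theorem \ref{covariance}. Writing
\[
\langle S^\pm(K,C_f)^2\rangle=\sum_{h_1,h_2=1}^{p-1}\langle S^\pm(K,f,\psi^{h_1})S^\pm(K,f,\psi^{h_2})\rangle,
\]
the first obstacle is that Theorem \ref{covariance} is stated only for $0<h_1,h_2\leq(p-1)/2$, whereas here $h_1,h_2$ range over all of $\{1,\dots,p-1\}$. I would handle this by the symmetry $\psi^{p-h}=\overline{\psi^h}$: since complex conjugation of an additive character corresponds to $n\mapsto -n$ in the exponential sums $S_k(f,\psi)$, one has $S^\pm(K,f,\psi^{p-h})=\overline{S^\pm(K,f,\psi^h)}$ after interchanging the roles of $k$ and $-k$ in the definition \eqref{defSK}. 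Thus every term $\langle S^\pm(K,f,\psi^{h_1})S^\pm(K,f,\psi^{h_2})\rangle$ with $h_1$ or $h_2$ exceeding $(p-1)/2$ equals a term of the form $\langle S^\pm(K,f,\psi^{h_1'})S^\mp(K,f,\psi^{h_2'})\rangle$ or $\langle S^\mp S^\mp\rangle$ with $h_1',h_2'\in\{1,\dots,(p-1)/2\}$, and Theorem \ref{covariance} covers all four sign combinations with the same answer.

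Next I would observe that in the double sum over $h_1,h_2\in\{1,\dots,p-1\}$, the diagonal-type contributions — those reducing (after the conjugation identity) to $h_1'=h_2'$ — each contribute $\tfrac{1}{\pi^2}\log(K|\mathcal I|)+O(1)$, while all genuinely off-diagonal terms contribute $O(1)$. The set of pairs $(h_1,h_2)$ giving a main term is $\{(h,h):1\le h\le p-1\}\cup\{(h,p-h):1\le h\le p-1\}$; these two sets are disjoint because $p$ is odd, so there are exactly $2(p-1)$ such pairs. Hence
\[
\langle S^\pm(K,C_f)^2\rangle=2(p-1)\cdot\frac{1}{\pi^2}\log(K|\mathcal I|)+O(1)=\frac{2(p-1)}{\pi^2}\log(K|\mathcal I|)+O(1),
\]
where the $O(1)$ absorbs both the error terms from the main contributions and the $O(p^2)=O(1)$ off-diagonal terms (recall $p$ is fixed). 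The computation for $\langle S^+(K,C_f)S^-(K,C_f)\rangle$ is identical: $S^+(K,C_f)S^-(K,C_f)=\sum_{h_1,h_2}S^+(K,f,\psi^{h_1})S^-(K,f,\psi^{h_2})$, and Theorem \ref{covariance} gives the same value $\tfrac{1}{\pi^2}\log(K|\mathcal I|)+O(1)$ for the mixed-sign covariance on the diagonal and $O(1)$ off-diagonal, so the same count of $2(p-1)$ main terms applies.

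The main obstacle I anticipate is purely bookkeeping: making the conjugation reduction $\psi^{p-h}\leftrightarrow\overline{\psi^h}$ precise at the level of \eqref{defSK}, and then correctly enumerating which pairs $(h_1,h_2)\in\{1,\dots,p-1\}^2$ map under this reduction to a diagonal pair in $\{1,\dots,(p-1)/2\}^2$ — being careful that the two families $(h,h)$ and $(h,p-h)$ do not overlap (which uses $p$ odd) and that no pair is double-counted. Once the count of $2(p-1)$ main terms is established, the rest is a direct application of Theorem \ref{covariance} and the boundedness of the number of summands (since $p$ is fixed), and the corollary follows.
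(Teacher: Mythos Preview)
Your approach is essentially the paper's: expand the square as a double sum over $h_1,h_2\in\{1,\dots,p-1\}$, invoke Theorem~\ref{covariance}, and count that exactly the $2(p-1)$ pairs with $h_1=h_2$ or $h_1=p-h_2$ give the main term. The count and the conclusion are correct.

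One point of bookkeeping is slightly off. You write that conjugation sends $S^\pm$ to $S^\mp$; it does not --- the superscript $\pm$ refers to the choice of Beurling--Selberg polynomial $I_K^\pm$, not to a sign that conjugation can flip. The cleaner identity (which the paper uses, just before Theorem~\ref{thm:genmoments}) is that $S^\pm(K,f,\psi^{p-h})=S^\pm(K,f,\psi^h)$ \emph{exactly}: in the definition~\eqref{defSK} replacing $\psi^h$ by $\overline{\psi^h}$ swaps the two summands, and by property~\textbf{(d)} the coefficients satisfy $\widehat{I}^\pm_K(-k)=\widehat{I}^\pm_K(k)$, so nothing changes. (In particular $S^\pm(K,f,\psi^h)$ is real.) With this identity the reduction to $1\le h_1',h_2'\le(p-1)/2$ is immediate and no appeal to the mixed-sign case of Theorem~\ref{covariance} is needed for $\langle S^\pm(K,C_f)^2\rangle$. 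Your final count of $2(p-1)$ main terms is unaffected, since Theorem~\ref{covariance} gives the same value regardless of the sign pattern, so the argument still goes through.
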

\begin{proof}
First we note that
\begin{eqnarray*}
\langle S^\pm(K,C_f)^2\rangle=  \sum_{h_1,h_2=1}^{p-1} \left\langle S^{\pm}(K, f, \psi^{h_1}) S^{\pm}(K, f, \psi^{h_2})\right\rangle.
\end{eqnarray*}
Notice that by Theorem \ref{covariance}, the mixed average contributes $\frac{1}{\pi^2}\log(K|\mathcal I|)+O(1)$ for each term where $h_1=h_2$ or $h_1=p-h_2$. The proof for $\langle S^+(K,C_f)S^-(K,C_f)\rangle$ is identical. 
\end{proof}

\section{Third moment} \label{3mom}

Let $k_1, k_2, k_3$ be positive integers, $e_1, e_2, e_3$ take values $\pm 1$, and $h_1,h_2,h_3$ be integers such that $p\nmid h_i$. Denote ${\bf k}=(k_1,k_2,k_3)$, ${\bf e}=(e_1,e_2,e_3)$, and ${\bf h}=(h_1,h_2,h_3)$.
For every ${\bm \alpha}=(\alpha_1,\alpha_2,\alpha_3) \in \F_{q^{k_1}} \times \F_{q^{k_2}} \times \F_{q^{k_3}} $, set
$$
m_{3,d}^{{\bf k}, {\bf e}, {\bf h}}({\bm \alpha}) =
\left< \psi(e_1 h_1 \tr_{k_1}f(\alpha_1) + e_2 h_2 \tr_{k_2}f(\alpha_2)  +
e_3 h_3 \tr_{k_{3}} f(\alpha_3)) \right>,
$$
and
$$M_{3, d}^{{\bf k}, {\bf e}, {\bf h}}=
 \sum_{{\alpha_i \in \F_{q^{k_i}}} \atop {i=1,2,3}}
q^{-(k_1 + k_2 + k_3)/2} m_{3,d}^{{\bf k}, {\bf e}, {\bf h}}({\bm \alpha}).$$

In an analogous manner to Section \ref{2mom}, one can prove the following.
\begin{lem}\label{lemm3}
Let $p\nmid h_1h_2h_3$ and let $e_1, e_2, e_3 \in \{-1, 1\}$. Assume $k_1, k_2, k_3 > 0$ and $k_1+k_2+k_3 < d$. For $i=1,2,3$  $\alpha_i$ be an element of  $\F_{q^{k_i}}$
with minimal polynomial $g_i$ over $\F_q$  of degree $u_i.$
We have $m_{3,d}^{{\bf k}, {\bf e}, {\bf h}}({\bm \alpha})=1$ in any of the following cases
\begin{itemize}
 \item $g_1=g_2=g_3, p  \mid \frac{(e_1h_1k_1+e_2h_2k_2+e_3h_3k_3)}{u_1}, p\nmid\frac {k_1k_2k_3}{u_1u_2u_3}$.
 \item $g_{j_1}=g_{j_2}, p \mid \frac{(e_{j_1}h_{j_1}k_{j_1}+e_{j_2}h_{j_2}k_{j_2})}{ u_{j_1}}, p \nmid \frac{k_{j_1}k_{j_2}}{u_{j_1}u_{j_2}}, p \mid\frac{k_{j_3}}{u_{j_3}}$ , where $(j_1,j_2,j_3)$ is any permutation of $(1,2,3)$.
\item $p \mid \frac{k_i}{u_i}, i=1,2,3$.
\end{itemize}
Otherwise $m_{3,d}^{{\bf k}, {\bf e}, {\bf h}}({\bm \alpha})=0$.

\end{lem}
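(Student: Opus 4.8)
The plan is to mimic the proof of Lemma \ref{lemmaindependence} closely, working one character at a time and reducing from $\mathcal F_d'$ to $\mathcal F_d$ via the map $\mu$ from \eqref{map-mu}. First I would note that, exactly as in the two-variable case, because $p\nmid h_i$ the condition $p\mid\frac{k_i}{u_i}$ forces $\tr_{k_i}f(\alpha_i)=p\,\tr_{k_i/p}f(\alpha_i)=0$, so any index $i$ with $p\mid\frac{k_i}{u_i}$ may simply be dropped from the exponential. This immediately disposes of the case $p\mid\frac{k_i}{u_i}$ for all $i=1,2,3$ (the average is $\langle\psi(0)\rangle=1$), and in general reduces the problem to averaging an exponential in those variables $\alpha_i$ for which $p\nmid\frac{k_i}{u_i}$, where we may moreover replace $\tr_{k_i}f(\alpha_i)$ by $\frac{k_i}{u_i}\tr_{u_i}f(\alpha_i)$ and absorb $e_ih_i\frac{k_i}{u_i}$ into a single nonzero residue mod $p$.

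Next I would organize the remaining analysis by the partition of the surviving indices induced by equality of the minimal polynomials $g_i$. If two surviving variables $\alpha_{j_1},\alpha_{j_2}$ have the same minimal polynomial, they are $\F_q$-conjugate, so $\tr_{u_{j_1}}f(\alpha_{j_1})=\tr_{u_{j_2}}f(\alpha_{j_2})$, and the two terms combine into $\frac{e_{j_1}h_{j_1}k_{j_1}+e_{j_2}h_{j_2}k_{j_2}}{u_{j_1}}\tr_{u_{j_1}}f(\alpha_{j_1})$; this is identically $0$ exactly when $p\mid\frac{e_{j_1}h_{j_1}k_{j_1}+e_{j_2}h_{j_2}k_{j_2}}{u_{j_1}}$, and otherwise it is a single nonzero multiple of a trace, which by Lemma \ref{lem:avg} averages to $0$. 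The same bookkeeping applies to a triple $g_1=g_2=g_3$ of equal minimal polynomials. For any group of variables with pairwise distinct minimal polynomials, I would use the key counting input: since $\sum_i u_i\le k_1+k_2+k_3<d$, the evaluation map $\tau:\mathcal F_d\to\F_q[X]/(g_1g_2g_3)\simeq\F_{q^{u_1}}\times\F_{q^{u_2}}\times\F_{q^{u_3}}$ (restricted to the distinct-$g_i$ factors) is exactly $(q-1)q^{d-\sum u_i}$-to-one, so $(f(\alpha_1),f(\alpha_2),f(\alpha_3))$ is equidistributed, hence so is $(\tr_{u_1}f(\alpha_1),\tr_{u_2}f(\alpha_2),\tr_{u_3}f(\alpha_3))$ in $\F_p^3$ after multiplying each coordinate by its nonzero constant. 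Therefore the exponential assumes every $p$th root of unity equally often and the average is $0$ unless all the genuinely-present linear combinations vanish identically.

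Assembling these cases, the average is $1$ precisely in the three enumerated situations and $0$ otherwise, which is the statement. Concretely: if all three $g_i$ coincide and $p\nmid\frac{k_1k_2k_3}{u_1u_2u_3}$, the only surviving term is $\frac{e_1h_1k_1+e_2h_2k_2+e_3h_3k_3}{u_1}\tr_{u_1}f(\alpha_1)$, which is $\equiv 0$ iff $p\mid\frac{e_1h_1k_1+e_2h_2k_2+e_3h_3k_3}{u_1}$; if exactly $g_{j_1}=g_{j_2}$ while $p\mid\frac{k_{j_3}}{u_{j_3}}$ (so $\alpha_{j_3}$ drops out) and $p\nmid\frac{k_{j_1}k_{j_2}}{u_{j_1}u_{j_2}}$, the surviving combined term vanishes iff $p\mid\frac{e_{j_1}h_{j_1}k_{j_1}+e_{j_2}h_{j_2}k_{j_2}}{u_{j_1}}$; and if $p\mid\frac{k_i}{u_i}$ for all $i$ everything drops and the average is $1$. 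In every other configuration there is at least one nontrivially-present nonzero linear form in the equidistributed traces, forcing average $0$.

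The main obstacle I anticipate is not any single hard estimate but rather the combinatorial case analysis: one must be careful to enumerate all ways the three minimal polynomials can coincide or differ, track which variables genuinely survive after the $p\mid\frac{k_i}{u_i}$ reductions, and check that in the non-vanishing cases the listed divisibility conditions are exactly equivalent to identical vanishing of the reduced exponent, while in all other cases the equidistribution argument applies to at least one free coordinate. I would also need to state explicitly the three-variable analogue of the "average over $\mathcal F_d$ equals average over $\mathcal F_d'$" reduction, citing the map $\mu$ and the inequality $\sum u_i\le \sum k_i<d$ that guarantees $\tau$ has constant fibers; after that, every step is a direct generalization of the two-variable arguments in Lemmas \ref{lem:avg} and \ref{lemmaindependence}.
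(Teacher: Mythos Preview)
Your proposal is correct and takes essentially the same approach as the paper. The paper itself does not give an explicit proof of this lemma but simply states that it is proved ``in an analogous manner to Section~\ref{2mom}'', and later proves the general $n$-variable version (Lemma~\ref{generalcase}) by exactly the argument you outline: reduce from $\mathcal F_d'$ to $\mathcal F_d$ via $\mu$, drop any index with $p\mid k_i/u_i$, group the surviving indices by common minimal polynomial, combine conjugate traces into a single term, and use the equidistribution of the evaluation map $\tau$ on the distinct-$g_i$ factors to get $0$ whenever any surviving coefficient is nonzero mod $p$.
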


\begin{thm}\label{thm3} Assume $k_1 \geq k_2 \geq k_3 > 0$ and $k_1 + k_2 + k_3 < d$. Then
\begin{eqnarray*}
&&M_{3, d}^{{\bf k}, {\bf e}, {\bf h}}\\ &=&M_{1,d}^{k_1,e_1, h_1}M_{2,d}^{(k_2,k_3),(e_2,e_3), (h_1,h_2)}+M_{1,d}^{k_2,e_2, h_3}M_{2, d}^{(k_1,k_3),(e_1,e_3), (h_1,h_3)}\\ &&+M_{1,d}^{k_3,e_3, h_3}M_{2, d}^{(k_1,k_2),(e_1,e_2),(h_1,h_2)} -2M_{1,d}^{k_1,e_1,h_1}M_{1,d}^{k_2,e_2,h_2}M_{1,d}^{k_3,e_3,h_3}\\
&&+O\left(\delta_{k_1,k_2,k_3} k_1^2q^{-k_1/2}+\delta_{k_1,k_2,2k_3} k_1^2 q^{-3k_1/4} +  \delta_{k_1,2k_2,2k_3} k_1^2 q^{-k_1/2}+k_1^2 q^{-k_1/6-k_2-k_3} \right)\\
&=& e_{p,k_1}q^{-(1/2-1/p)k_1}M_{2,d}^{(k_2,k_3),(e_2,e_3),(h_2,h_3)}+e_{p,k_2}q^{-(1/2-1/p)k_2}M_{2,d}^{(k_1,k_3),(e_1,e_3),(h_1,h_3)}\\ 
&&+e_{p,k_3}q^{-(1/2-1/p)k_3}M_{2, d}^{(k_1,k_2),(e_1,e_2),(h_1,h_2)}+O \left(\delta_{k_1,k_2,k_3} k_1^2q^{-k_1/2}+\delta_{k_1,k_2,2k_3} k_1^2 q^{-3k_1/4}\right) \\ 
&&+ O\left( \delta_{k_1,2k_2,2k_3} k_1^2 q^{-k_1/2}+k_1^2 q^{-k_1/6-k_2-k_3} +q^{-(1/2 - 1/p)(k_1+k_2+k_3)} \right).\\
\end{eqnarray*}
\end{thm}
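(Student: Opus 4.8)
The plan is to expand $M_{3,d}^{\mathbf{k},\mathbf{e},\mathbf{h}}$ by splitting the sum over $(\alpha_1,\alpha_2,\alpha_3) \in \F_{q^{k_1}}\times\F_{q^{k_2}}\times\F_{q^{k_3}}$ according to the coincidence pattern of the minimal polynomials $g_1,g_2,g_3$, exactly as Lemma \ref{lemm3} dictates which configurations contribute $1$. First I would organize the count into four regimes: (i) all three minimal polynomials distinct, (ii) exactly two equal, (iii) all three equal, together with the separate ``$p\mid k_i/u_i$ for all $i$'' contribution which produces the pure $q^{(k_1+k_2+k_3)/p}$ term analogous to the $e_{p,k_1}e_{p,k_2}e_{p,k_3}$ term in the second moment. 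The main point is that the leading behaviour comes from ``factored'' configurations where one variable is essentially independent of the other two (its minimal polynomial distinct from the others), contributing a product of a first-moment quantity $M_{1,d}^{k_i,e_i,h_i} = e_{p,k_i}q^{-(1/2-1/p)k_i}$ (Corollary \ref{cor:moment1}) and a second-moment quantity $M_{2,d}$; the genuinely ``three-way correlated'' configurations ($g_1=g_2=g_3$ with $\alpha_i$ all conjugate) are lower order and get swept into the error term.

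The key computational step is a careful inclusion--exclusion. Writing the trivial decomposition $\{g_i \text{ all distinct}\} = \{\text{all}\} - \{g_1=g_2\}-\{g_1=g_3\}-\{g_2=g_3\}+2\{g_1=g_2=g_3\}$ at the level of the defining sums, and pairing each pair-coincidence sum with the corresponding product $M_{1,d}^{k_i,e_i,h_i}M_{2,d}^{(k_j,k_\ell),(e_j,e_\ell),(h_j,h_\ell)}$, one obtains the three product terms with coefficient $+1$, the over-counted all-equal piece with coefficient $-2$ (matching the $-2M_{1,d}^{k_1,e_1,h_1}M_{1,d}^{k_2,e_2,h_2}M_{1,d}^{k_3,e_3,h_3}$ in the statement), plus genuinely three-way terms. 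To control the latter I would reuse the polynomial-counting estimates from the proof of Theorem \ref{Mcovariance}: when $g_1=g_2=g_3$ the $\alpha_i$ run through conjugates, so the count is $\sum_{m} \pi(m) m^3$ over the relevant divisibility conditions on $m$, and the prime number theorem for function fields $\pi(m)=q^m/m+O(q^{m/2}/m)$ together with the normalization $q^{-(k_1+k_2+k_3)/2}$ produces the stated error terms $\delta_{k_1,k_2,k_3}k_1^2 q^{-k_1/2}$, $\delta_{k_1,k_2,2k_3}k_1^2 q^{-3k_1/4}$, $\delta_{k_1,2k_2,2k_3}k_1^2 q^{-k_1/2}$, and $k_1^2 q^{-k_1/6-k_2-k_3}$ according to how close $k_1,k_2,k_3$ are forced to be (the gcd of $k_1,k_2,k_3$ being at most $k_1/3$ unless we are in one of the flagged near-equal cases, mirroring the $2k_2$ discussion for the covariance). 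The second displayed equality then follows by simply substituting $M_{1,d}^{k_i,e_i,h_i}=e_{p,k_i}q^{-(1/2-1/p)k_i}$, absorbing the triple product $\prod e_{p,k_i}q^{-(1/2-1/p)k_i}$ into the error term $q^{-(1/2-1/p)(k_1+k_2+k_3)}$, and noting that after this substitution the $h$-indices of the paired $M_{2,d}$ factors can be consolidated (since $M_{2,d}$ only depends on whether $h_j=h_\ell$, up to the stated error).

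The main obstacle I expect is bookkeeping the inclusion--exclusion cleanly enough that the ``mixed'' divisibility conditions from Lemma \ref{lemm3} (the conditions $p\mid (e_{j_1}h_{j_1}k_{j_1}+e_{j_2}h_{j_2}k_{j_2})/u_{j_1}$ in the two-equal case, and $p\mid(e_1h_1k_1+e_2h_2k_2+e_3h_3k_3)/u_1$ in the all-equal case) line up exactly with the divisibility conditions already built into the $M_{2,d}$ and $M_{1,d}$ objects, so that the pairing with $M_{1,d}^{k_i,e_i,h_i}M_{2,d}^{(k_j,k_\ell),\dots}$ is an identity and not merely an approximation. One has to check that, for instance, in the configuration $g_1=g_2\neq g_3$, the constraint that forces a nonzero contribution ($p\mid k_3/u_3$ plus the $p\mid(e_1h_1k_1+e_2h_2k_2)/u_1$ condition) is precisely the product of ``$M_{1,d}^{k_3,e_3,h_3}$ is nonzero'' (i.e.\ $e_{p,k_3}=1$) and ``$M_{2,d}^{(k_1,k_2),(e_1,e_2),(h_1,h_2)}$'s diagonal term'', and that the error incurred by replacing the exact count by this product (namely the $O(q^{m/2}/m)$ tails in $\pi(m)$ and the subleading terms in Theorem \ref{Mcovariance}) is dominated by the error terms already present in the statement. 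Everything else is a routine, if lengthy, application of the function-field prime number theorem and Proposition \ref{propmanysums}-style geometric-series bounds, exactly parallel to the second-moment argument in Section \ref{2mom}.
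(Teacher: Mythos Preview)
Your proposal is correct and follows essentially the same route as the paper's proof. The paper is simply terser: it says ``use induction in the same way as in Lemma~\ref{lemm3}'' and then isolates the only genuinely new contribution, namely the all-conjugate case $g_1=g_2=g_3$, bounding the resulting $\sum \pi(m)m^3$ by the same case analysis (equal $k_i$'s, one doubling, otherwise $\gcd\le k_1/3$) that you outline; your explicit inclusion--exclusion is just an unpacking of what the paper means by ``induction''.
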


\begin{proof}
We can use induction in the same way as we used it in the proof of Lemma \ref{lemm3}. The only new term to be considered is given by the case $g_1=g_2=g_3$ and  $p u_1 \mid (e_1h_1k_1+e_2h_2k_2+e_3h_3k_3)$.
This term yields
\[ q^{-(k_1+k_2+k_3)/2}e_{p,e_1h_1k_1+e_2h_2k_2+e_3h_3k_3}\sum_{{{m\mid (k_1,k_2,k_3)}\atop{mp\nmid k_1,k_2,k_3}}\atop{mp \mid (e_1h_1k_1+e_2h_2k_2+e_3h_3k_3)}} \pi(m)m^3.  \\\]
Suppose that $k_1\geq k_2\geq k_3$. If $k_1=k_3$, we have
\[\sum_{{{m\mid k_1}\atop{mp\nmid k_1}}\atop{mp \mid (e_1h_1+e_2h_2+e_3h_3)k_1}}\pi(m)m^3 = O\left(k_1^2q^{k_1}\right).\]
If $k_1=2k_3$, $k_2=k_1$ or $k_2=k_3$, we have
\[\sum_{{{m\mid (k_1,k_2,k_3)}\atop{mp\nmid k_3}}\atop{mp \mid (e_1h_1k_1+e_2h_2k_2+e_3h_3k_3)}} \pi(m)m^3=O\left(k_1^2q^{k_1/2}\right). \]
Finally, for the other cases,
\[\sum_{{{m\mid (k_1,k_2,k_3)}\atop{mp\nmid k_1,k_2,k_3}}\atop{mp \mid (e_1h_1k_1+e_2h_2k_2+e_3h_3k_3)}} \pi(m)m^3=O\left(k_1^2q^{k_1/3}\right). \]

\end{proof}

\begin{thm} Let  $0< h_1, h_2, h_3 \leq (p-1)/2$. For any $K$ with $\max\{1, 1/|\mathcal I|\} <K< d/3$,
 \begin{eqnarray*}
 &&\left< S^\pm(K, f, \psi^{h_1}) S^\pm(K, f, \psi^{h_2})S^\pm(K, f, \psi^{h_3})\right>   
 \\&=& \begin{cases}\frac{3C}{\pi^2}\log(K|\mathcal{I}|)+O\left(1\right) & h_1=h_2= h_3,\\
                                                                                             \frac{C}{\pi^2}\log(K|\mathcal{I}|)+O\left(1\right) & h_{j_1}=h_{j_2}\not = h_{j_3}, \, (j_1,j_2,j_3)\, \text{a permutation of}\, (1,2,3),\\
O(1) & h_i \,\text{distinct}.
                                                                                            \end{cases}
                                                                                            \end{eqnarray*}
where $C$ is the constant defined in Remark $\ref{Chantal's favorite nonconstant}$.
\end{thm}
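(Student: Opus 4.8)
The plan is to expand the triple product of the character sums $S^\pm$, apply the third-moment estimate of Theorem \ref{thm3} to each resulting term, and then read off the logarithmic contribution from the second-moment estimate of Theorem \ref{covariance}, with everything else absorbed by Proposition \ref{propmanysums}. Using \eqref{defSK} and the evenness of $\widehat{I}_K^\pm$ one writes $S^\pm(K,f,\psi^{h_i}) = \sum_{k_i=1}^K\sum_{e_i=\pm1}\widehat{I}_K^\pm(k_i)q^{-k_i/2}\sum_{\alpha_i\in\F_{q^{k_i}}}\psi(e_ih_i\tr_{k_i}f(\alpha_i))$, so that multiplying the three factors and averaging over $\mathcal{F}_d'$ gives
\[
\left< \prod_{i=1}^3 S^\pm(K,f,\psi^{h_i})\right> = \sum_{1\le k_1,k_2,k_3\le K}\ \sum_{e_1,e_2,e_3=\pm1}\widehat{I}_K^\pm(k_1)\widehat{I}_K^\pm(k_2)\widehat{I}_K^\pm(k_3)\,M_{3,d}^{{\bf k},{\bf e},{\bf h}}.
\]
Since $K<d/3$ forces $k_1+k_2+k_3<d$, Theorem \ref{thm3} applies to every summand; as $M_{3,d}^{{\bf k},{\bf e},{\bf h}}$ is invariant under simultaneously permuting the triples $(k_i,e_i,h_i)$ and the weight is symmetric, I would replace $M_{3,d}^{{\bf k},{\bf e},{\bf h}}$ by the permutation-symmetrized right-hand side of Theorem \ref{thm3}, valid for all $\bf k$: a sum $\sum_{a=1}^3 e_{p,k_a}q^{-(1/2-1/p)k_a}M_{2,d}^{(k_b,k_c),(e_b,e_c),(h_b,h_c)}$ over the three choices $\{a,b,c\}=\{1,2,3\}$, plus $-2\prod_iM_{1,d}^{k_i,e_i,h_i}$, plus a symmetric error term.

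The hypothesis $0<h_i\le(p-1)/2$ makes Theorem \ref{covariance} directly applicable to every pair $h_b,h_c$ that occurs, and since $h_b+h_c\le p-1<p$ it rules out $h_b=p-h_c$, so $h_b=h_c$ is the only correlated configuration of a pair. Substituting the decomposition into the triple sum: the term $-2\prod_iM_{1,d}^{k_i,e_i,h_i}$ factors as $-2\prod_{i}\big(\sum_{k_i,e_i}\widehat{I}_K^\pm(k_i)M_{1,d}^{k_i,e_i,h_i}\big)=-2\prod_i\left< S^\pm(K,f,\psi^{h_i})\right>$, which is $O(1)$ since each factor is a fixed multiple of $C(K)=C+O(1/K)$ by Theorem \ref{averageET} and Remark \ref{Chantal's favorite nonconstant}; and each error piece emitted by Theorem \ref{thm3} (the $\delta_{k_1,k_2,k_3}$, $\delta_{k_1,k_2,2k_3}$ and $\delta_{k_1,2k_2,2k_3}$ contributions, and the $q^{-k_1/6-k_2-k_3}$ and $q^{-(1/2-1/p)(k_1+k_2+k_3)}$ contributions), once weighted by $\widehat{I}_K^\pm(k_1)\widehat{I}_K^\pm(k_2)\widehat{I}_K^\pm(k_3)$ and summed, is $O(1)$ by the two estimates of Proposition \ref{propmanysums}, using $|\widehat{I}_K^\pm(k)|\le\frac{1}{K+1}+\min\{|\mathcal I|,\pi/|k|\}$ together with the geometric decay in $q$.

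For each of the three main terms I would separate variables: the sum over $k_a,e_a$ detaches the factor $\sum_{k_a,e_a}\widehat{I}_K^\pm(k_a)e_{p,k_a}q^{-(1/2-1/p)k_a}=\left< S^\pm(K,f,\psi^{h_a})\right>$, again a fixed multiple of $C(K)=C+O(1/K)$, while the remaining sum over $k_b,k_c,e_b,e_c$ is precisely $\left< S^\pm(K,f,\psi^{h_b})S^\pm(K,f,\psi^{h_c})\right>$, equal by Theorem \ref{covariance} to $\frac{1}{\pi^2}\log(K|\mathcal I|)+O(1)$ if $h_b=h_c$ and to $O(1)$ otherwise. Summing over $a=1,2,3$ and counting, a $\log(K|\mathcal I|)$ term arises exactly from those $a$ for which the two indices other than $a$ carry equal $h$'s: all three values of $a$ when $h_1=h_2=h_3$, the single value $a=j_3$ when $h_{j_1}=h_{j_2}\ne h_{j_3}$, and no value of $a$ when the $h_i$ are pairwise distinct; the accompanying constant is the product of the $C$-multiple from $\left< S^\pm(K,f,\psi^{h_a})\right>$ with $\pi^{-2}$, and the discrepancy $C(K)-C=O(1/K)$ is harmless because $K^{-1}\log(K|\mathcal I|)=O(1)$. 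This produces the three cases in the statement.

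I expect the chief obstacle to be the error bookkeeping: one must check, piece by piece, that every $O(\cdot)$ produced by Theorem \ref{thm3} — in particular the several diagonal $\delta$-contributions and the terms with only partial $q$-decay — survives multiplication by the Beurling--Selberg weights and stays $O(1)$. This is precisely what Proposition \ref{propmanysums} is built for, so the difficulty is organizational rather than conceptual. A subsidiary point is carrying out the permutation-symmetry reduction for Theorem \ref{thm3} carefully enough that its ordering hypothesis $k_1\ge k_2\ge k_3$ causes no difficulty; here one uses that both $M_{3,d}^{{\bf k},{\bf e},{\bf h}}$ and the triple Fourier weight are symmetric under permuting the indices, and that the main and error terms on the right of Theorem \ref{thm3}, once symmetrized, keep the same shape.
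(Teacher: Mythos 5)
Your route — expand the triple product, symmetrize Theorem~\ref{thm3} over permutations of $(k_i,e_i,h_i)$, separate each main term as $\langle S^\pm(K,f,\psi^{h_a})\rangle\cdot\langle S^\pm(K,f,\psi^{h_b})S^\pm(K,f,\psi^{h_c})\rangle$, read off the logarithm from Theorem~\ref{covariance}, and kill the rest with Proposition~\ref{propmanysums} — is exactly the derivation the paper intends (it states this theorem without proof, as a consequence of Theorems~\ref{thm3} and~\ref{covariance}). The bookkeeping you describe, including the remark that $h_b+h_c\le p-1$ excludes the $h_b=p-h_c$ resonance, is correct.

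There is, however, one place where the proposal does not actually close: the constant. You write that $\langle S^\pm(K,f,\psi^{h_a})\rangle$ is ``a fixed multiple of $C(K)$'' and then assert that the resulting coefficient matches the theorem, but you never pin down the multiple. The displayed computation in the proof of Theorem~\ref{averageET} gives
\[
\left< S^\pm(K,f,\psi^{h_a})\right> = \sum_{k=1}^{K}\widehat{I}^\pm_K(k)\bigl(M^{k,1,h_a}_{1,d}+M^{k,-1,h_a}_{1,d}\bigr) = 2\sum_{k=1}^{K}\widehat{I}^\pm_K(k)e_{p,k}q^{-(1/2-1/p)k} = 2\,C(K),
\]
so each contributing index $a$ yields $2C(K)\cdot\tfrac{1}{\pi^2}\log(K|\mathcal I|)+O(1)$. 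Summing over the contributing $a$'s therefore produces $\tfrac{6C}{\pi^2}\log(K|\mathcal I|)$ when $h_1=h_2=h_3$ and $\tfrac{2C}{\pi^2}\log(K|\mathcal I|)$ in the two-equal case — twice the constants stated in the theorem. This factor-of-two tension already sits inside the paper: plugging $n=1$ into Theorem~\ref{thm:Smoments} gives $\langle S^\pm(K,f,\psi)\rangle = C\bigl(1+O(\log^{-1})\bigr)$, which contradicts the $2C(K)$ above, and a similar mismatch appears when one compares \eqref{combinatorics} against the definition of $\Delta$. You cannot fix this by ``organizational'' bookkeeping; either the stated constant $C$ in the theorem should be read as incorporating a $2$, or the theorem's constants should be $\tfrac{6C}{\pi^2}$ and $\tfrac{2C}{\pi^2}$. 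Your proof, once you commit to the value $2C(K)$ — the only value supported by the paper's own Theorem~\ref{averageET} — does not reproduce the statement as written, and you should flag and resolve that discrepancy rather than leave the multiple unspecified.

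A smaller technical remark: you fold the $-2\prod_i M_{1,d}^{k_i,e_i,h_i}$ term and the error terms together somewhat casually, but this part is fine — the $-2\prod_i$ term factors into $-2\prod_i\langle S^\pm(K,f,\psi^{h_i})\rangle = O(1)$ exactly as you say, and each $\delta$-diagonal and $q$-decaying piece from Theorem~\ref{thm3} is indeed summable to $O(1)$ by the two estimates of Proposition~\ref{propmanysums}.
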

\begin{cor}
 For any $K$ with $\max\{1, 1/|\mathcal I|\} <K< d/3$,
\[
\langle S^\pm(K,C_f)^3\rangle=\frac{6C(p-1)^2}{\pi^2}\log(K|\mathcal I|)+O(1).
\]
\end{cor}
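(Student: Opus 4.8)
The plan is to deduce this corollary from the theorem immediately above in the same way that Corollary~\ref{cor:2ndmoment} is deduced from Theorem~\ref{covariance}: expand the cube of $S^\pm(K,C_f)=\sum_{h=1}^{p-1}S^\pm(K,f,\psi^h)$ into $(p-1)^3$ three-fold correlations, evaluate each one using the preceding theorem, and add them up. The one point I would make explicit before starting is the symmetry $S^\pm(K,f,\psi^h)=S^\pm(K,f,\psi^{p-h})$ for $1\le h\le p-1$, which is what allows us to bring every index into the range $\{1,\dots,(p-1)/2\}$ in which the theorem is stated. This symmetry is immediate from \eqref{defSK}: the Fourier coefficients satisfy $\widehat I_K^\pm(-k)=\widehat I_K^\pm(k)\in\mathbb R$ by property~{\bf (d)}, and since $\psi^{p-h}=\overline{\psi^h}$ we have $S_k(f,\psi^{p-h})=\overline{S_k(f,\psi^h)}=S_k(f,\overline{\psi^h})$, so replacing $\psi^h$ by $\psi^{p-h}$ in \eqref{defSK} merely swaps the two summands $\widehat I_K^\pm(k)S_k(f,\psi^h)$ and $\widehat I_K^\pm(-k)S_k(f,\overline{\psi^h})$, leaving $S^\pm(K,f,\psi^h)$ unchanged.

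Next I would do the combinatorial bookkeeping. For a triple $(h_1,h_2,h_3)\in\{1,\dots,p-1\}^3$ set $h_i'=\min\{h_i,p-h_i\}\in\{1,\dots,(p-1)/2\}$; by the symmetry just recorded, $\langle S^\pm(K,f,\psi^{h_1})S^\pm(K,f,\psi^{h_2})S^\pm(K,f,\psi^{h_3})\rangle=\langle S^\pm(K,f,\psi^{h_1'})S^\pm(K,f,\psi^{h_2'})S^\pm(K,f,\psi^{h_3'})\rangle$, and the preceding theorem (whose hypothesis $\max\{1,1/|\mathcal I|\}<K<d/3$ is exactly the one in the corollary) evaluates the right-hand side: it is $\frac{3C}{\pi^2}\log(K|\mathcal I|)+O(1)$ when $h_1'=h_2'=h_3'$, it is $\frac{C}{\pi^2}\log(K|\mathcal I|)+O(1)$ when exactly two of the $h_i'$ agree, and it is $O(1)$ when $h_1',h_2',h_3'$ are distinct. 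The count is then routine: for each fixed $v\in\{1,\dots,(p-1)/2\}$ there are $2^3=8$ triples $(h_1,h_2,h_3)$ reducing to $(v,v,v)$ (each coordinate independently lies in $\{v,p-v\}$, and $v\ne p-v$), so the first case occurs for $8\cdot\frac{p-1}{2}$ triples; since there are $3\cdot\frac{p-1}{2}\big(\frac{p-1}{2}-1\big)$ reduced triples with exactly two equal coordinates and each lifts to $8$ triples, the second case occurs for $8\cdot 3\cdot\frac{p-1}{2}\big(\frac{p-1}{2}-1\big)$ triples; all remaining triples contribute $O(1)$ each.

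Finally I would assemble the answer. Because $p$ is fixed, the $O(1)$ contributions from the at most $(p-1)^3$ summands combine into a single $O(1)$, and the main terms sum to
\[
\Big[\,8\cdot\tfrac{p-1}{2}\cdot 3+8\cdot 3\cdot\tfrac{p-1}{2}\big(\tfrac{p-1}{2}-1\big)\Big]\frac{C}{\pi^2}\log(K|\mathcal I|).
\]
Writing $m=(p-1)/2$, the bracket equals $24m+24m(m-1)=24m^2=6(p-1)^2$, which yields $\langle S^\pm(K,C_f)^3\rangle=\frac{6C(p-1)^2}{\pi^2}\log(K|\mathcal I|)+O(1)$, as claimed. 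I do not expect a genuine obstacle here, since all the analytic content sits in the already-proven theorem; the only delicate points are making the symmetry reduction $h\mapsto\min\{h,p-h\}$ explicit — so that ``equal reduced indices'' genuinely captures every coincidence, in particular the $h_i=p-h_j$ ones that already surfaced in the second moment computation of Corollary~\ref{cor:2ndmoment} — and getting the lift multiplicity $2^3$ right in the case count.
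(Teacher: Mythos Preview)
Your proposal is correct and follows exactly the approach implicit in the paper (which does not spell out a proof for this corollary): expand $S^\pm(K,C_f)^3$, use the symmetry $S^\pm(K,f,\psi^h)=S^\pm(K,f,\psi^{p-h})$ to reduce to indices in $\{1,\dots,(p-1)/2\}$, apply the preceding theorem, and count. This is precisely the pattern used for Corollary~\ref{cor:2ndmoment} and later in the proof of Theorem~\ref{thm:sumisgaussian}, where the paper writes $S^\pm(K,C_f)=2\sum_{j=1}^{(p-1)/2}S^\pm(K,f,\psi^j)$ directly; your version with the reduction map $h\mapsto\min\{h,p-h\}$ and the lift multiplicity $2^3$ is an equivalent bookkeeping of the same computation.
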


\section{General Moments}\label{genmom}

Let $n, k_1, \dots, k_n$ be positive integers, let $e_1, \dots, e_n$ take values $\pm 1$ and let $h_1, \dots, h_n$ be integers such that $p\nmid h_i$, $1\leq i\leq n$.
Let ${\bf k} = (k_1, \dots, k_n)$, ${\bf e} = (e_1, \dots, e_n)$ and ${\bf h}=(h_1,\dots, h_n)$. 
Let $\alpha_i \in \F_{q^{k_i}}$, $1 \leq i \leq n$, and let ${\bm \alpha}=(\alpha_1,\dots,\alpha_n)$. We define
$$
m_{n, d}^{{\bf k}, {\bf e}, {\bf h}}({\bm \alpha}) =
\left< \psi(e_1 h_1 \tr_{k_1}f(\alpha_1) + \dots +
e_nh_n \tr_{k_{n}} f(\alpha_n)) \right>
$$
and
$$M_{n, d}^{{\bf k}, {\bf e}, {\bf h}} =
 \sum_{{\alpha_i \in \F_{q^{k_i}}} \atop {i=1, \dots, n}}
q^{-(k_1 + \dots + k_n)/2} m_{n, d}^{{\bf k}, {\bf e}, {\bf h}}({\bm \alpha}).$$

We are computing in this section the general moments
$$\left< S^\pm(K, f, \psi)^n \right>
=  \sum_{k_1, \dots, k_n=1}^K \sum_{e_1, \dots, e_n = \pm 1}
I_K^{\pm}(e_1 k_1) \dots I_K^{\pm}(e_n k_n) M_{n,d}^{{\bf k}, {\bf e}} 
$$
and
\[
\left< S^\pm(K, f, \psi^{h_1})\dots S^\pm(K, f, \psi^{h_n})  \right>=  \sum_{k_1, \dots, k_n=1}^K \sum_{e_j = \pm 1 , \atop {1 \leq j \leq n}}
I_K^{\pm}(e_1 k_1) \dots I_K^{\pm}(e_n k_n) M_{n,d}^{{\bf k}, {\bf e}, {\bf h}}.
\]

\begin{lem} \label{generalcase} Assume $k_1, \dots, k_n > 0$, $k_1 + \dots + k_n < d$.
Let $g_1, \dots, g_s$ of degree $u_1, \dots, u_s$ respectively be all the distinct minimal polynomials over $\F_q$ of
$\alpha_1, \dots, \alpha_n$ (we allow the possibility that some $\alpha_i$'s are conjugate to each other, thus $s\leq n$), and let
$$\epsilon_i = \frac{1}{u_i} \sum_{\alpha_j \in R(g_i)} {k_j} e_j h_j, \;\; 1 \leq i \leq s,$$
where $R(g)$ is the set of roots of $g$.
Then
$$m_{n, d}^{{\bf k}, {\bf e}, {\bf h}}({\bm \alpha}) = \left\{ \begin{array}{ll} 1 & \mbox{if $p \mid \epsilon_i$ for $1 \leq i \leq s$}, \\
0 & \mbox{otherwise}. \end{array} \right.$$
\end{lem}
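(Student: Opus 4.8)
The plan is to prove Lemma \ref{generalcase} by induction on $s$, the number of distinct minimal polynomials among the $\alpha_i$, in essentially the same way as Lemmas \ref{lem:avg}, \ref{lemmaindependence}, and \ref{lemm3}. The base case $s=1$ (all $\alpha_i$ conjugate over $\F_q$) follows the argument of the $g_1 = g_2$ case of Lemma \ref{lemmaindependence}: if $g$ is the common minimal polynomial of degree $u$ with root set $R(g)$, then for each $\alpha_j \in R(g)$ we have $\tr_{k_j} f(\alpha_j) = (k_j/u)\tr_u f(\alpha_j)$, and since all $f(\alpha_j)$ are $\F_q$-conjugate, $\tr_u f(\alpha_j)$ is the same element $\tau \in \F_p$ for every $j$. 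Hence $\sum_j e_j h_j \tr_{k_j} f(\alpha_j) = \bigl(\frac{1}{u}\sum_j e_j h_j k_j\bigr)\tr_u f(\alpha) = \epsilon_1 \tr_u f(\alpha)$. If $p \mid \epsilon_1$ this is $0$, so the average is $1$; if $p \nmid \epsilon_1$, then by Lemma \ref{lem:avg} (with $h = \epsilon_1$ — here I should be a little careful, since $\epsilon_1$ might be divisible by $p$ only in the excluded case, and if $p\nmid \epsilon_1$ we may apply the lemma, noting $u < d$), the average is $0$.

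For the inductive step, suppose the statement holds whenever the number of distinct minimal polynomials is less than $s$, and let $g_1, \dots, g_s$ be distinct of degrees $u_1, \dots, u_s$ with $u_1 + \dots + u_s \le k_1 + \dots + k_n < d$. As in Lemma \ref{lemm3}, I would replace the average over $\mathcal F_d'$ by the average over $\mathcal F_d$ using the trace-preserving $q^{\lfloor d/p\rfloor}$-to-one map $\mu$ of \eqref{map-mu}. Then the evaluation map $\tau: \mathcal F_d \to \F_q[X]/(g_1 \cdots g_s) \simeq \F_{q^{u_1}} \times \cdots \times \F_{q^{u_s}}$ is $(q-1)q^{d - (u_1 + \dots + u_s)}$-to-one, so as $f$ ranges over $\mathcal F_d$, the tuple $(f(\beta_1), \dots, f(\beta_s))$ (one root $\beta_i$ of each $g_i$) is equidistributed over $\F_{q^{u_1}} \times \cdots \times \F_{q^{u_s}}$, hence $(\tr_{u_1} f(\beta_1), \dots, \tr_{u_s} f(\beta_s))$ is equidistributed over $\F_p^s$. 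Grouping the sum $\sum_i e_i h_i \tr_{k_i} f(\alpha_i)$ by common minimal polynomial and using $\tr_{k_j} f(\alpha_j) = (k_j/u_i)\tr_{u_i} f(\alpha_j)$ together with the conjugacy fact that $\tr_{u_i} f(\alpha_j)$ is independent of which root $\alpha_j \in R(g_i)$ is chosen, the total exponent becomes $\sum_{i=1}^s \epsilon_i \tr_{u_i} f(\beta_i)$ where $\epsilon_i$ is as defined. If $p \mid \epsilon_i$ for all $i$ this is identically $0$ and the average is $1$. Otherwise there is some $i_0$ with $p \nmid \epsilon_{i_0}$; since $(\tr_{u_i} f(\beta_i))_i$ is equidistributed over $\F_p^s$, the linear form $\sum_i \epsilon_i \tr_{u_i} f(\beta_i)$ takes every value in $\F_p$ equally often (the coordinate $\tr_{u_{i_0}} f(\beta_{i_0})$ alone already equidistributes it), so $\psi$ of it averages to $0$.

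I expect the main obstacle to be purely bookkeeping rather than conceptual: carefully indexing the partition of $\{1,\dots,n\}$ according to which $\alpha_i$ share a minimal polynomial, verifying that $\tr_{u_i} f(\alpha_j)$ really is the same for all roots $\alpha_j$ of $g_i$ (this uses that $f$ has $\F_q$-coefficients, so $f$ commutes with the Frobenius that permutes the conjugates, and that the absolute trace $\tr_{u_i}$ is Frobenius-invariant), and handling the reduction to $\mathcal F_d$ and the counting of the fiber of $\tau$ — all of which are exact repetitions of the bounded-case arguments already carried out for $n = 1, 2, 3$. One technical point worth stating explicitly: the hypothesis $k_1 + \dots + k_n < d$ guarantees $u_1 + \dots + u_s \le n \le k_1 + \dots + k_n < d$ (since each $u_i \le k_i$ after reindexing, or more crudely $\sum u_i \le \sum_{\alpha_i} \deg(\alpha_i) \le \sum k_i$), which is what makes $\tau$ surjective and evenly covering; I would make sure this inequality is invoked cleanly.
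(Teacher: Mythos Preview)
Your argument is correct and matches the paper's proof almost verbatim: reduce to $\mathcal F_d$ via $\mu$, use that the evaluation map $\tau:\mathcal F_d\to \F_{q^{u_1}}\times\cdots\times\F_{q^{u_s}}$ is evenly covering so that $(\tr_{u_1}f(\beta_1),\dots,\tr_{u_s}f(\beta_s))$ is equidistributed on $\F_p^s$, and conclude that the linear form $\sum_i\epsilon_i\tr_{u_i}f(\beta_i)$ averages $\psi$ to zero unless every $\epsilon_i$ vanishes mod $p$. Two cosmetic points: your ``inductive step'' never invokes the inductive hypothesis, so the proof is really direct (as in the paper) and the induction scaffolding can be dropped; and the inequality $\sum u_i\le n$ is false in general---the correct bound, which you also state, is $\sum_{i=1}^s u_i\le\sum_{j=1}^n k_j<d$ after picking one representative $\alpha_{j_i}$ per $g_i$.
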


\begin{proof} As before, we can take the average over the family $\mathcal{F}_d$ of polynomials of degree $d$ without the
condition that $a_{kp} = 0$ for $1 \leq k \leq d/p$.
Renumbering, suppose that $\alpha_i$ has minimal polynomial $g_i$ for $1 \leq i \leq s$.

Since $\sum_{i=1}^s u_i \leq \sum_{i=1}^s k_i < d$, the map
\begin{eqnarray*}
\tau: \mathcal{F}_d \rightarrow \F_q[X]/(g_1 \dots g_s) \simeq \F_{q^{u_1}}\times \dots
\times \F_{q^{u_s}}
\end{eqnarray*}
is exactly $(q-1) q^{d - (u_1+ \dots +u_s)}$-to-one, and as $f$ ranges over $\mathcal{F}_d$,
$\left( f(\alpha_1), \dots , f(\alpha_s) \right)$ takes every value in $\F_{q^{u_1}} \times \dots
\times \F_{q^{u_s}}$ the same number of times. Now, the product 
$\left( \tr_{u_1} f(\alpha_1), \dots, \tr_{u_s}f(\alpha_s) \right)$ also takes every value in $(\F_p)^s$ the same number of times as $f$ ranges over $\mathcal{F}_d$, and the same holds for any linear combination
$$\gamma_1 \tr_{u_1} f(\alpha_1) + \dots + \gamma_s \tr_{u_s}f(\alpha_s),
$$
unless $p$ divides every $\gamma_i$. This shows that each $p$th root of unity occurs
as many times as
$$\psi \left( \gamma_1 \tr_{u_1} f(\alpha_1) + \dots + \gamma_s \tr_{u_s}f(\alpha_s) \right)$$
when $p$ does not divide all the $\gamma_i$.
We now determine the coefficients $\gamma_i$ for
$$m_{n, d}^{{\bf k}, {\bf e}, {\bf h}}({\bm \alpha}) =
\sum_{f \in \mathcal{F}_d} \psi \left( e_1 h_1 \tr_{k_1} f(\alpha_1) + \dots + e_nh_n \tr_{k_n}f(\alpha_n) \right).$$
Recall that $\tr_{k_i}f(\alpha_i)=\frac{k_i}{u_i} \tr_{u_i} f(\alpha_i)$ for $i=1, \dots, s$.
Let
$$\epsilon_i = \frac{1}{u_i} \sum_{\alpha_j \in R(g_i)} e_j h_j k_j, \;\; 1 \leq i \leq s.$$
Then $\gamma_i=\epsilon_i$, i.e.,
$$m_{n, d}^{{\bf k}, {\bf e}, {\bf h}}({\bm \alpha}) =
\sum_{f \in \mathcal{F}_d} \psi \left( \epsilon_1 \tr_{u_1} f(\alpha_1) + \dots + \epsilon_s \tr_{u_s}f(\alpha_s) \right),$$
which implies that $m_{n, d}^{{\bf k}, {\bf e}, {\bf h}}({\bm \alpha})$ takes the value 1 if  $p \mid \epsilon_i$ for $1 \leq i \leq s$, and 0 otherwise.
\end{proof}

Recall that $\pi(m)$ denotes the number of monic irreducible polynomials in $\F_q[X]$.

\begin{lem} \label{generalcase2}
Assume $k_1, \dots, k_n > 0$, $k_1 + \dots + k_n < d$. Then
$M_{n, d}^{{\bf k}, {\bf e}, {\bf h}}$ is bounded  by a sum  of terms made of products of elementary terms of the
type
\[q^{-(j_1+\dots+j_r)/2} \sum_{{{m\mid (j_1,\dots,j_r)}\atop{mp\mid \sum_{i=1}^r e_i h_i j_i}}} \pi(m) m^r \]
where the indices $j_1, \dots, j_r$ of the elementary terms appearing in each product are in bijection with $k_1, \dots, k_n$.

Let $N_{n, d}^{{\bf k}, {\bf e}, {\bf h}}$ be the sum of the terms made exclusively of products of elementary terms
$$q^{-(j_1+j_2)/2} \sum_{{{m\mid (j_1,j_2)}\atop{mp\mid e_1h_1 j_1+ e_2h_2 j_2}}} \pi(m) m^2.
$$
If $n$ is odd, these terms will also be multiplied by an elementary term $$e_{p,j} q^{-j/2} \sum_{{{m\mid j}\atop{mp\mid e  j}}} \pi(m) m
= e_{p,j} \sum_{m \mid \frac{j}{p}} \pi(m)m = e_{p,j} \# \F_{q^{j/p}} = e_{p,j} q^{j/p}.$$
Let $E_{n, d}^{{\bf k}, {\bf e}, {\bf h}}$ be the sum of all the other terms appearing in $M_{n, d}^{{\bf k}, {\bf e}, {\bf h}}$.
Then, $M_{n, d}^{{\bf k}, {\bf e}, {\bf h}} = N_{n, d}^{{\bf k}, {\bf e}, {\bf h}} + O \left( E_{n, d}^{{\bf k}, {\bf e}, {\bf h}} \right).$
\end{lem}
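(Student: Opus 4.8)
The plan is to deduce everything from Lemma~\ref{generalcase} by expanding the sum defining $M_{n, d}^{{\bf k}, {\bf e}, {\bf h}}$ according to which of the $\alpha_i$ are Galois conjugate over $\F_q$. For $\bm\alpha = (\alpha_1, \dots, \alpha_n)$ let $P(\bm\alpha)$ be the set partition of $\{1, \dots, n\}$ whose blocks are the classes of the relation ``$\alpha_i$ and $\alpha_{i'}$ have the same minimal polynomial over $\F_q$''. Since $q^{-(k_1 + \dots + k_n)/2}$ factors over the blocks of any partition, grouping the sum over $\bm\alpha$ by $P(\bm\alpha)$ gives the identity
\[
M_{n, d}^{{\bf k}, {\bf e}, {\bf h}} = \sum_{P} \ \sum_{\bm\alpha\, :\, P(\bm\alpha) = P} q^{-(k_1 + \dots + k_n)/2}\, m_{n, d}^{{\bf k}, {\bf e}, {\bf h}}(\bm\alpha),
\]
the outer sum running over the finitely many set partitions of $\{1, \dots, n\}$ (their number depending on $n$ alone).

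First I would fix a partition $P$ with blocks $B_1, \dots, B_s$. By Lemma~\ref{generalcase}, for $\bm\alpha$ with $P(\bm\alpha) = P$ one has $m_{n, d}^{{\bf k}, {\bf e}, {\bf h}}(\bm\alpha) = 1$ exactly when $p \mid \epsilon_\ell$ for every $\ell$, where $\epsilon_\ell = u_\ell^{-1} \sum_{j \in B_\ell} e_j h_j k_j$ and $u_\ell$ is the degree of the minimal polynomial shared by the $\alpha_j$ with $j \in B_\ell$; in particular $u_\ell \mid \gcd\{k_j : j \in B_\ell\}$. Parametrising such $\bm\alpha$ by the choice of a monic irreducible $g_\ell$ of degree $u_\ell$ for each block together with a root of $g_\ell$ for each index in that block, and dropping the requirement that distinct blocks use distinct irreducibles (which only increases the non-negative count), one obtains
\[
\sum_{\bm\alpha\, :\, P(\bm\alpha) = P} q^{-(k_1 + \dots + k_n)/2}\, m_{n, d}^{{\bf k}, {\bf e}, {\bf h}}(\bm\alpha) \ \leq \ \prod_{\ell = 1}^{s} \Bigg( q^{-(\sum_{j \in B_\ell} k_j)/2} \sum_{\substack{m \,\mid\, \gcd\{k_j\, :\, j \in B_\ell\} \\ mp \,\mid\, \sum_{j \in B_\ell} e_j h_j k_j}} \pi(m)\, m^{|B_\ell|} \Bigg),
\]
a product of elementary terms of the asserted shape, one per block, whose index sets partition $k_1, \dots, k_n$. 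Summing over the finitely many partitions $P$ proves the first assertion; as all terms are non-negative, this is a genuine upper bound, which is the sense of ``bounded by''.

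For the refinement I would sort the partitions by their multiset of block sizes. For $P$ all of whose blocks have size $2$ (possible only when $n$ is even), or all of size $2$ save one of size $1$ (when $n$ is odd), the upper bounding product $\prod_{B \in P}(\text{elementary term})$ consists of two-variable elementary terms (together with the one-variable term $e_{p, j} q^{-j/2} \sum_{m \mid j,\ mp \mid ej} \pi(m) m$ of the statement in the odd case), and summed over such $P$ these products are by definition $N_{n, d}^{{\bf k}, {\bf e}, {\bf h}}$. For such a $P$, moreover, the gap between $\prod_{B \in P}(\text{elementary term})$ and the true contribution of $P$ counts only tuples in which two or more of these size-$\leq 2$ blocks share an irreducible, hence tuples whose \emph{actual} conjugacy pattern merges those blocks; since merging both enlarges a block and replaces the separate congruences $mp \mid \sum_{j \in B} e_j h_j k_j$ by the stronger single one for their union, this gap is dominated by products of elementary terms having a block of size $\geq 3$, i.e.\ by terms of $E_{n, d}^{{\bf k}, {\bf e}, {\bf h}}$. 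Any partition not of (near-)matching shape already has a block of size $\geq 3$ or, for $n$ even, a block of size $1$, so its contribution, being at most $\prod_{B \in P}(\text{elementary term})$, is likewise $O(E_{n, d}^{{\bf k}, {\bf e}, {\bf h}})$. Combining the two displays above yields $N_{n, d}^{{\bf k}, {\bf e}, {\bf h}} - O(E_{n, d}^{{\bf k}, {\bf e}, {\bf h}}) \leq M_{n, d}^{{\bf k}, {\bf e}, {\bf h}} \leq N_{n, d}^{{\bf k}, {\bf e}, {\bf h}} + E_{n, d}^{{\bf k}, {\bf e}, {\bf h}}$, hence $M_{n, d}^{{\bf k}, {\bf e}, {\bf h}} = N_{n, d}^{{\bf k}, {\bf e}, {\bf h}} + O(E_{n, d}^{{\bf k}, {\bf e}, {\bf h}})$, the implied constant depending on $n$ only through the number of partitions.

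The main obstacle is the combinatorial bookkeeping rather than any single estimate: one must organise the grouping by conjugacy pattern carefully enough to be certain that (a) dropping the ``distinct irreducibles'' condition costs only products attached to strictly coarser partitions, hence products with a strictly larger block, and (b) neither these corrections nor any partition outside the (near-)matching shape can contribute at the order of the main term $N_{n, d}^{{\bf k}, {\bf e}, {\bf h}}$, which is what licenses absorbing them all into $O(E_{n, d}^{{\bf k}, {\bf e}, {\bf h}})$. It is also worth recording the identity $\sum_{m \mid j,\ mp \mid ej} \pi(m) m = q^{j/p}$ when $p \mid j$ and $=0$ otherwise (since $mp \mid ej \iff p \mid j$ and $m \mid j/p$, while $\sum_{m \mid j/p} \pi(m) m = \#\F_{q^{j/p}}$), which is used to simplify the one-variable elementary term appearing in the odd case.
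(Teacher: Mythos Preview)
Your proposal is correct and follows essentially the same route as the paper: both group the sum defining $M_{n,d}^{{\bf k},{\bf e},{\bf h}}$ according to the conjugacy partition $P(\bm\alpha)$, invoke Lemma~\ref{generalcase} to identify the nonvanishing tuples, bound each block's contribution by the elementary term (dropping the distinct-irreducibles constraint across blocks), and then argue via inclusion--exclusion over coarsenings that the discrepancy between the pair-partition upper bound and its true contribution is controlled by products with a strictly larger block, hence by $E$-terms. Your write-up is in fact more explicit about the partition-lattice bookkeeping than the paper, which works out the case $n=4$ and then says ``the general case is similar.''

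One small wording slip: when you write that merging ``replaces the separate congruences \dots\ by the \emph{stronger} single one for their union,'' the union congruence $mp\mid \sum_{j\in B_1\cup B_2} e_jh_jk_j$ is \emph{weaker}, being implied by the two separate congruences. This is precisely why your bound goes the right way (the over-count with the separate congruences is at most the elementary term for the merged block), so the argument is unaffected; only the adjective should be flipped.
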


\begin{proof} We first remark that the number of $(\alpha_1, \dots, \alpha_t) \in
\F_{q^{k_1}} \times \dots \times \F_{q^{k_t}}$ which are conjugate over $\F_q$
is
$$\sum_{m \mid (k_1, \dots, k_t)} \pi(m) m^t.$$
Using Lemma \ref{generalcase},
we then have to count the contribution
coming from the ${\bm \alpha} = (\alpha_1, \dots, \alpha_n)$ such that $p \mid \epsilon_i$ for $1 \leq i \leq s$.
Let $\mathcal{P}$ be the set of partitions of  $n$ in $s$ subsets $T_1, \dots, T_s$. Let $k(T_j)$ be the gcd of the $k_i$ such that $i \in T_j$ and let $s(T_j)=\sum_{i\in T_j} e_i h_i k_i$.
Then, for any such partition, the number of ${\bf \alpha} = (\alpha_1, \dots, \alpha_n) \in \F_{q^{k_1}}
\times \dots \times \F_{q^{k_n}}$
such that $\alpha_i$ is a root of $g_j$ when $i \in T_j$ is less than or equal to
$$
\sum_{{{m\mid k(T_1)}\atop{mp \mid s(T_1)}}} \pi(m)m^{|T_1|} \dots
\sum_{{{m\mid k(T_s)}\atop{mp \mid s(T_s)}}} \pi(m)m^{|T_s|} .
$$
This proves the first statement of the lemma. We remark that the above count is an over-count, as it may also count polynomials $g_1, \dots, g_s$ which are not distinct. For example, the number of $(\alpha_1, \alpha_2, \alpha_3, \alpha_4) \in \F_{q^{j_1}} \times \dots \times  \F_{q^{j_4}}$ with minimal polynomials $g_1=g_2, g_3=g_4$ and $g_1 \neq g_3$
 is
\begin{eqnarray*} &&q^{-(j_1+\dots+j_4)/2}\sum_{{{m\mid (j_1,j_2)}\atop{mp\mid e_1 h_1 j_1 + e_2 h_2 j_2}}} \pi(m) m^2 \sum_{{{m\mid (j_3,j_4)}\atop{mp\mid e_3 h_3 j_3 + e_4 h_4 j_4}}} \pi(m) m^2  -  q^{-(j_1+\dots+j_4)/2} \sum_{{{m\mid (j_1,\dots,j_4)}\atop{mp\mid e_1 h_1 j_1 + \dots + e_4 h_4 j_4}}} \pi(m) m^4,
\end{eqnarray*}
which can be written as a term in $N_{n, d}^{{\bf k}, {\bf e}, {\bf h}}$ and a term in $E_{n, d}^{{\bf k}, {\bf e}, {\bf h}}$. The general case
is similar. Suppose that $n=2\ell$ is even. Then, using inclusion-exclusion, the number of $(\alpha_1, \dots, \alpha_n)
\in (\F_{q^{k_1}}, \dots, \F_{q^{k_n}})$
such that $\alpha_i$ and $\alpha_{\ell+i}$ have minimal polynomial $g_i$, and all the $g_i$ are distinct can be written as
\begin{eqnarray*}
&&q^{-(k_1+\dots+k_{2\ell})/2} \left( \sum_{{{m\mid (k_1,k_{\ell+1})\atop{mp\mid e_1 h_1 k_1 + e_{\ell+1} h_{\ell+1}k_{\ell+1}}}}} \pi(m) m^2 \dots
\sum_{{{m\mid (k_{\ell},k_{2 \ell})}\atop{mp\mid e_\ell h_\ell k_\ell + e_{2 \ell} h_{2\ell}k_{e \ell}}}} \pi(m) m^2 \right) + S(k_1, \dots, k_n)\\
\end{eqnarray*}
\noindent where $S(k_1, \dots, k_n)$ is a sum of terms in  $E_{n, d}^{{\bf k}, {\bf e}, {\bf h}}$. 

The case of $n=2\ell+1$ follows similarly, taking into account 
that one has to multiply by the factor $e_{p,k_n} q^{-k_n/2} \sum_{{{m\mid k_n}\atop{mp\mid e  k_n}}} \pi(m) m$.
\end{proof}

We now compute
\[
\left< S^\pm(K, f, \psi^{h_1})\dots S^\pm(K, f, \psi^{h_n})  \right>=\sum_{{k_1, \dots, k_n =1}\atop{e_1, \dots, e_n = \pm 1}}^K 
I_K^{\pm}(e_1 k_1) \dots I_K^{\pm}(e_n k_n) M_{n,d}^{{\bf k}, {\bf e}, {\bf h}}.
\]

We will use $K = cd$ where $0 < c < 1/n$. Then, $k_i \leq K$ implies that $k_1 + \dots + k_n <d$, and we
can apply the lemmas above.

Using Lemma \ref{generalcase2}, we have to compute sums of the
type
\begin{eqnarray} \sum_{k=1}^K \widehat{I}_K^{\pm}(k) q^{-(1/2 - 1/p)k} = C(K)= O(1),
\end{eqnarray}
and for $r \geq 2$
\begin{eqnarray*} \sum_{k_1,\dots,k_r=1}^K\widehat{I}_K^\pm(e_1 k_1) \dots \widehat{I}_K^\pm(e_r k_r)q^{-(k_1+\dots+k_r)/2}\sum_{{m\mid (k_1,\dots,k_r)}\atop{mp \mid  \sum_{i=1}^f e_i h_ik_i}} \pi(m)m^r.
\end{eqnarray*}

If $r=2$, we have when $p \mid e_1 h_1 k_1 + e_2 h_2 k_2$
\begin{eqnarray}  \label{bigones}
&&\sum_{k_1, k_2=1}^K\widehat{I}_K^\pm(e_1 k_1) \widehat{I}_K^\pm(e_2 k_2)
q^{-(k_1+k_2)/2}\sum_{{m\mid (k_1,k_2)}\atop{mp \mid (e_1h_1k_1+e_2h_2k_2)}} \pi(m)m^2 \nonumber \\&&=\left \{\begin{array}{ll} \frac{1}{2 \pi^2} \log{\left(K |\mathcal{I}|\right)} +O(1)&e_1h_1+e_2h_2\equiv 0\, \mathrm{mod}\, p,\\\\O(1) & \text{otherwise}\end{array}\right.
\end{eqnarray}
as we computed in the proof of Theorems \ref{Mcovariance} and \ref{covariance}. (In those theorems we
had the extra condition $mp \nmid k_1,k_2$ in the sum, but those additional terms only add an $O(1)$ to the
final sum, and we can ignore them.)

For the other terms, we have
\begin{lem} \label{rbig} Let $r>2$, then
\[S:=\sum_{k_1,\dots,k_r=1}^K\widehat{I}_K^\pm(k_1) \dots \widehat{I}_K^\pm(k_r)q^{-(k_1+\dots+k_r)/2}\sum_{{m\mid (k_1,\dots,k_r)}\atop{mp\nmid (k_1,\dots,k_r)}} \pi(m)m^r=O(1)\]
\end{lem}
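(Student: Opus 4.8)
The plan is to collapse $S$ to a geometric series in a single variable by interchanging the order of summation. First I would bound each Fourier coefficient crudely by $|\widehat{I}_K^\pm(k)| \ll 1$, which follows from property {\bf (e)} (the sharper bound $|\widehat{I}_K^\pm(k)| \ll 1/k$ is available but will not be needed), and drop the restriction $mp\nmid(k_1,\dots,k_r)$, which only enlarges a sum of nonnegative terms. This leaves
\[
|S| \ll \sum_{k_1,\dots,k_r=1}^{K} q^{-(k_1+\dots+k_r)/2} \sum_{m\mid(k_1,\dots,k_r)} \pi(m)\,m^r .
\]

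Next I would swap the two sums, moving the sum over $m$ to the outside, and substitute $k_i=m\ell_i$ with $\ell_i\ge 1$. The inner sum over $(\ell_1,\dots,\ell_r)$ then factors completely, so that
\[
|S| \ll \sum_{m\ge 1}\pi(m)\,m^r \left(\sum_{\ell\ge 1} q^{-m\ell/2}\right)^r = \sum_{m\ge 1}\pi(m)\,m^r\left(\frac{q^{-m/2}}{1-q^{-m/2}}\right)^r .
\]
The key point is that combining the factor $m^r$ from $\pi(m)m^r$ with $(q^{-m/2})^r$ produces $m^r q^{-mr/2}$; since $q\ge 3$ the quantity $(1-q^{-m/2})^{-r}$ is bounded uniformly in $m\ge 1$, and the prime number theorem for function fields in the form $\pi(m)\ll q^m/m$ (as recalled above) reduces the estimate to $\sum_{m\ge 1} m^{r-1}q^{m(1-r/2)}$.

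Finally, since $r>2$ the exponent $1-r/2$ is strictly negative, so the summand decays geometrically and dominates the polynomial factor $m^{r-1}$; the resulting series converges to a constant depending only on $q$ and $r$, giving $S=O(1)$. I do not anticipate a genuine obstacle here: the only delicate point is that the hypothesis $r\ge 3$ is exactly what makes the geometric decay $q^{-(k_1+\dots+k_r)/2}$ outweigh the combinatorial growth $\pi(m)m^r$ after the change of variables. For $r=2$ this borderline estimate fails by a hair, which is precisely why one then genuinely picks up a term of size $\log(K|\mathcal{I}|)$, as recorded in \eqref{bigones}.
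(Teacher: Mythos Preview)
Your argument is correct and in fact cleaner than the paper's. The paper proceeds by fixing an ordering $k_1\ge\cdots\ge k_r$ and splitting into three cases according to how large $(k_1,\dots,k_r)$ can be relative to $k_1$ (namely $k_1=k_r$, $k_1=2k_r$ with all $k_i\in\{k_1,k_r\}$, and everything else, giving $(k_1,\dots,k_r)\le k_1,\ k_1/2,\ k_1/3$ respectively). It then bounds the inner sum by $O(k_1^r q^{k_1})$, $O(k_1^r q^{k_1/2})$, $O(k_1^r q^{k_1/3})$ in these cases and finishes by invoking Proposition~\ref{propmanysums}. Your approach bypasses this case analysis entirely: by swapping the order of summation and writing $k_i=m\ell_i$, the problem collapses to a single convergent series $\sum_{m\ge1}\pi(m)m^r q^{-mr/2}\ll\sum_{m\ge1}m^{r-1}q^{m(1-r/2)}$, where the hypothesis $r>2$ is used exactly once to make the exponent negative. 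The paper's route has the virtue of mirroring the case-by-case structure already used for $r=2,3$ (Theorems~\ref{Mcovariance} and~\ref{thm3}), but for the bare $O(1)$ bound your direct computation is more transparent, and your closing remark that $r=2$ is the borderline case $1-r/2=0$ nicely explains why the logarithm appears in \eqref{bigones}.
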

\begin{proof}
Suppose for the moment that $k_1\geq\dots\geq k_r$. If $k_1=k_r$, we have
\[\sum_{{m\mid (k_1,\dots,k_r)}\atop{mp\nmid (k_1,\dots,k_r)}} \pi(m)m^f=O\left(k_1^r q^{k_1} \right).\]
If $k_1=2k_r$, and all the other $k_i$ are equal to $k_1$ or $k_r$, we have
\[\sum_{{m\mid (k_1,\dots,k_r)}\atop{mp\nmid (k_1,\dots,k_r)}} \pi(m)m^r=O\left(k_1^r q^{k_1/2} \right).\]
In all the other cases,
\[\sum_{{m\mid (k_1,\dots,k_r)}\atop{mp\nmid (k_1,\dots,k_r)}} \pi(m)m^r=O\left(k_1^r q^{k_1/3} \right).\]
Putting things together, we get
\begin{eqnarray*}S&\ll &\sum_{k=1}^K\widehat{I}_K^\pm(k)^r k^rq^{-(r-2)k/2}+\sum_{\ell=1}^{r-1}\sum_{k=1}^K\widehat{I}_K^\pm(2k)^\ell  \widehat{I}_K^\pm(k)^{r-\ell}k^rq^{(1-r/2-\ell/2)k}\\
&& +\sum_{k_1,\dots,k_r=1}^K\widehat{I}_K^\pm(k_1) \dots \widehat{I}_K^\pm(k_r)k_1^rq^{-k_1/6-(k_2+\dots+k_r)/2}\\&\ll& 1\end{eqnarray*}
by Proposition \ref{propmanysums}.
\end{proof}

\begin{thm}\label{thm:Smoments} 
For any $K$ with $\max\{1, 1/|\mathcal I|\} <K< d/n$
\[\left< S^\pm(K, f, \psi)^{n} \right>= \left\{\begin{array}{ll}\frac{(2 \ell)!}{\ell! (2\pi^2)^\ell}  \log^\ell(K|\mathcal{I}|) \left(1+O\left(\log^{-1}(K|\mathcal{I}|)\right)\right) & n=2\ell,\\ \\
C\frac{(2\ell+1)!}{\ell! (2\pi^2)^{\ell}}\log^\ell(K|\mathcal{I}|)\left(1+O\left({\log^{-1}\left(K|\mathcal{I}|\right)}\right)\right) & n=2\ell+1, \end{array}\right.\]
where $C$ is defined in Remark $\ref{Chantal's favorite nonconstant}$.
\end{thm}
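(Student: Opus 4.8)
The plan is to expand the $n$-th moment $\left< S^\pm(K,f,\psi)^n\right>$, substitute the evaluation of $M_{n,d}^{{\bf k},{\bf e}}$ from Lemmas~\ref{generalcase} and \ref{generalcase2} (here ${\bf h}=(1,\dots,1)$, so only that case of the lemmas is needed), and then run a Wick-type count. Fix $K$ in the stated range $\max\{1,1/|\mathcal I|\}<K<d/n$, so that $k_1+\dots+k_n<d$ whenever each $k_i\le K$ and every earlier lemma applies; writing
\[
\left< S^\pm(K,f,\psi)^n\right>=\sum_{k_1,\dots,k_n=1}^K\ \sum_{e_1,\dots,e_n=\pm 1}\widehat{I}_K^\pm(k_1)\cdots\widehat{I}_K^\pm(k_n)\,M_{n,d}^{{\bf k},{\bf e}},
\]
I would use Lemma~\ref{generalcase2} to split $M_{n,d}^{{\bf k},{\bf e}}=N_{n,d}^{{\bf k},{\bf e}}+O\!\left(E_{n,d}^{{\bf k},{\bf e}}\right)$, so that the moment breaks into a ``main'' contribution coming from the pairing terms $N_{n,d}$ and an ``error'' contribution coming from $E_{n,d}$.

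I would handle the error first. By construction every term of $E_{n,d}^{{\bf k},{\bf e}}$ contains at least one elementary factor involving $r\ge 3$ of the variables; after summing those variables against the corresponding $\widehat{I}_K^\pm$ (using that $\widehat{I}_K^\pm$ is even, so $\widehat{I}_K^\pm(e_ik_i)=\widehat{I}_K^\pm(k_i)$), Lemma~\ref{rbig} shows this factor contributes $O(1)$. The remaining variables are grouped into at most $\lfloor (n-3)/2\rfloor$ two-variable blocks, each contributing $O(\log(K|\mathcal I|))$ by \eqref{bigones}, together with at most one one-variable block contributing $O(1)$ (by the identity $\sum_{m\mid j/p}\pi(m)m=q^{j/p}$ of Lemma~\ref{generalcase2} and Remark~\ref{Chantal's favorite nonconstant}). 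Hence the whole error contribution is $O\!\left(\log^{\lfloor (n-3)/2\rfloor}(K|\mathcal I|)\right)$, which is smaller than the main term by at least one power of $\log(K|\mathcal I|)$ and is therefore absorbed into the factor $1+O(\log^{-1}(K|\mathcal I|))$.

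For the main term I would organize the sum by the partition structure built into $N_{n,d}$: when $n=2\ell$ it is a sum over perfect matchings of $\{1,\dots,n\}$ into $\ell$ pairs, of the product over pairs of the two-variable elementary terms; when $n=2\ell+1$ one also sums over the choice of a single leftover index and multiplies by its one-variable elementary term. For a fixed such (near-)matching the sum over ${\bf k}$ and ${\bf e}$ factors over the blocks, since distinct blocks involve disjoint sets of variables. Each two-variable block $\{a,b\}$ then contributes, after summing $k_a,k_b$ and $e_a,e_b$, the quantity $\frac1{\pi^2}\log(K|\mathcal I|)+O(1)$ by \eqref{bigones} (the logarithmic term arising exactly for the two of the four sign choices with $e_a=-e_b$), which is the leading term of $\langle S^\pm(K,f,\psi)^2\rangle$; the leftover one-variable block contributes, by Remark~\ref{Chantal's favorite nonconstant}, the constant $C$ up to $O(1/K)$ (together with the harmless sum over that block's sign). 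Counting $(2\ell-1)!!=\frac{(2\ell)!}{2^\ell\ell!}$ matchings when $n=2\ell$ and $(2\ell+1)!!=\frac{(2\ell+1)!}{2^\ell\ell!}$ near-matchings when $n=2\ell+1$, and using $\pi^{-2\ell}=2^\ell(2\pi^2)^{-\ell}$, one arrives at the claimed formulas; this last numerical bookkeeping is routine given Propositions~\ref{propFR} and \ref{propmanysums}.

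I expect the main obstacle to be the combinatorial bookkeeping rather than any new estimate. One has to be certain that \emph{every} partition other than the all-pairs (resp.\ all-pairs-plus-one-singleton) one, as well as all the inclusion--exclusion corrections that pass from the naive count of conjugate tuples to $N_{n,d}$, genuinely land in $E_{n,d}$ and hence cost at least one power of $\log(K|\mathcal I|)$, so that no unexpected top-order term survives --- for instance from a two-variable block with equal signs, or from three or more indices colliding. Once the decomposition $M_{n,d}=N_{n,d}+O(E_{n,d})$ of Lemma~\ref{generalcase2} and the vanishing statements of Lemma~\ref{rbig} and Proposition~\ref{propmanysums} are in place, what remains is the standard Isserlis--Wick moment count, which also makes transparent why the odd moments carry a strictly smaller power of $\log(K|\mathcal I|)$ and so will not affect the eventual Gaussian limit.
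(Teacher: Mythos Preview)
Your proposal is correct and follows essentially the same approach as the paper: decompose $M_{n,d}^{{\bf k},{\bf e}}=N_{n,d}^{{\bf k},{\bf e}}+O(E_{n,d}^{{\bf k},{\bf e}})$ via Lemma~\ref{generalcase2}, bound the error with Lemma~\ref{rbig} and \eqref{bigones}, and then perform a Wick-type count over (near-)matchings to obtain the leading coefficient. Your write-up supplies considerably more detail than the paper's own terse argument (which simply cites Lemmas~\ref{generalcase2} and \ref{rbig}, invokes \eqref{bigones}, and refers to the pairing count), but the logical structure is identical.
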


\begin{proof}
 By Lemmas  \ref{generalcase2} and \ref{rbig}, we observe that the leading term in   $S^{\pm}(K, f, \psi)^n$
will come from the contributions $N_{n, d}^{{\bf k}, {\bf e}}$. By equation (\ref{bigones}), if $n=2\ell$, the leading terms are of the form
$$\left( \frac{1}{2\pi^2} \log{\left( K|\mathcal{I}| \right)} \right)^\ell$$
and if $n=2\ell+1$, the leading terms are of the
form
$$
C \left( \frac{1}{2\pi^2} \log{\left(K |\mathcal{I}|\right)} \right)^\ell.
$$

The final coefficient is obtained by counting the numbers of ways to choose the $\ell$ (or $\ell+1$) coefficients $k_i's$ with positive sign ($e_i=1$) and to pair them with those with negative sign ($e_j=-1$). 

\end{proof}
As $S^\pm(K, f, \psi)=S^\pm(K,f,\bar\psi)$, it is sufficient to study the 
 sum of $S^\pm(K, f, \psi^j)$ for $j$ up to $(p-1)/2$ rather than $p-1$.

We let
\[
\delta_n(C)=\begin{cases} 1 & n=2\ell
\\
C & n=2\ell+1.
\end{cases}
\]
\begin{thm}\label{thm:genmoments}
Let $\ell=\lfloor \frac{n}{2}\rfloor$.  Let $0< h_1,\dots, h_n\leq (p-1)/2$. Then for any $K$ with $\max\{1, 1/|\mathcal I|\} <K< d/n$,
\begin{eqnarray*}&&\left< S^\pm(K, f, \psi^{h_1})\dots S^\pm(K, f, \psi^{h_n})  \right>\\ &=& \delta_n(C)\frac{\Delta(h_1,\dots,h_n)}{(2\pi^2)^\ell}  \log^\ell(K|\mathcal{I}|) \left(1+O\left(\log^{-1}(K|\mathcal{I}|)\right)\right) \end{eqnarray*}
where $C$ is defined in Remark $\ref{Chantal's favorite nonconstant}$ and 
\begin{equation*}\Delta(h_1,\dots,h_{n})=\# \{(e_1,\dots,e_{n})\in \{-1,1\}, \sigma \in \mathbb{S}_{n} | e_1h_{\sigma(1)}+e_2h_{\sigma(2)}\equiv \dots \equiv e_{2\ell-1}h_{\sigma(2\ell-1)}+e_{2\ell}h_{\sigma(2\ell)}\equiv 0 \, \mathrm{mod}\, p  \}\end{equation*} 
where $\mathbb{S}_{n}$ denotes the permutations of the set of $n$ elements.
\end{thm}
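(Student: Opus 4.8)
The plan is to expand the mixed moment, insert the structural description of $M_{n,d}^{{\bf k},{\bf e},{\bf h}}$ supplied by Lemma \ref{generalcase2}, and read off the leading term. Throughout write $\ell=\lfloor n/2\rfloor$; the hypothesis $\max\{1,1/|\mathcal I|\}<K<d/n$ guarantees $k_1+\dots+k_n<d$ whenever $1\le k_i\le K$, so every lemma of this section applies. Unwinding the definition \eqref{defSK} of $S^{\pm}(K,f,\psi^{h_i})$ and using that the average commutes with the finite sums over the $k_i$ and the $\alpha_i$, one gets
\[
\left< S^\pm(K, f, \psi^{h_1})\cdots S^\pm(K, f, \psi^{h_n})\right>=\sum_{k_1,\dots,k_n=1}^{K}\ \sum_{e_1,\dots,e_n=\pm1}\widehat{I}_K^{\pm}(e_1k_1)\cdots\widehat{I}_K^{\pm}(e_nk_n)\,M_{n,d}^{{\bf k},{\bf e},{\bf h}}.
\]
By Lemma \ref{generalcase2} we may replace $M_{n,d}^{{\bf k},{\bf e},{\bf h}}$ by $N_{n,d}^{{\bf k},{\bf e},{\bf h}}+O(E_{n,d}^{{\bf k},{\bf e},{\bf h}})$, so the whole moment becomes a bounded linear combination of sums, against products of the coefficients $\widehat{I}_K^{\pm}$, of products of the three kinds of elementary terms appearing in Lemma \ref{generalcase2}: the pair terms $q^{-(j_1+j_2)/2}\sum_{m\mid(j_1,j_2),\,mp\mid e_1h_1j_1+e_2h_2j_2}\pi(m)m^2$, the singleton term $e_{p,j}q^{-(1/2-1/p)j}$, and the higher terms built from $r\ge3$ indices.

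The first step is to kill the contribution of $E_{n,d}^{{\bf k},{\bf e},{\bf h}}$. When summed over the $k_i$ against the Beurling--Selberg coefficients, a singleton elementary term contributes $C(K)=O(1)$ (Remark \ref{Chantal's favorite nonconstant}), a pair term contributes $O(\log(K|\mathcal I|))$ by \eqref{bigones}, and an $r\ge3$ elementary term contributes $O(1)$ by the geometric estimates used in the proof of Lemma \ref{rbig} together with Proposition \ref{propmanysums}. Since a term of $E_{n,d}^{{\bf k},{\bf e},{\bf h}}$ is, by its very definition, a product of elementary terms whose index sizes sum to $n$ and which fails to be a product of exactly $\ell$ pair terms (times a single singleton term when $n$ is odd), counting sizes shows that every such term carries at most $\ell-1$ pair factors. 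Hence the total $E$-contribution is $O(\log^{\ell-1}(K|\mathcal I|))$, which is absorbed by the claimed factor $1+O(\log^{-1}(K|\mathcal I|))$.

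The second step is to extract the main term from $N_{n,d}^{{\bf k},{\bf e},{\bf h}}$. By Lemma \ref{generalcase2}, $N_{n,d}^{{\bf k},{\bf e},{\bf h}}$ is the sum over all partitions of $\{1,\dots,n\}$ into $\ell$ unordered pairs (plus one leftover index when $n$ is odd) of the corresponding product of pair terms, times the singleton term in the odd case. Summing over the $k_i$ and $e_i$ and applying \eqref{bigones} pair by pair, a pair $\{i,i'\}$ equipped with signs $e_i,e_{i'}$ contributes $\frac{1}{2\pi^2}\log(K|\mathcal I|)+O(1)$ when $e_ih_i+e_{i'}h_{i'}\equiv0\bmod p$ and contributes $O(1)$ otherwise, while by the first-moment computation in the proof of Theorem \ref{averageET} the leftover index contributes, once its own sign is summed, the constant $C(K)=C+O(1/K)$ of Remark \ref{Chantal's favorite nonconstant}. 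Therefore only the pairings all of whose pairs satisfy the congruence reach order $\log^{\ell}(K|\mathcal I|)$, and the moment equals
\[
\delta_n(C)\,\frac{\log^{\ell}(K|\mathcal I|)}{(2\pi^2)^{\ell}}\,\#\{\text{admissible }(\text{pairing},\,(e_1,\dots,e_n))\}+O(\log^{\ell-1}(K|\mathcal I|)).
\]

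It then remains to identify the number of admissible configurations with $\Delta(h_1,\dots,h_n)$. Here one encodes a pairing of $\{1,\dots,n\}$ together with an ordering of its $\ell$ blocks, an ordering of the two entries inside each block, and (when $n$ is odd) the position of the leftover index, as a permutation $\sigma\in\mathbb S_n$; under this dictionary the congruences $e_ih_i+e_{i'}h_{i'}\equiv0\bmod p$ attached to the pairs become precisely the conditions $e_1h_{\sigma(1)}+e_2h_{\sigma(2)}\equiv\dots\equiv e_{2\ell-1}h_{\sigma(2\ell-1)}+e_{2\ell}h_{\sigma(2\ell)}\equiv0\bmod p$ that define $\Delta$, and summing over the sign of the leftover index accounts for its free factor. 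I expect the main obstacle to be the uniform bookkeeping underlying the two reductions above — checking once and for all that no term of $E_{n,d}^{{\bf k},{\bf e},{\bf h}}$, and no pairing inside $N_{n,d}^{{\bf k},{\bf e},{\bf h}}$ containing a ``bad'' pair, can reach the top order $\log^{\ell}(K|\mathcal I|)$ — together with keeping the combinatorial count in the last step exact; the analytic inputs themselves (\eqref{bigones}, Lemma \ref{rbig}, Propositions \ref{propFR} and \ref{propmanysums}) are already in place.
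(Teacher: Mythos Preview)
Your proposal is correct and follows essentially the same route as the paper's own proof: expand the moment in terms of $M_{n,d}^{{\bf k},{\bf e},{\bf h}}$, invoke Lemma \ref{generalcase2} to split off $N_{n,d}^{{\bf k},{\bf e},{\bf h}}$ from the error $E_{n,d}^{{\bf k},{\bf e},{\bf h}}$, use Lemma \ref{rbig} (together with \eqref{bigones} and Proposition \ref{propmanysums}) to see that every $E$-term and every ``bad'' pairing falls at least one power of $\log(K|\mathcal I|)$ below the top, and then read off the coefficient combinatorially. The paper's argument is the same sequence of references compressed into a few lines; your write-up simply makes explicit the counting observation that any partition of $\{1,\dots,n\}$ which is not exactly $\ell$ pairs (plus one singleton when $n$ is odd) contains at most $\ell-1$ pair blocks and hence contributes $O(\log^{\ell-1}(K|\mathcal I|))$.
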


\begin{proof}
 By Lemmas  \ref{generalcase2} and \ref{rbig}, we observe that the leading term in  the product $S^\pm(K, f, \psi^{h_1})\dots S^\pm(K, f, \psi^{h_n})$
will come from the contributions $N_{n, d}^{{\bf k}, {\bf e}, {\bf h}}$. By Theorem \ref{covariance}, if $n=2\ell$, the leading terms are of the form
$$\left( \frac{1}{2\pi^2} \log{\left( K|\mathcal{I}| \right)} \right)^\ell$$
and if $n=2\ell+1$, the leading terms are of the
form
$$
C \left( \frac{1}{2\pi^2} \log{\left(K |\mathcal{I}|\right)} \right)^\ell.
$$

The final coefficient is obtained by counting the numbers of ways to choose the $\ell$ (or $\ell+1$) coefficients $k_i$ with positive sign ($e_i=1$) and to pair them with $k_j$ with negative sign ($e_j=-1$) in such a way that
$p$ divides $e_ih_i+e_jh_j$. 

\end{proof}

We note that if $n=2\ell$,
 \begin{equation}\label{combinatorics}
 \sum_{h_1, \dots, h_n=1}^{(p-1)/2} \Delta(h_1,\dots,h_n)=\frac{(p-1)^\ell (2 \ell)!}{2^\ell\ell! }.\end{equation}
There are $\frac{(2\ell)!}{\ell!2^\ell}$ ways of choosing  pairs $\{e_i,e_j\}$ (because the order does not count inside the pair). For each pair either $e_i$ or $e_j$ can be negative and the other one positive so there are a total $2^\ell$ choices for the signs. Finally, for each pair there are $((p-1)/2)$ possible values for  $h_i$ and this determines $h_j$.

\begin{rem} A consequence of Theorem \ref{thm:genmoments} is that the moments are given by sums of products of covariances, exactly in the same way as the moments of a multivariate normal distribution. Moreover, the generating function of the moments converges due to \eqref{combinatorics}. Therefore, our random variables are jointly normal. Since the variables are uncorrelated (cf.~Theorem \ref{covariance}), it follows that our random variables are independent.
\end{rem}

Recall that
\[
S^\pm(K,C_f)=\sum_{j=1}^{p-1}S^{\pm}(K, f, \psi^j).
\]

\begin{thm}\label{thm:sumisgaussian}
Assume that $K=d/\log\log(d|\mathcal I|)$, $d \rightarrow \infty$ and either  $0<|\mathcal{I}|<1$ is fixed or $|\mathcal{I}| \rightarrow 0$
while $d|\mathcal{I}| \rightarrow \infty$.
Then
\[
\frac{S^\pm(K, C_f)}{\sqrt{\frac {2(p-1)}{\pi^2}\log(d|\mathcal{I}|)}}
\]
 has a standard Gaussian limiting distribution when $d \rightarrow \infty$.
\end{thm}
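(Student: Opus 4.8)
The plan is to establish the result by the method of moments, feeding the general moment formula of Theorem~\ref{thm:genmoments} into a multinomial expansion of $S^\pm(K,C_f)^n$. (An alternative, which I will not pursue in detail, is to invoke the Remark following Theorem~\ref{thm:genmoments}: the $(p-1)/2$ variables $S^\pm(K,f,\psi^j)$, $1\le j\le(p-1)/2$, are asymptotically jointly normal and mutually independent, so the fixed linear combination $S^\pm(K,C_f)$ is asymptotically normal with variance supplied by Corollary~\ref{cor:2ndmoment}.) Fix a positive integer $n$. Since $S^\pm(K,f,\psi)=S^\pm(K,f,\bar\psi)$ and $\psi^{p-j}=\overline{\psi^j}$, we may write $S^\pm(K,C_f)=2\sum_{j=1}^{(p-1)/2}S^\pm(K,f,\psi^j)$, and expanding the $n$-th power by the multinomial theorem gives
\[
\langle S^\pm(K,C_f)^n\rangle = 2^n\sum_{h_1,\dots,h_n=1}^{(p-1)/2}\left\langle S^\pm(K,f,\psi^{h_1})\cdots S^\pm(K,f,\psi^{h_n})\right\rangle,
\]
a sum of $O_n(1)$ terms. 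With $K=d/\log\log(d|\mathcal I|)$ one checks, for each fixed $n$ and $d$ large, that $\max\{1,1/|\mathcal I|\}<K<d/n$ (the left inequality because $d|\mathcal I|\to\infty$, the right because eventually $\log\log(d|\mathcal I|)>n$), so Theorem~\ref{thm:genmoments} applies to every term.

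For $n=2\ell$ even, each summand equals $\frac{\Delta(h_1,\dots,h_n)}{(2\pi^2)^\ell}\log^\ell(K|\mathcal I|)(1+O(\log^{-1}(K|\mathcal I|)))$, so summing over the $h_i$ by means of the combinatorial identity~\eqref{combinatorics} gives
\begin{align*}
\langle S^\pm(K,C_f)^{2\ell}\rangle &= 2^{2\ell}\cdot\frac{(p-1)^\ell(2\ell)!}{2^\ell\ell!}\cdot\frac{\log^\ell(K|\mathcal I|)}{(2\pi^2)^\ell}+O(\log^{\ell-1}(K|\mathcal I|)) \\
&= \frac{(2\ell)!}{2^\ell\ell!}\left(\frac{2(p-1)}{\pi^2}\log(K|\mathcal I|)\right)^\ell+O(\log^{\ell-1}(K|\mathcal I|)).
\end{align*}
Since $K=d/\log\log(d|\mathcal I|)$ gives $\log(K|\mathcal I|)=\log(d|\mathcal I|)-\log\log\log(d|\mathcal I|)=\log(d|\mathcal I|)(1+o(1))$, dividing through by $\left(\frac{2(p-1)}{\pi^2}\log(d|\mathcal I|)\right)^\ell$ leaves $\frac{(2\ell)!}{2^\ell\ell!}+o(1)$, which is precisely the $2\ell$-th moment of a standard Gaussian. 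For $n=2\ell+1$ odd the factor $\delta_n(C)=C$ appears and the same expansion yields only $\langle S^\pm(K,C_f)^{2\ell+1}\rangle=O(\log^\ell(d|\mathcal I|))$; dividing by $\left(\frac{2(p-1)}{\pi^2}\log(d|\mathcal I|)\right)^{(2\ell+1)/2}$ produces a quantity that is $O(\log^{-1/2}(d|\mathcal I|))\to0$, matching the vanishing odd moments of the Gaussian.

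Having matched every moment of the normalized variable with that of the standard normal, I would conclude by the Fr\'echet--Shohat moment-convergence theorem: the Gaussian is determined by its moments (its moment sequence satisfies Carleman's condition), so convergence of all moments forces convergence in distribution. The proof is essentially bookkeeping on top of Theorem~\ref{thm:genmoments}; the one genuinely delicate point is the calibration of the cutoff $K$. One needs a \emph{single} choice $K=K(d)$ that simultaneously lies in the admissible window $\max\{1,1/|\mathcal I|\}<K<d/n$ for \emph{every} fixed $n$ as $d\to\infty$ --- which rules out taking $K$ a fixed fraction of $d$ --- while still satisfying $\log(K|\mathcal I|)\sim\log(d|\mathcal I|)$, so that the normalizing constant in the statement may be written with $d|\mathcal I|$ rather than $K|\mathcal I|$; the choice $K=d/\log\log(d|\mathcal I|)$ does precisely this. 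One also has to check that the relative errors $O(\log^{-1}(K|\mathcal I|))$ furnished by Theorem~\ref{thm:genmoments}, once summed over the $O_n(1)$ terms of the expansion and normalized, still contribute only $o(1)$; this holds because for fixed $n$ there are boundedly many such terms.
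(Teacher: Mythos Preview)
Your proof is correct and follows essentially the same route as the paper: expand $S^\pm(K,C_f)^n$ via $S^\pm(K,C_f)=2\sum_{j=1}^{(p-1)/2}S^\pm(K,f,\psi^j)$, apply Theorem~\ref{thm:genmoments} term by term, sum using~\eqref{combinatorics}, and replace $\log(K|\mathcal I|)$ by $\log(d|\mathcal I|)$. You are slightly more explicit than the paper in checking the admissibility window for $K$ and in naming the moment-convergence theorem, but the argument is the same.
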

\begin{proof}
First we compute the moments and then we normalize them. 
Let $\ell=\lfloor \frac{n}{2}\rfloor$. We note 
that  with our choice of $K$ we have
\[
\frac{\log (K|\mathcal I|)}{ \log (d|\mathcal I |)}=1- \frac{\log\log\log(d|\mathcal I|)}{\log (d|\mathcal I |)}.\]
Therefore, we can replace $\log (K|\mathcal I|)$ by $\log (d|\mathcal I |)$ in our formulas.
 
Recall that $S^\pm(K, f, \psi^j)=S^\pm(K, f, \psi^{p-j})$, then
\begin{eqnarray*}
 S^\pm(K,C_f)^n=\left(2\sum_{j=1}^{(p-1)/2}S^\pm(K, f, \psi^j) \right)^n=2^n\sum_{j_1, \dots, j_n=1}^{(p-1)/2}S^\pm(K,f,\psi^{j_1})\dots S^\pm(K,f,\psi^{j_n}).
\end{eqnarray*}
Therefore, we can compute the moment
\begin{eqnarray*}
\left\langle S^\pm(K, C_f)^n  \right\rangle 
&=&2^n\sum_{j_1, \dots, j_n=1}^{(p-1)/2}\langle S^\pm(K,f,\psi^{j_1})\dots S^\pm(K,f,\psi^{j_n})\rangle
\end{eqnarray*}
and then by Theorem \ref{thm:genmoments} this is asymptotic to 
\begin{eqnarray*}
&&\frac{2^n\delta_{n}(C)}{(2\pi^2)^\ell}\log^\ell(d|\mathcal{I}|)\sum_{j_1, \dots, j_n=1}^{(p-1)/2} \Delta(j_1,\dots,j_n).
\end{eqnarray*}
Finally we use equation \eqref{combinatorics} to conclude that when $n=2\ell$,
\[
\left\langle S^\pm(K, C_f)^n  \right\rangle=\frac{2^n(p-1)^\ell (2 \ell)!}{2^\ell \ell! (2\pi^2)^\ell}\log^\ell(d|\mathcal{I}|)=\frac{(2\ell)!}{\ell!\pi^{2\ell}} (p-1)^\ell\log^\ell(d|\mathcal{I}|)
\]
and the variance is asymptotic to $\frac{2(p-1)}{\pi^2}\log(d|\mathcal{I}|)$. 

Hence the normalized moment converges to $0$ for $n$ odd and for $n$ even,
\[\lim_{d\rightarrow \infty} \frac{\left\langle S^\pm(K, C_f)^{2\ell} \right\rangle}{\left( \sqrt{\frac{2(p-1)}{\pi^2} \log (d|\mathcal I|)}\right)^{2\ell}}
= \frac{(2\ell)!}{\ell!2^\ell}.\]
\end{proof}

\section{Proof of main theorem}\label{proof}
We prove in this section that 
$$ \frac{N_{\mathcal{I}}(C_f)
- 2g |\mathcal{I}|}{\sqrt{(2(p-1)/\pi^2) \log(d |\mathcal{I}|)}|}$$
converges in mean square to 
$$\frac{S^{\pm}(K, C_f)}{{\sqrt{(2(p-1)/\pi^2) \log(d |\mathcal{I}|)}}}.$$
Then, using Theorem \ref{thm:sumisgaussian}, we get the result of Theorem \ref{mainthm}
since convergence in mean square implies convergence in distribution. 

\begin{lem} Assume that $K =d/\log \log(d|\mathcal{I}|)$, $d\rightarrow  \infty$ and either $0<|\mathcal{I}|<1$ is fixed or
$|\mathcal{I}|\rightarrow 0$ while $d |\mathcal{I}|\rightarrow \infty$. Then
\[\left< \left| \frac{N_\mathcal{I}(C_f) - (d-1)(p-1) |\mathcal{I}| +S^{\pm}(K,C_f)}{\sqrt{(2(p-1)/\pi^2) \log (d |\mathcal{I}|)}}\right|^2 \right>\rightarrow 0\]
\end{lem}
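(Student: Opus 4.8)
The plan is to sandwich $N_{\mathcal{I}}(C_f)-(d-1)(p-1)|\mathcal{I}|$ between $-S^{+}(K,C_f)$ and $-S^{-}(K,C_f)$, up to an error that becomes negligible after normalization, and then to control the $L^2$-norm of the resulting quantity using the second-moment estimates already established.

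First I would apply property \textbf{(b)} of the Beurling--Selberg polynomials to the eigenangles of $L(u,f,\psi^h)$ for each $h$ with $1\le h\le p-1$, and sum over $h$, to obtain
\[\sum_{h=1}^{p-1}\sum_{j=1}^{d-1} I_K^{-}(\theta_j(f,\psi^h))\;\le\; N_{\mathcal{I}}(C_f)\;\le\;\sum_{h=1}^{p-1}\sum_{j=1}^{d-1} I_K^{+}(\theta_j(f,\psi^h)).\]
Then, applying Lemma \ref{Explicit-Formula} to each inner sum together with property \textbf{(c)} (which gives $\widehat{I}_K^{\pm}(0)=|\mathcal{I}|\pm 1/(K+1)$), the two outer sums equal $(d-1)(p-1)|\mathcal{I}| - S^{\pm}(K,C_f) \pm (d-1)(p-1)/(K+1)$. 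Writing $D^{\pm}:=N_{\mathcal{I}}(C_f)-(d-1)(p-1)|\mathcal{I}|+S^{\pm}(K,C_f)$, the displays combine to give $|D^{\pm}|\le |S^{-}(K,C_f)-S^{+}(K,C_f)| + (d-1)(p-1)/(K+1)$, and hence $(D^{\pm})^2 \le 2\bigl(S^{-}(K,C_f)-S^{+}(K,C_f)\bigr)^2 + 2\bigl((d-1)(p-1)/(K+1)\bigr)^2$.

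It then remains to average over $\mathcal{F}_d'$. For the first term I would expand the square and invoke Corollary \ref{cor:2ndmoment}, which asserts $\langle S^{+}(K,C_f)^2\rangle=\langle S^{-}(K,C_f)^2\rangle=\langle S^{+}(K,C_f)S^{-}(K,C_f)\rangle=\tfrac{2(p-1)}{\pi^2}\log(K|\mathcal{I}|)+O(1)$; the $\log(K|\mathcal{I}|)$ terms cancel, leaving $\langle(S^{-}(K,C_f)-S^{+}(K,C_f))^2\rangle=O(1)$. For the second term, the choice $K=d/\log\log(d|\mathcal{I}|)$ forces $(d-1)(p-1)/(K+1)=O(\log\log(d|\mathcal{I}|))$. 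Hence $\langle (D^{\pm})^2\rangle=O\bigl((\log\log(d|\mathcal{I}|))^2\bigr)$, and dividing by $\tfrac{2(p-1)}{\pi^2}\log(d|\mathcal{I}|)$ produces a quantity that tends to $0$ as $d\to\infty$, in both the fixed and mesoscopic regimes; one need only check that $\max\{1,1/|\mathcal{I}|\}<K<d/2$, which holds for $d$ large once $d|\mathcal{I}|\to\infty$. The main (and essentially only) point of substance is the exact cancellation of the main terms in $\langle(S^{-}(K,C_f)-S^{+}(K,C_f))^2\rangle$, and this is exactly what Corollary \ref{cor:2ndmoment} supplies; the remainder is bookkeeping.
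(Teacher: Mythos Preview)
Your proposal is correct and follows essentially the same route as the paper: sandwich $N_{\mathcal I}(C_f)-(d-1)(p-1)|\mathcal I|$ between $-S^{-}(K,C_f)$ and $-S^{+}(K,C_f)$ up to the error $(d-1)(p-1)/(K+1)$, and then use Corollary~\ref{cor:2ndmoment} to show $\langle (S^{-}(K,C_f)-S^{+}(K,C_f))^2\rangle=O(1)$. Your use of $(a+b)^2\le 2a^2+2b^2$ is a mild streamlining of the paper's argument, which instead bounds $(D^\pm)^2$ by a maximum and therefore also needs the first-moment estimate $\langle S^{-}(K,C_f)-S^{+}(K,C_f)\rangle=O(1)$ from Theorem~\ref{averageET} to handle the cross term; your version bypasses that step.
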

\begin{proof}
From  equation (\ref{T-estimate}) from Section \ref{1mom}, using the Beurling-Selberg polynomials and the explicit formula (Lemma \ref{Explicit-Formula}),
we deduce that
\begin{eqnarray*} 
 \frac{-(p-1)(d-1)}{K+1} &\leq& N_\mathcal{I}(C_f) - (p-1)(d-1) |\mathcal{I}|  +S^-(K, C_f)  \\& \leq &S^-(K, C_f)
- S^+(K,C_f)   +\frac{(p-1)(d-1)}{K+1} \end{eqnarray*}
and
\begin{eqnarray*} 
 \frac{-(p-1)(d-1)}{K+1} &\leq& -N_\mathcal{I}(C_f) + (p-1)(d-1) |\mathcal{I}|  -S^+(K, C_f)   \\ &\leq& S^-(K, C_f)
- S^+(K,C_f)   + \frac{(p-1)(d-1)}{K+1}.\ \end{eqnarray*}

Using these two inequalities to bound the absolute value of the central term, we obtain
\begin{eqnarray*}
&&\left< \left ( N_\mathcal{I}(C_f) - (p-1)(d-1) |\mathcal{I}| + S^{\pm}(K, C_f) \right)^2 \right>\\
&\leq& \max \left \{ \left(\frac{(p-1)(d-1)}{K+1} \right)^2, \left< \left(S^{-}(K, C_f) - S^{+}(K,C_f) +\frac{(p-1)(d-1)}{K+1}\right)^2 \right> \right\} \\
&&\leq  \left(\frac{(p-1)(d-1)}{K+1} \right)^2 \\&+& \max \left \{ 0,  \left< \left( S^{-}(K, C_f) - S^{+}(K, C_f) \right)^2 \right> 
+ 2 \frac{(p-1)(d-1)}{K+1} \left<  S^{-}(K, C_f) - S^{+}(K, C_f) \right> \right\}.
\end{eqnarray*}

Now using the estimate in the proof of Theorem \ref{averageET}, we have that
\begin{eqnarray*}
 \left\langle S^{-}(K, C_f) - S^{+}(K, C_f) \right\rangle &=& \left< S^{-}(K, C_f)  \right> - 
\left<   S^{+}(K, C_f) \right> =O(1).
\end{eqnarray*}
For the remaining term we note that 
\begin{eqnarray*}
&&\left\langle \left(S^{-}(K, C_f) - S^{+}(K, C_f)\right)^2\right \rangle
\\
&=&\left\langle \left(S^{-}(K, C_f)\right)^2\right\rangle + \left\langle \left(S^{+}(K, C_f)\right)^2\right\rangle-2\left\langle  \sum_{j_1, j_2=1}^{p-1} S^{-}(K, f, \psi^{j_1}) S^{+}(K, f, \psi^{j_2})\right\rangle.
\end{eqnarray*}
By Corollary \ref{cor:2ndmoment}, this equals
\[\frac{4(p-1)}{\pi^2}\log(d|\mathcal I|)+O(1)-\frac{4(p-1)}{\pi^2}\log(d|\mathcal I|)+O(1)=O(1).\]
Therefore, 
\[\left< \left ( N_\mathcal{I}(C_f )- (p-1)(d-1) |\mathcal{I}| +S^{\pm}(K, C_f) \right)^2 \right>=O\left(\left(\frac{(p-1)(d-1)}{K+1}\right)^2\right)\]
and
\[\left \langle \left( \frac{N_\mathcal{I}(C_f) - (p-1)(d-1) |\mathcal{I}|+S^\pm(K, C_f)}{\sqrt{(2(p-1)/\pi^2) \log(d|\mathcal{I}|)} }\right)^2 \right \rangle \rightarrow 0\]
when $d$ tends to infinity and $K =d/\log \log(d|\mathcal{I}|)$.
\end{proof}

\begin{rem}
 Proposition \ref{prop} is proved in a similar way. For this, one uses Theorem \ref{thm:Smoments} to examine the moments of 
\[
\frac{S^\pm(K, f, \psi)+S^\pm(K, f, \bar{\psi})}{\sqrt{\frac {4}{\pi^2}\log(d|\mathcal{I}|)}}=\frac{2S^\pm(K, f, \psi)}{\sqrt{\frac {4}{\pi^2}\log(d|\mathcal{I}|)}}.
\]
\end{rem}

\section*{Acknowledgments} 
The authors would like to thank Ze\'ev Rudnick for many useful discussions
while preparing this paper. The authors are also grateful to Louis-Pierre Arguin, Andrew Granville and Rachel Pries for helpful conversations related to this work. The first, third and fifth named authors thank the Centre de Recherche Math\'ematique (CRM)  and the Mathematical Sciences Research Institute (MSRI) for their hospitality. 

This work was supported by the National Science Foundation of U.S. [DMS-1201446 to B. F.],
  the Simons Foundation [\#244988 to A. B.] the UCSD Hellman Fellows Program 
[2012-2013 Hellman Fellowship to A. B.], the Natural Sciences and Engineering Research Council
 of Canada [Discovery Grant 155635-2008 to C. D., 355412-2008 to M. L.] and the Fonds de recherche du Qu\'ebec - Nature et technologies [144987 to M. L., 166534 to C. D. and M. L.]


\begin{thebibliography}{BDFL10b}

\bibitem[BDFL10a]{bdfl3}
Alina Bucur, Chantal David, Brooke Feigon, and Matilde Lal{\'{\i}}n.
\newblock Fluctuations in the number of points on smooth plane curves over
  finite fields.
\newblock {\em J. Number Theory}, 130(11):2528--2541, 2010.

\bibitem[BDFL10b]{bdfl1}
Alina Bucur, Chantal David, Brooke Feigon, and Matilde Lal{\'{\i}}n.
\newblock Statistics for traces of cyclic trigonal curves over finite fields.
\newblock {\em Int. Math. Res. Not. IMRN}, (5):932--967, 2010.

\bibitem[BDFL11]{bdfl2}
Alina Bucur, Chantal David, Brooke Feigon, and Matilde Lal{\'{\i}}n.
\newblock Biased statistics for traces of cyclic {$p$}-fold covers over finite
  fields.
\newblock In {\em WIN--Women in Numbers: Research Directions in Number Theory},
  volume~60 of {\em Fields Institute Communications Series}, pages 121--143.
  Amer. Math. Soc., Providence, RI, 2011.

\bibitem[BK]{bk}
Alina Bucur and Kiran Kedlaya.
\newblock The probability that a complete intersection is smooth.
\newblock {\em Journal de Theorie des Nombres de Bordeaux}.
\newblock To appear, arXiv:1003.5222v2.

\bibitem[Del74]{deligne}
Pierre Deligne.
\newblock La conjecture de {W}eil. {I}.
\newblock {\em Inst. Hautes \'Etudes Sci. Publ. Math.}, (43): 273--307, 1974.

\bibitem[Del80]{deligne2}
Pierre Deligne
\newblock La conjecture de {W}eil. {II}. 
\newblock {\em Inst. Hautes \'Etudes Sci. Publ. Math.}, (52): 137--252, 1980.

\bibitem[Ent]{entin}
Alexei Entin.
\newblock On the distribution of zeroes of {A}rtin-{S}chreier {$L$}-functions.
\newblock {\em Geom. Funct. Anal.}, 22(5): 1322--1360, 2012.


\bibitem[FR10]{fr}
Dmitry Faifman and Ze{\'e}v Rudnick.
\newblock Statistics of the zeros of zeta functions in families of
  hyperelliptic curves over a finite field.
\newblock {\em Compos. Math.}, 146(1): 81--101, 2010.

\bibitem[Kat87]{Katz1}
Nicholas~M. Katz.
\newblock On the monodromy groups attached to certain families of exponential
  sums.
\newblock {\em Duke Math. J.}, 54(1):41--56, 1987.

\bibitem[Kat90]{Katz2}
Nicholas~M. Katz.
\newblock {\em Exponential sums and differential equations}, volume 124 of {\em
  Annals of Mathematics Studies}.
\newblock Princeton University Press, Princeton, NJ, 1990.

\bibitem[KR09]{kr}
P{\"a}r Kurlberg and Ze{\'e}v Rudnick.
\newblock The fluctuations in the number of points on a hyperelliptic curve
  over a finite field.
\newblock {\em J. Number Theory}, 129(3):580--587, 2009.

\bibitem[Mon94]{M}
Hugh~L. Montgomery.
\newblock {\em Ten lectures on the interface between analytic number theory and
  harmonic analysis}, volume~84 of {\em CBMS Regional Conference Series in
  Mathematics}.
\newblock Published for the Conference Board of the Mathematical Sciences,
  Washington, DC, 1994.

\bibitem[PZ11]{pz}
Rachel Pries and Hui~June Zhu.
\newblock The {$p$}-rank stratification of {A}rtin-{S}chreier curves.
\newblock {\em Ann. Inst. Fourier (Grenoble)}, 61, 2011.

\bibitem[RLW11]{rw}
Antonio Rojas-Le{\'o}n and Daqing Wan.
\newblock Improvements of the {W}eil bound for {A}rtin-{S}chreier curves.
\newblock {\em Math. Ann.}, 351(2):417--442, 2011.

\bibitem[Ros02]{rosen}
Michael Rosen.
\newblock {\em Number theory in function fields}, volume 210 of {\em Graduate
  Texts in Mathematics}.
\newblock Springer-Verlag, New York, 2002.

\bibitem[Woo]{wood}
Melanie Matchett Wood.
\newblock The distribution of the number of points on trigonal curves over
  {$\mathbb F_q$}.
\newblock {\em Int. Math. Res. Not. IMRN}, (23): 5444--5456, 2012.

\bibitem[Xio10a]{x2}
Maosheng Xiong.
\newblock The fluctuations in the number of points on a family of curves over a
  finite field.
\newblock {\em J. Th\'eor. Nombres Bordeaux}, 22(3):755--769, 2010.

\bibitem[Xio10b]{x1}
Maosheng Xiong.
\newblock Statistics of the zeros of zeta functions in a family of curves over
  a finite field.
\newblock {\em Int. Math. Res. Not. IMRN}, (18):3489--3518, 2010.

\end{thebibliography}
\end{document}